\newcommand{\grad}{\nabla}
\newcommand{\cl}[1]{\overline{#1}}
\newcommand{\norm}[2]{\left\lVert #1\right\rVert_{#2}}
\newcommand{\weaklyto}{\rightharpoonup}
\newcommand{\Id}{\mathrm{I}}
\newcommand{\cts}{\hookrightarrow}
\newcommand{\ctsCompact}{\xhookrightarrow{c}}
\newcommand{\ctsDense}{\xhookrightarrow{d}}
\newtheorem{theorem}{Theorem}[section]
\newtheorem{prop}[theorem]{Proposition}
\newtheorem{lem}[theorem]{Lemma}
\newtheorem{example}[theorem]{Example}
\newtheorem{remark}[theorem]{Remark}
\newtheorem{ass}[theorem]{Assumption}
\theoremstyle{definition}
\begin{document}
\hypersetup{
  urlcolor     = blue, 
  linkcolor    = Bittersweet, 
  citecolor   = Cerulean
}
\title{Optimal control and directional differentiability for elliptic quasi-variational inequalities\footnotetext{The authors extend their gratitude to the two referees for their careful reading and excellent comments which helped to greatly improve  some of the results and presentation. AA and MH were partially supported by the DFG through the DFG SPP 1962 Priority Programme \emph{Non-smooth and Complementarity-based
Distributed Parameter Systems: Simulation and Hierarchical Optimization} within project 10. 
MH and CNR acknowledge the support of Germany's Excellence Strategy - The Berlin Mathematics Research Center MATH+ (EXC-2046/1, project ID: 390685689) within project AA4-3.
In addition, MH acknowledges the support of SFB-TRR154 within subproject B02, and CNR was supported by NSF grant DMS-2012391.}}
\author{Amal Alphonse\thanks{Weierstrass Institute, Mohrenstrasse 39, 10117 Berlin, Germany ({\tt alphonse@wias-berlin.de})} \and Michael Hinterm\"{u}ller\thanks{Weierstrass Institute, Mohrenstrasse 39, 10117 Berlin, Germany ({\tt hintermueller@wias-berlin.de})}  \and Carlos N. Rautenberg\thanks{Department of Mathematical Sciences and the Center for Mathematics and Artificial Intelligence (CMAI), George Mason University, Fairfax, VA 22030, USA ({\tt crautenb@gmu.edu})}}

\maketitle
\begin{abstract}
We focus on elliptic quasi-variational inequalities (QVIs) of obstacle type and prove a number of results on the existence of solutions, directional differentiability and optimal control of such QVIs. We give three existence theorems based on an order approach, an iteration scheme and a sequential regularisation through partial differential equations. We show that the solution map taking the source term into the set of solutions of the QVI is directionally differentiable for general data and locally Hadamard differentiable obstacle mappings, thereby extending in particular the results of our previous work which provided the first differentiability result for QVIs in infinite dimensions. Optimal control problems with QVI constraints are also considered and we derive various forms of stationarity conditions for control problems, thus supplying among the first such results in this area.
\end{abstract}

\tableofcontents

\section{Introduction}
Quasi-variational inequalities (QVIs) are generalisations of variational inequalities (VIs) where the constraint set in which the solution is sought depends on the unknown solution itself. The very nature of the dependency of the constraint set on the solution intrinsically leads to a complicated and challenging mathematical structure since it significantly amplifies the nonlinear and nonsmooth nature of VIs. Another attribute that fundamentally distinguishes QVIs from VIs is the lack of uniqueness of solutions (in general) which then necessitates the consideration of multi-valued or set-valued solution mappings.  QVIs arise in a multitude of models describing phenomena in fields such as biology, physics, economics and social sciences amongst others. First introduced by Bensoussan and Lions \cite{BensoussanLionsArticle, Lions1973} in the study of stochastic impulse controls, specific applications involving QVIs are thermoforming processes \cite{AHR, ARJF}, the formation and growth of lakes, rivers and sandpiles \cite{PrigozhinSandpile,BarrettPrigozhinSandpile,Prigozhin1996,Prigozhin1994, MR3231973}, games in the context of generalised Nash equilibrium problems \cite{HARKER199181, Facchinei2007, Pang2005}, and magnetisation of superconductors \cite{KunzeRodrigues,BarrettPrigozhinSuperconductivity,Prigozhin,MR1765540}. See \cite{AHRTrends, Baiocchi} for additional details and references.

In this paper, we focus on elliptic QVIs of obstacle type or \emph{compliant obstacle problems}. These have the form 
\begin{equation}\label{eq:QVI}
\text{find } y \in \mathbf{K}(y) : \langle Ay-f, y-v \rangle \leq 0 \quad \forall v \in \mathbf{K}(y)\text{ where $\mathbf{K}(y):=\{v \in V : v \leq \Phi(y)\}.$}
\end{equation}
Here $f \in V^*$ is data, $\Phi\colon V \to V$ is a given \emph{obstacle map}, and $V$ is a reflexive Banach space possessing an ordering $\leq$ which is used in the definition of the constraint set (we shall be more precise below). Let us define $\mathbf{Q}$ to be the solution map associated to the QVI in \eqref{eq:QVI} so that it reads $y \in \mathbf{Q}(f)$. We develop in this paper theory addressing the matters of existence for \eqref{eq:QVI}, directional differentiability of $\mathbf{Q}$ and stationarity conditions for optimal control problems with QVI constraints of the form
\begin{equation}\label{eq:ocProblem}
\min_{\substack{u \in U_{ad}\\ y \in \mathbf Q(u)}}\frac 12\norm{y-y_d}{H}^2 + \frac{\nu}{2}\norm{u}{U}^2.
\end{equation}
Different methodologies exist for the mathematical treatment of existence for QVIs. There is an approach based on order that was pioneered by Tartar \cite{tartar1974inequations} which relies on the existence of subsolutions and supersolutions to guarantee existence of solutions (typically, one takes $0$ as a subsolution which would hold under sign conditions on the source term). In certain cases, the QVI can be expressed as a generalized equation and it therefore belongs to a more general problem class \cite{MR2571488,MR2997552, MR3238849,MR3307334,MR3299010}. In problems involving constraints on derivatives (which is not the case under consideration in this paper), special forms of regularisation of the constraint that modify the partial differential operator may be suitable, see  \cite{MR1765540, MR2861829,MR2979609,Azevedo:2014ij}. For more details, we refer the reader to the survey paper \cite{AHRTrends}. We discuss in \S \ref{sec:existence} appropriate conditions on the function spaces and the obstacle map $\Phi$ for $\mathbf{Q}(f)$ to be non-empty. One approach relies on an iteration argument where a contraction-type property of $\Phi$ is used. Another existence result is given for source terms bounded from below by using the aforementioned Birkhoff--Tartar theory, and we also study a sequential regularisation approach of the QVI by PDEs where the QVI constraint is handled by a penalty term.

Literature on the differentiability and sensitivity analysis for solution maps associated to QVIs in infinite dimensions is almost non-existent: our contributions \cite{AHR, AHRStability} appear to be the first ones that address these issues. In \cite{AHR}, we give a first directional differentiability result for the solution map taking the source term into the set of solutions for non-negative sources and directions whilst in \cite{AHRStability} we studied continuity properties related to minimal and maximal solution mappings of QVIs. In \S \ref{sec:Diff}, we derive directional differentiablity results for $\mathbf{Q}$.  We extend and improve here  our previous work \cite{AHR} which provided differentiability results for source and direction terms that are non-negative; in this paper we shall remove this restriction in our Theorem \ref{thm:dirDiff1}, which requires minimal (and locally formulated) assumptions to apply. We give a characterisations of the QVI that is satisfied by the directional derivative of $\mathbf{Q}$ as a complementarity system and in \S \ref{sec:continuityOfDirDer} we also prove a continuity result that shows that the derivative depends continuously on the direction under some assumptions.  This gives a comprehensive answer to the question of sensitivity analysis of QVIs under rather general conditions.


The scarcity of work done on the optimal control of QVIs in infinite dimensions is unsurprisingly even more pronounced; see \cite{MR2657939, AHRStability, MR1770033, MR1816158, MR3012911} for some of the very few contributions. In our work \cite{AHRStability}, in addition to stability properties we also provided results on the optimal control of minimal and maximal solutions of QVIs. While this article was under preparation, we note that \cite{MR4083198} has appeared wherein the author considers elliptic QVIs and their differential sensitivity and strong stationarity conditions for the optimal control problem but for Fr\`echet differentiable obstacle maps $\Phi$; we assume only Hadamard differentiability of $\Phi$ for the differentiability result and we furthermore provide other forms of stationarity as well as existence/approximation results. For QVIs in the finite dimensional setting, see \cite{MR2338444} and the references therein. In sharp contrast, control problems with VI constraints have attracted wide attention: see for example \cite{MR742624, MR739836, MR1477352, MR1455432, MR2869508, MR3232626, MR2822818, MR3056408, MR2954637, MR3796767, Wachsmuth} and the references therein.  We shall consider in \S \ref{sec:existenceOC} the optimal control problem \eqref{eq:ocProblem} where existence of the optimal control will be shown using a standard calculus of variations argument. Then we turn our attention to the derivation of stationarity conditions for the optimal control and state. There are a number of concepts of stationarity for these types of control problems, see \cite{MR3232626} for a discussion. We first work on obtaining  Bouligand stationarity in \S \ref{sec:BS}, then a form of weak C-stationarity in \S \ref{sec:weakStationarity}, moving on to $\mathcal{E}$-almost C-stationarity conditions 
 in \S \ref{sec:penalisationOfQVI} by approximating the QVI control-to-state map through PDEs (as done in \S \ref{sec:penalisationForExistence}) and then passing to the limit. We discuss in \S \ref{sec:epsAlmostToC} how to upgrade to C-stationarity from $\mathcal{E}$-almost C-stationarity and finally, in \S \ref{sec:ss}, we provide a strong stationarity result.

\subsection{Contributions of the paper}
We summarise the main results of this work.
\begin{itemize}
\item \textbf{QVI}: 
\begin{itemize}
\item Theorems \ref{thm:penalisedConvergence} and \ref{thm:newOne}: existence for \eqref{eq:QVI} via a penalty approach, 

\item Theorem \ref{thm:dirDiff1}: directional differentiability for QVIs for locally Hadamard maps $\Phi$ under local Lipschitz conditions,

\item Proposition \ref{lem:complementarityForQVI}: complementarity characterisations of the QVI in \eqref{eq:QVI},
\item Proposition \ref{lem:alphaUniquenessAndCty}: continuity properties 
of the QVI satisfied by directional derivative,
\item Proposition \ref{lem:complementarityCharacterisationDerivativeA}: complementarity characterisation of the QVI satisfied by the directional derivative of the solution map.
\end{itemize}
\item \textbf{Optimal control}:
\begin{itemize}
\item Theorem \ref{thm:existenceOC}: existence of optimal controls for \eqref{eq:ocProblem}.
\end{itemize}
\item\textbf{Stationarity conditions for \eqref{eq:ocProblem}}:
\begin{itemize}
\item Proposition \ref{lem:characterisationOfOC}: Bouligand stationarity,
\item Theorem \ref{thm:weakStationarity}: weak C-stationarity,
\item Theorem \ref{thm:ocPenalisation}: 
 $\mathcal{E}$-almost C-stationarity,
 \item Proposition \ref{prop:CStationarity}: C-stationarity,
\item Theorem \ref{thm:strongStationarity}: strong stationarity.
\end{itemize}
\end{itemize}

\subsection{Basic assumptions and notations}\label{sec:basicass}
We make some standing assumptions that are necessary throughout the paper, except where mentioned otherwise. 


We always work with real Banach or Hilbert spaces. Let $V$ be a Banach space and denote the standard duality pairing on $V^*\times V$ by $\langle \cdot, \cdot \rangle = \langle \cdot, \cdot \rangle_{V^*, V}$.  
Take $A\colon V \to V^*$ to be a linear operator that satisfies the following properties for all $u, v \in V$: 
\begin{align*}
\langle Au, v \rangle &\leq C_b\norm{u}{V}\norm{v}{V}\tag{boundedness},\\
\langle Au, u \rangle &\geq C_a\norm{u}{V}^2\tag{coercivity},\\
\langle Au^+, u^- \rangle &\leq 0\tag{T-monotonicity},
\end{align*}
where $C_a, C_b > 0$ are constants. We will frequently suppose that the Banach space $V$ is a \emph{vector lattice} for a partial ordering $\leq$.  This means that for all $u, v \in V$, the following holds: \vspace{-.2cm}
\begin{enumerate}
\item[(i)] $u\leq u$ (reflexivity), \vspace{-.2cm}
\item[(ii)] $u \leq v$ and $v \leq u$ implies $u=v$ (anti-symmetry), \vspace{-.2cm}
\item[(iii)] $u \leq v$ and $v \leq w$ implies $u \leq w$ (transitivity),\vspace{-.2cm}
\item[(iv)]  $u \leq v$ implies that $u+w \leq v+ w$ and $\lambda u \leq \lambda v$ for $\lambda \geq 0$, \vspace{-.2cm}
\item[(v)]there exists a greatest lower bound $\inf(u,v)$ and a least upper bound $\sup(u,v)$ belonging to $V$.
\end{enumerate}
See for example \cite{MR809372, MR1128093} or
\cite[\S 4:5]{Rodrigues} for more details.  It should be emphasised that in the context of function spaces over a bounded  Lipschitz domain $\Omega$, with $\leq$ chosen as the usual a.e. ordering, (v) allows for $V=L^p(\Omega)$ and $V=W^{1,p}(\Omega)$ for $1 \leq p < \infty$ but not $V=W^{2,p}(\Omega)$ in general.  
We write the positive cone of $V$ as
\[V_+ := \{ v \in V : v \geq 0\}\]
(this is convex but not necessarily closed). If $V$ is a Banach lattice, the projection onto $V_+$ (assuming this is well defined) of an element $v \in V$ agrees with $\sup(0,v)$, but this is not necessarily the case for a general vector lattice. Note that the dual space $V^*$ inherits an ordering: we say $f\leq g$ in $V^*$ if and only if $\langle g-f,v\rangle\geq 0$ for all $v\in V_+$.

Regarding the obstacle map, we take $\Phi\colon V \to V$ to be given.

The identity operator will be denoted by $\Id$. We denote continuous, dense, and compact embeddings of spaces by $\cts$, $\ctsDense$, and $\ctsCompact$ respectively. The notation $B_R(u)$ will be used to mean the closed ball in $V$ of radius $R$ centred at $u$.
\section{Existence for QVIs}\label{sec:existence}
We begin by discussing three existence results for the QVI in  \eqref{eq:QVI}, reproduced here:
\[ y \leq \Phi( y) : \langle A y -  f,  y - v \rangle \leq 0 \quad \forall v \in V : v \leq \Phi( y),\] involving different approaches. We start by obtaining existence through iteration by solutions of VIs. Then we consider a translation of the theory by Birkhoff--Tartar for source terms that are bounded from below and we finish by considering a sequential regularisation approach through PDEs. These existence results entail different assumptions. The third approach is useful for purposes of numerical realisation. The second approach requires $\Phi$ to be increasing and bounded below in a certain sense.

Before we proceed, let us give the following characterisation involving \eqref{eq:QVI}.
\begin{prop}\label{lem:complementarityForQVI}
The QVI in \eqref{eq:QVI} is equivalent to the complementarity system
\begin{subequations}
\begin{align}
 \xi &:=  f - A y,\\
 \xi & \geq 0, \\
\langle \xi&, \Phi( y) -  y\rangle = 0,\label{eq:subO}\\
0 &\leq \Phi( y) -  y.
\end{align}
\end{subequations}
\end{prop}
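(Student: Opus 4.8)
The plan is to prove equivalence in both directions between the QVI variational formulation and the complementarity system. The key observation is that the constraint set $\mathbf{K}(y) = \{v \in V : v \leq \Phi(y)\}$ is convex, so the variational inequality $\langle Ay - f, y - v\rangle \leq 0$ for all $v \in \mathbf{K}(y)$ is a standard obstacle-type VI once we regard $\Phi(y)$ as a fixed obstacle. Setting $\xi := f - Ay$ rewrites the inequality as $\langle \xi, y - v \rangle \geq 0$ for all $v \leq \Phi(y)$, and I will extract the three conditions ($\xi \geq 0$, the complementarity relation \eqref{eq:subO}, and feasibility $y \leq \Phi(y)$) from this together with the membership $y \in \mathbf{K}(y)$.

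\medskip

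\noindent\textbf{Forward direction (QVI $\Rightarrow$ complementarity).} Suppose $y$ solves \eqref{eq:QVI}. Feasibility $0 \leq \Phi(y) - y$ is immediate from $y \in \mathbf{K}(y)$. For the sign of $\xi$, I would test the VI with admissible perturbations of the form $v = y - w$ where $w \in V_+$: since $y \leq \Phi(y)$ implies $y - w \leq \Phi(y)$ (using that $w \geq 0$ and the lattice ordering compatibility (iv)), each such $v$ lies in $\mathbf{K}(y)$, and plugging in gives $\langle \xi, w \rangle \geq 0$ for all $w \in V_+$, which is exactly $\xi \geq 0$ in the dual ordering on $V^*$. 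For the complementarity relation, I test with the specific feasible point $v = \Phi(y) \in \mathbf{K}(y)$ to obtain $\langle \xi, y - \Phi(y)\rangle \geq 0$, i.e. $\langle \xi, \Phi(y) - y\rangle \leq 0$; the reverse inequality $\langle \xi, \Phi(y) - y\rangle \geq 0$ follows from $\xi \geq 0$ combined with $\Phi(y) - y \geq 0$, and together these force equality \eqref{eq:subO}.

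\medskip

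\noindent\textbf{Converse direction (complementarity $\Rightarrow$ QVI).} Assume the four complementarity conditions hold. Feasibility gives $y \in \mathbf{K}(y)$. For an arbitrary $v \in \mathbf{K}(y)$, I write
\begin{equation*}
\langle \xi, y - v \rangle = \langle \xi, (y - \Phi(y)) + (\Phi(y) - v) \rangle = \langle \xi, \Phi(y) - v\rangle,
\end{equation*}
where the term $\langle \xi, y - \Phi(y)\rangle$ vanishes by \eqref{eq:subO}. Since $\Phi(y) - v \geq 0$ (because $v \leq \Phi(y)$) and $\xi \geq 0$, the remaining pairing is nonnegative, so $\langle \xi, y - v\rangle \geq 0$, which upon substituting $\xi = f - Ay$ is precisely $\langle Ay - f, y - v\rangle \leq 0$. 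This recovers \eqref{eq:QVI}.

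\medskip

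\noindent\textbf{The main obstacle} I anticipate is purely at the level of the dual ordering and the admissibility of test perturbations, rather than any deep analysis: I must be careful that $v = y - w$ is genuinely admissible, which relies on property (iv) of the vector lattice ensuring that subtracting a nonnegative element preserves the constraint $v \leq \Phi(y)$, and that the definition of $\xi \geq 0$ in $V^*$ is precisely equivalence with $\langle \xi, w\rangle \geq 0$ for all $w \in V_+$ as stated in the preliminaries. No coercivity, boundedness, or $T$-monotonicity of $A$ is needed here — these only enter existence arguments — so the proof is a direct and elementary manipulation of the defining inequality using the order structure.
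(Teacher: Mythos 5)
Your proof is correct and follows essentially the same route as the paper: test with $v=\Phi(y)$ and with perturbations $v=y-\varphi$, $\varphi\geq 0$, for the forward direction, and decompose $\langle \xi, y-v\rangle$ through $\Phi(y)$ for the converse. The only cosmetic difference is that the paper obtains the second half of the orthogonality relation by testing with $v=2y-\Phi(y)$, whereas you derive it from the already-established sign conditions $\xi\geq 0$ and $\Phi(y)-y\geq 0$; both are valid.
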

\begin{proof}
The proof is standard. By definition, $ \xi$ satisfies $\langle  \xi,  y - v \rangle \geq 0$ for all feasible $v$. Setting $v=\Phi( y)$ and then $v= 2 y - \Phi( y)$, we obtain the orthogonality condition \eqref{eq:subO} for $ \xi$.  Testing with $v=y-\varphi$ for $\varphi \in V$ with $\varphi \geq 0$ gives the stated non-negativity. The reverse direction follows from writing $\langle Ay-f, y-v \rangle = \langle Ay-f, y-\Phi(y)\rangle + \langle Ay-f, \Phi(y)-v\rangle$ (where $v$ is a feasible test function) and using the second and third lines in the system.
\end{proof}
\subsection{Iteration scheme}\label{sec:existenceContraction}
We need the following assumption for this section (as an example, $V=L^2(\Omega)$ or $H^1(\Omega)$ on a bounded Lipschitz domain are valid).
%
\begin{ass}\label{ass:vectorLattice}
Let $V$ be a Hilbert space and a vector lattice with $V_+$ closed and suppose that $\Phi\colon V \to V$ is increasing.
\end{ass}
The lattice and increasing properties are necessary to apply the comparison principle for VIs \cite[\S 4:5]{Rodrigues}. This assumption also implies the following useful property (whose proof is in Appendix \ref{app:technical}), which can be thought of as a weak monotone convergence theorem (in fact, it suffices for $V$ to be a reflexive Banach space rather than Hilbert for the result).
\begin{lem}\label{lem:mct}
If $\{v_n\} \subset V$ is a bounded sequence which is either increasing or decreasing (i.e., either $v_n \leq v_{n+1}$ for all $n$, or $v_n \geq v_{n+1}$ for all $n$), then there exists a $v \in V$ such that $v_n \weaklyto v$ in $V$ (for the full sequence).
\end{lem}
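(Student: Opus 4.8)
The plan is to prove the lemma for an increasing sequence $\{v_n\}$; the decreasing case then follows by applying the result to $\{-v_n\}$ (using that $V$ is a vector lattice, so negation reverses the order). The statement we want is weak sequential convergence of the \emph{full} sequence, which is the crucial point: weak compactness alone only gives a convergent subsequence, so the monotonicity must be used to force all subsequential weak limits to coincide.

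First I would invoke reflexivity and boundedness: since $\{v_n\}$ is bounded in the reflexive Banach space $V$, by the Eberlein--Šmulian theorem there is a subsequence $v_{n_k} \weaklyto v$ in $V$ for some $v \in V$. The remaining work is to upgrade this to convergence of the whole sequence. The key structural tool is that the order is compatible with the weak topology in the following sense: the positive cone $V_+$ (or more precisely the closed cone in the reflexive case) is convex and closed, hence weakly closed by Mazur's lemma. Consequently the relation $u \leq w$, i.e. $w - u \in V_+$, is preserved under weak limits of each argument.

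The main step is then a monotonicity argument. Fix any index $m$. For all $n \geq m$ we have $v_m \leq v_n$ by monotonicity, i.e. $v_n - v_m \in V_+$. Passing to the weak limit along the subsequence $n_k \to \infty$ and using weak closedness of $V_+$ gives $v - v_m \in V_+$, i.e. $v_m \leq v$ for every $m$; thus $v$ is an upper bound for the whole sequence. Next I would show $v$ is attained as the weak limit of the full sequence. Suppose some other subsequence satisfies $v_{n_j} \weaklyto \tilde v$; by the same argument $\tilde v$ is also an upper bound of all $v_m$, and moreover for fixed $j$ and all $k$ with $n_k \geq n_j$ we get $v_{n_j} \leq v_{n_k}$, so letting $k \to \infty$ yields $v_{n_j} \leq v$, and letting $j \to \infty$ (against the limit $\tilde v$) yields $\tilde v \leq v$; symmetrically $v \leq \tilde v$, whence $v = \tilde v$ by antisymmetry. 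Since every weakly convergent subsequence of the bounded sequence $\{v_n\}$ has the same limit $v$, a standard subsequence-of-subsequence argument shows the full sequence converges weakly to $v$.

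The step I expect to be the main obstacle is establishing that the order relation is weakly closed, i.e. that $V_+$ (or its closure) is weakly closed, and reconciling this with the hypotheses: Lemma \ref{lem:mct} is stated for a reflexive Banach space, yet the cone $V_+$ is only asserted to be convex and not necessarily closed in the general vector-lattice setting of \S\ref{sec:basicass}. I would handle this by working with the closed convex cone $\cl{V_+}$ and checking that the inequalities $v_m \le v_n$ indeed pass to weak limits with values in $\cl{V_+}$; one must verify that membership in $\cl{V_+}$ still encodes the ordering used in the statement, or alternatively restrict to the Banach-lattice case where $V_+$ is genuinely closed. A secondary technical care-point is justifying the passage from "every weakly convergent subsequence has limit $v$" to weak convergence of the full sequence, which relies on the sequence being bounded together with the reflexivity-based weak sequential compactness; this is routine but should be stated cleanly.
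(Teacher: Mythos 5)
Your proof is correct, but it takes a genuinely different route from the paper's. You argue on the primal side: since $V_+$ is a closed convex cone it is weakly (sequentially) closed by Mazur's lemma, so order relations survive weak limits; then any two subsequential weak limits dominate one another and coincide by antisymmetry, and the subsequence principle finishes. The paper argues on the dual side: for each $f \in V^*_+$ the scalar sequence $\langle f, v_n \rangle$ is monotone and bounded, hence convergent to a limit $l$ independent of the subsequence, so any two subsequential weak limits $v, \hat v$ satisfy $\langle f, v \rangle = \langle f, \hat v \rangle$ for all $f \in V^*_+$; equality $v = \hat v$ then follows from the weak-$*$ density of $V^*_+ - V^*_+$ in $V^*$ (a cited result), and again the subsequence principle concludes. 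Your approach is more self-contained (only Mazur plus antisymmetry), while the paper's outsources the identification step to a density lemma for the dual cone; both ultimately rest on the same hypotheses, since that density is exactly what closedness of $V_+$ together with $V_+ \cap (-V_+) = \{0\}$ provides. Concerning the caveat you flagged: the lemma is stated under Assumption \ref{ass:vectorLattice}, which explicitly includes closedness of $V_+$, so your main-line argument applies verbatim and no fallback is needed. That is fortunate, because your proposed fallback of replacing $V_+$ by $\cl{V_+}$ is genuinely delicate: the closure of a convex cone can acquire a nontrivial lineality space, so $\cl{V_+} \cap (-\cl{V_+}) = \{0\}$ (the antisymmetry you invoke to get $v = \tilde v$) may fail even when it holds for $V_+$ itself.
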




Let $S\colon V^* \times V \to V$ be the solution mapping of the VI associated to the class of QVIs under consideration, i.e. $y=S(f,\psi)$ solves
\[y \leq \Phi(\psi) : \langle Ay-f, y- v\rangle \leq 0 \quad \forall v \in V : v \leq \Phi(\psi).\]
Take a source term $f \in V^*$ and set 
\[y_0 := A^{-1}f\] to be the solution of the unconstrained problem. The function $y_1 := S(f,y_0)$ satisfies\footnote{Heuristically, $y_0$ is considered as a solution of the VI with source $f$ and obstacle equal to $\infty$. 
} $y_1 \leq y_0$ by the comparison principle \cite[\S 4:5, Theorem 5.1]{Rodrigues}, and defining 
\[y_n := S(f, y_{n-1}),\]
we see that $y_n \leq y_{n-1}$ by repeated applications of the comparison principle. Hence $\{y_n\}$ is monotonically decreasing and each $y_n$ satisfies 
\begin{equation}\label{eq:VIforun}
y_n \in V, y_n \leq \Phi(y_{n-1}) : \langle Ay_n -f , y_n -v \rangle \leq 0 \quad \forall v \in V : v \leq \Phi(y_{n-1}).
\end{equation}
We look for a uniform bound on $\{y_n\}$. When the obstacle map is such that it always dominates some given function $v_0 \in V$, this is easy since we may test with $v=v_0$. Otherwise, we need the following.
\begin{lem}If 
\begin{equation}\label{ass:normPhi}
\norm{\Phi(v)}{V} \leq C_X\norm{v}{V} \quad \forall v \in V \text{ where $C_X < \frac{C_a}{C_b}$,}
\end{equation}
then $\{y_n\}$ is bounded in $V$.
\end{lem}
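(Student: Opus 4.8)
The plan is to test the defining variational inequality \eqref{eq:VIforun} for $y_n$ with the comparison element $v = \Phi(y_{n-1})$, which is admissible since it trivially satisfies $v \leq \Phi(y_{n-1})$ by reflexivity. Rearranging $\langle Ay_n - f, y_n - \Phi(y_{n-1})\rangle \leq 0$ yields the energy inequality $\langle Ay_n, y_n\rangle \leq \langle Ay_n, \Phi(y_{n-1})\rangle + \langle f, y_n - \Phi(y_{n-1})\rangle$. I would then estimate each piece with the structural hypotheses on $A$ and $\Phi$: coercivity on the left gives $C_a\norm{y_n}{V}^2 \leq \langle Ay_n, y_n\rangle$; boundedness of $A$ together with \eqref{ass:normPhi} gives $\langle Ay_n, \Phi(y_{n-1})\rangle \leq C_b C_X \norm{y_n}{V}\norm{y_{n-1}}{V}$; and the duality estimate together with \eqref{ass:normPhi} controls the source term by $\norm{f}{V^*}\bigl(\norm{y_n}{V} + C_X\norm{y_{n-1}}{V}\bigr)$. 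Writing $a_n := \norm{y_n}{V}$, this produces the quadratic inequality $C_a a_n^2 \leq C_b C_X\, a_n a_{n-1} + \norm{f}{V^*}\bigl(a_n + C_X a_{n-1}\bigr)$.

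The step I expect to be the main obstacle is that, because the source term $f$ carries no sign restriction here, the estimate is not a clean geometric recursion of the form $a_n \leq q\,a_{n-1} + c$: the contribution $\norm{f}{V^*} C_X a_{n-1}$ appears without a matching factor of $a_n$ to absorb it, so one cannot simply divide through and iterate. My remedy is a case distinction after dividing the quadratic inequality by $a_n$ (the case $a_n = 0$ being trivially bounded). If $a_n \geq a_{n-1}$, then $a_{n-1}/a_n \leq 1$ and $a_{n-1} \leq a_n$, so the inequality collapses to $(C_a - C_b C_X)\,a_n \leq \norm{f}{V^*}(1 + C_X)$; here is exactly where the hypothesis $C_X < C_a/C_b$ is used, since it guarantees $C_a - C_b C_X > 0$ and hence the explicit bound $a_n \leq \norm{f}{V^*}(1+C_X)/(C_a - C_b C_X)$. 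In the complementary case $a_n < a_{n-1}$ the new iterate is already controlled by the previous one.

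Combining the two cases, each step satisfies $a_n \leq \max\bigl(a_{n-1},\, \norm{f}{V^*}(1+C_X)/(C_a - C_b C_X)\bigr)$, so a straightforward induction closes the argument with the uniform bound
\[
M := \max\!\left(\norm{y_0}{V},\ \frac{\norm{f}{V^*}(1+C_X)}{C_a - C_b C_X}\right),
\]
where $a_0 = \norm{y_0}{V} = \norm{A^{-1}f}{V} \leq M$ provides the base case. This shows $\norm{y_n}{V} \leq M$ for every $n$, i.e. $\{y_n\}$ is bounded in $V$. Note that the monotone ordering $y_n \leq y_{n-1}$ is not needed for this estimate; only feasibility of $\Phi(y_{n-1})$, the three structural properties of $A$, the growth bound \eqref{ass:normPhi}, and its strict constant $C_X < C_a/C_b$ enter.
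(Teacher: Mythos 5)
Your proof is correct, but it takes a genuinely different route from the paper's. The paper tests \eqref{eq:VIforun} with $v=\Phi(y_n)$ rather than $\Phi(y_{n-1})$; this choice is admissible only because $y_n \leq y_{n-1}$ (from the comparison principle) and $\Phi$ is increasing (Assumption \ref{ass:vectorLattice}), which together give $\Phi(y_n)\leq \Phi(y_{n-1})$. The payoff is that the resulting estimate $C_a\norm{y_n}{V}^2 \leq C_bC_X\norm{y_n}{V}^2 + (1+C_X)\norm{f}{V^*}\norm{y_n}{V}$ involves only $\norm{y_n}{V}$, so the condition $C_X<C_a/C_b$ yields a uniform bound in one line with no recursion. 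You instead use the trivially admissible test function $\Phi(y_{n-1})$, which produces the coupled inequality $C_a a_n^2 \leq C_bC_X a_na_{n-1} + \norm{f}{V^*}(a_n + C_Xa_{n-1})$, and you resolve the coupling by the case distinction $a_n \geq a_{n-1}$ versus $a_n < a_{n-1}$ followed by induction; I checked the case analysis and the induction, and both are sound (note $a_0 = \norm{A^{-1}f}{V} \leq \norm{f}{V^*}/C_a$, so your constant $M$ is still controlled by the data alone). What your route buys is exactly what you observe at the end: it never invokes the lattice structure, the monotone decrease of the iterates, or the increasing property of $\Phi$, so your bound holds for any sequence generated by $y_n = S(f,y_{n-1})$ under \eqref{ass:normPhi} alone, whereas the paper's argument is tied to Assumption \ref{ass:vectorLattice}. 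What the paper's route buys is brevity and a cleaner uniform constant $(1+C_X)\norm{f}{V^*}/(C_a-C_bC_X)$ valid for every $n\geq 1$ without a $\max$ against the initial iterate — though in the paper's setting the ordering facts are available for free, since they are needed anyway to make sense of the iteration and to pass to the limit in Theorem \ref{thm:existenceUnsigned}.
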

\begin{proof}
Since $y_n \leq y_{n-1}$ and $\Phi$ is increasing, $\Phi(y_n) \leq \Phi(y_{n-1})$ and so $\Phi(y_n)$ is a valid test function in \eqref{eq:VIforun} and we obtain
\begin{align*}
C_a\norm{y_n}{V}^2 &\leq \langle Ay_n, \Phi(y_n) \rangle + \langle f, y_n - \Phi(y_n)\rangle\\
&\leq C_b\norm{y_n}{V}\norm{\Phi(y_{n})}{V} + \norm{f}{V^*}\norm{y_n - \Phi(y_{n})}{V}\\
&\leq C_bC_X\norm{y_n}{V}^2 + (1+C_X)\norm{f}{V^*}\norm{y_n}{V}.
\end{align*}
From this, we deduce that $y_n$ is bounded in $V$ under the condition on $C_X$ in \eqref{ass:normPhi}. 
\end{proof}

Now we pass to the limit and show that $\mathbf{Q}\colon V^* \rightrightarrows V$ is such that $\mathbf{Q}(f) \neq \emptyset$ under certain circumstances. 
\begin{theorem}\label{thm:existenceUnsigned}
Let Assumption \ref{ass:vectorLattice} hold and 
suppose that
\begin{align}
&\text{either there exists $v_0 \in V$ such that  $v_0 \leq \Phi(v)$ for all $v \in V$, or \eqref{ass:normPhi}},\label{eq:assH1}\\
&\text{if $\{v_n\} \subset V$ is decreasing with $v_n \weaklyto v$ in $V$ and $v \leq \Phi(v_n)$, then $v \leq \Phi(v).$}\label{ass:PhipresOrder}
\end{align}
For any $f \in V^*$, there exists a solution $y \in \mathbf{Q}(f) \cap (-\infty, A^{-1}f]$ which is the weak limit of the sequence $\{y_n\}$ defined above.
\end{theorem}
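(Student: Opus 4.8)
The plan is to exploit that $\{y_n\}$ is decreasing and (under \eqref{eq:assH1}) bounded in order to extract a weak limit via Lemma \ref{lem:mct}, and then to pass to the limit in the variational inequality \eqref{eq:VIforun} solved by each iterate. First I would settle boundedness by treating the two alternatives of \eqref{eq:assH1} separately. Under the first alternative, the function $v_0$ satisfies $v_0 \leq \Phi(y_{n-1})$ and is hence admissible in \eqref{eq:VIforun}; testing with it and invoking coercivity and boundedness of $A$ gives $C_a\norm{y_n}{V}^2 \leq C_b\norm{y_n}{V}\norm{v_0}{V} + \norm{f}{V^*}(\norm{y_n}{V}+\norm{v_0}{V})$, from which a uniform bound follows. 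Under the second alternative, boundedness is precisely the content of the preceding lemma. Either way, $\{y_n\}$ is bounded and decreasing, so Lemma \ref{lem:mct} yields $y \in V$ with $y_n \weaklyto y$ along the full sequence.

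Next I would record that $y$ lies below every iterate. For $m \geq n$, monotonicity gives $y_n - y_m \in V_+$; since $V_+$ is convex and closed by Assumption \ref{ass:vectorLattice} it is weakly closed, so letting $m \to \infty$ gives $y_n - y \in V_+$, i.e. $y \leq y_n$ for every $n$. In particular $y \leq y_0 = A^{-1}f$, which already places $y$ in the claimed order interval $(-\infty, A^{-1}f]$.

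It then remains to verify that $y$ solves the QVI. For the constraint, chaining $y \leq y_n \leq \Phi(y_{n-1})$ shows $y \leq \Phi(y_{n-1})$ for every $n$, and applying \eqref{ass:PhipresOrder} to the decreasing sequence $\{y_{n-1}\}$ with weak limit $y$ delivers $y \leq \Phi(y)$. For the inequality, I would fix any $v \in V$ with $v \leq \Phi(y)$; since $y \leq y_{n-1}$ and $\Phi$ is increasing, $\Phi(y) \leq \Phi(y_{n-1})$, so $v$ is admissible in \eqref{eq:VIforun} and $\langle Ay_n, y_n\rangle \leq \langle Ay_n, v\rangle + \langle f, y_n - v\rangle$. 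The right-hand side converges to $\langle Ay, v\rangle + \langle f, y-v\rangle$ using $y_n \weaklyto y$ and the weak-to-weak continuity of the bounded linear operator $A$ (valid since $V$ is Hilbert, hence reflexive).

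The main obstacle is the quadratic term $\langle Ay_n, y_n\rangle$, for which only a $\liminf$ estimate is available because $A$ need not be symmetric. I would dispatch it via the algebraic identity $\langle Ay_n, y_n\rangle = \langle A(y_n - y), y_n - y\rangle + \langle Ay_n, y\rangle + \langle Ay, y_n\rangle - \langle Ay, y\rangle$: coercivity makes the first term nonnegative, while the remaining three each converge to $\langle Ay, y\rangle$, so that $\liminf_n \langle Ay_n, y_n\rangle \geq \langle Ay, y\rangle$. Combining this with the convergence of the right-hand side yields $\langle Ay, y-v\rangle \leq \langle f, y-v\rangle$, that is $\langle Ay - f, y - v\rangle \leq 0$ for every admissible $v$. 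Together with $y \leq \Phi(y)$ this establishes $y \in \mathbf{Q}(f)$, completing the argument.
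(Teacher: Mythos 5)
Your proposal is correct and follows essentially the same route as the paper's proof: extract the weak limit of the decreasing bounded sequence via Lemma \ref{lem:mct}, use weak sequential closedness of $V_+$ to get $y \leq y_m$ (hence admissibility of test functions $v \leq \Phi(y)$ in each VI and the bound $y \leq A^{-1}f$), pass to the limit in \eqref{eq:VIforun}, and obtain the constraint from \eqref{ass:PhipresOrder}. The only difference is that you make explicit the weak lower semicontinuity of $v \mapsto \langle Av, v\rangle$ via the coercivity identity, a step the paper leaves implicit in ``pass to the limit.''
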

\begin{proof}
We obtain, thanks to monotonicity and Lemma \ref{lem:mct} that $y_n \weaklyto y$ in $V$ (for the full sequence) for some $y$.  Since $\{y_n\}$ is decreasing, 
$y_m - y_n \in V_+$ where $n \geq m$ for $m$ fixed. As $V_+$ is closed and convex, it is weakly sequentially closed, giving $y_m \geq y$. This implies that for arbitrary $v^* \in V$ with $v^* \leq \Phi(y)$, we have $v^* \leq \Phi(y_m)$. We take such a $v^*$ as the test function in the VI for $y_m$ and then pass to the limit to obtain that $y$ satisfies the inequality in \eqref{eq:QVI} and it remains to be seen that $y \leq \Phi(y)$. This follows from passing to the limit in $y \leq y_m \leq \Phi(y_{m-1})$ by making use of \eqref{ass:PhipresOrder}.
%
%
\end{proof}
The assumption \eqref{ass:PhipresOrder} is rather weak and it is satisfied if, for example, $\Phi\colon V \to V$ is weakly sequentially continuous.

\begin{remark}
For QVIs with more general or different types of constraints one might need to assume Mosco convergence (see \cite[\S 4:4]{Rodrigues}) properties of the underlying constraint sets. 
\end{remark}

\begin{example}
The prototypical example for $\Phi$ to have in mind is a map given by the inverse of a partial differential operator such as
\[\Phi(w) := L^{-1}w + f_0,\]
for example with $L\colon V \to V^*$ a second-order linear elliptic operator on a bounded Lipschitz domain $\Omega$ and $f_0 \in V$. The validity of elliptic regularity and continuous dependence estimates for $L$ would give compactness properties for $\Phi$ (and weak maximum principles would also yield the increasing property for $\Phi$). See \cite[\S 1.2]{AHR} for more details on this and on an application to fluid flow.
\end{example}
%

\subsection{Birkhoff--Tartar order approach}\label{sec:existenceTranslation}
In this section, we extend Birkhoff--Tartar-type existence results typically used for QVIs with non-negative source terms to QVIs with source terms that are allowed to be negative. This leads to different assumptions than those made in \S \ref{sec:existenceContraction}. The bedrock of this technique, as detailed in the introduction, is the result of Tartar \cite{tartar1974inequations} that gives existence of fixed points for increasing maps that possess subsolutions and supersolutions, see also \cite[Chapter 15, \S 15.2]{MR556865}. We need the following functional setup in this section.
\begin{ass}\label{ass:forTartar}
Let $V \ctsDense H$ be a continuous and dense embedding of Hilbert spaces and let $C\subset H$ be a closed convex cone satisfying
\begin{equation}
C=\{h \in H :  (h,g)_H \geq 0 \text{ for all } g\in C\}.\label{eq:cone}
\end{equation}
This 
induces an ordering defined by
\[h_1\leq h_2 \text{ if and only if } h_2-h_1 \in H_+.\]
Note that $H_+ \equiv C$. We write $h^+=P_{H_+}h$ to denote the orthogonal projection of $h \in H$ onto $H_+$ and define $h^-:=h^+-h$. 
We assume that $v \in V$ implies $v^+\in V$ and that there exists a $C>0$ with $\|v^+\|_V\leq C\|v\|_V$ for all $v\in V$. Finally, suppose that
\[\Phi\colon H \to V \text{ is increasing.}\]
\end{ass}
Note that $H$ is a vector lattice (induced by $C$) and that the ordering induces an ordering for $V$ in the obvious way and also an ordering for $V^*$ as elucidated in \S \ref{sec:basicass}. Also, $-h^- \in P_{-H_+}h$ because $C$ satisfies \eqref{eq:cone}.

Let us recall the Birkhoff--Tartar result (see \cite[\S 15.2, Proposition 2]{MR556865}) for increasing maps under the assumptions on the function spaces in Assumption \ref{ass:forTartar}.
\begin{theorem}[Birkhoff--Tartar]\label{thm:BirkhoffTartar} 
Suppose that $T\colon H\rightarrow H$ is an increasing map and let $\underline{h}$ be a subsolution and $\overline{h}$ be a supersolution of the map $T$, i.e.,
\begin{equation*}
\underline{h}\leq T(\underline{h}) \quad \text{ and } \quad T(\overline{h})\leq \overline{h}.
\end{equation*}
If $\underline{h}\leq  \overline{h}$, then the set of fixed points of $T$ in the interval $[\underline{h}, \overline{h}]$ is non-empty and has a minimal and a maximal element.
\end{theorem}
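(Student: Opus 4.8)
The plan is to prove this by the order-theoretic (Knaster--Tarski/Tartar) fixed point principle, which avoids assuming any continuity of $T$. I would introduce the set of subsolutions lying in the interval,
\[
E := \{ x \in H : \underline{h} \leq x \leq \overline{h}, \ x \leq T(x)\},
\]
which is nonempty since $\underline{h} \in E$. The first observation is that $E$ is upward directed: for $x_1, x_2 \in E$ the element $\sup(x_1,x_2)$ again lies in $[\underline{h}, \overline{h}]$, and since $\sup(x_1,x_2) \geq x_i$, monotonicity of $T$ gives $T(\sup(x_1,x_2)) \geq T(x_i) \geq x_i$ for $i=1,2$, hence $T(\sup(x_1,x_2)) \geq \sup(x_1,x_2)$ and $\sup(x_1,x_2) \in E$. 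Granting for the moment that $y^* := \sup E$ exists in $H$, I would conclude as follows: every $x \in E$ satisfies $x \leq T(x) \leq T(y^*)$ (using $x \leq y^*$ and monotonicity), so $T(y^*)$ is an upper bound for $E$ and therefore $y^* \leq T(y^*)$, i.e. $y^* \in E$; applying $T$ once more gives $T(y^*) \leq T(T(y^*))$, so $T(y^*) \in E$ and hence $T(y^*) \leq y^*$, yielding $y^* = T(y^*)$. Since every fixed point in $[\underline{h}, \overline{h}]$ belongs to $E$, this $y^*$ is automatically the maximal fixed point.

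The main obstacle is precisely the existence of $\sup E$, that is, the order-completeness of the interval for the directed family $E$; this is where the Hilbert and lattice structure of Assumption \ref{ass:forTartar} must be exploited. The key device is that the self-duality \eqref{eq:cone} of the cone renders the linear functional $x \mapsto (x,g)_H$ monotone for each fixed $g \in H_+$, since $x \leq y$ means $y - x \in H_+$ and hence $(y-x,g)_H \geq 0$. I would fix a strictly positive $g \in H_+$ (i.e. $h \in H_+$ and $(h,g)_H = 0$ force $h=0$, which is available here, e.g. $g$ a quasi-interior point of $H_+$ when $H$ is separable), set $m := \sup_{x \in E}(x,g)_H$ (finite because $E$ is bounded above by $\overline{h}$), and choose $x_n \in E$ with $(x_n,g)_H \to m$. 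Replacing $x_n$ by the finite suprema $\sup(x_1,\dots,x_n) \in E$ produces an increasing, order-bounded sequence, to which the increasing case of Lemma \ref{lem:mct} applies and yields $x_n \weaklyto y^*$ in $H$ for some $y^*$.

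It then remains to verify $y^* = \sup E$. Weak closedness of the closed convex cone $H_+$ shows $y^* \geq x_n$ for all $n$ and $y^* \leq \overline{h}$, while weak continuity of $(\cdot,g)_H$ gives $(y^*,g)_H = m$. For an arbitrary $x \in E$, the increasing sequence $\sup(x,x_n) \in E$ satisfies $(\sup(x,x_n),g)_H \to m$ and converges weakly to some $\tilde{x}$ with $\tilde{x} \geq y^*$, $x \leq \tilde{x}$ and $(\tilde{x},g)_H = m$; then $\tilde{x} - y^* \in H_+$ with $(\tilde{x}-y^*,g)_H = 0$, so strict positivity of $g$ forces $\tilde{x} = y^*$ and therefore $x \leq y^*$. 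Thus $y^* = \sup E$ exists, and the argument of the first paragraph delivers the maximal fixed point. Finally, the minimal fixed point is obtained by the dual construction applied to the downward-directed set $E' := \{x \in [\underline{h}, \overline{h}] : T(x) \leq x\}$, which is nonempty since $\overline{h} \in E'$, by forming $\inf E'$ through a decreasing minimizing sequence and invoking the decreasing case of Lemma \ref{lem:mct}.
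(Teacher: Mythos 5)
The paper does not prove Theorem \ref{thm:BirkhoffTartar} at all: it is recalled verbatim from \cite[\S 15.2, Proposition 2]{MR556865}, so your attempt can only be measured against the classical order-theoretic argument. Your skeleton is exactly that argument: with $E$ the set of subsolutions in $[\underline{h},\overline{h}]$, the element $y^*=\sup E$ is the greatest fixed point (your first paragraph is correct, and your use of $\sup(x_1,x_2)$ is covered by the paper's standing claim that the self-dual cone $C$ makes $H$ a vector lattice), and the dual construction yields the least one, which is even stronger than the stated existence of maximal/minimal elements.

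The genuine gap is in your construction of $\sup E$: it hinges on a strictly positive $g\in H_+$, and no such element need exist under Assumption \ref{ass:forTartar}, which does not require $H$ to be separable. Concretely, take $V=H=\ell^2(\Gamma)$ with $\Gamma$ uncountable and $C$ the coordinate cone, which is closed, convex and satisfies \eqref{eq:cone}: every $g\in H_+$ has countable support, so choosing $\gamma_0$ outside that support gives $e_{\gamma_0}\in H_+\setminus\{0\}$ with $(e_{\gamma_0},g)_H=0$. Hence your proof, as written, establishes the theorem only for separable $H$ (where the quasi-interior point you invoke does exist). A second, smaller omission: Lemma \ref{lem:mct} requires \emph{norm}-bounded sequences, whereas yours are only order-bounded; this is repairable precisely because \eqref{eq:cone} makes the cone normal, since $0\leq x\leq y$ gives $\norm{x}{H}^2\leq (x,y)_H\leq \norm{x}{H}\norm{y}{H}$, so order intervals are norm-bounded --- but this must be said, as order intervals in a general ordered Banach space need not be norm-bounded. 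Both defects disappear if you drop the strictly positive functional and instead use the upward-directed set $E$ itself as a net: for each $f\in H_+$ the net $(x,f)_H$, $x\in E$, is increasing and bounded, hence convergent; since $H_+-H_+$ is dense in $H$ (another consequence of \eqref{eq:cone}), these limits define a bounded linear functional, and the net converges weakly to an element which one checks is $\sup E$. This net (equivalently, Zorn's lemma) version is essentially the proof in the cited reference and covers the nonseparable case.
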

With this at hand, we can study existence for \eqref{eq:QVI}.
\begin{theorem}\label{lem:existenceTransformedQVI}
Let 
 Assumption \ref{ass:forTartar} hold and suppose that
 \begin{equation}\label{ass:phiTranslationAtR}
\text{there exists $v_0 \in V$ such that $v_0 \leq \Phi(v_0).$}
\end{equation} Given $f \in V^*$ with $Av_0 \leq f \leq F$ for some $F \in V^*$, there exist solutions $y \in \mathbf{Q}(f)\cap[v_0, A^{-1}F]$. Furthermore, there exists a minimal and a maximal solution on this interval.
\end{theorem}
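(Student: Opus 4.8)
The plan is to reduce the QVI to a fixed-point problem for an increasing map on the Hilbert space $H$ and then invoke the Birkhoff--Tartar theorem (Theorem \ref{thm:BirkhoffTartar}). Define the map $T\colon H \to H$ by $T(\psi) := S(f, \psi)$, the solution of the VI with obstacle $\Phi(\psi)$; a fixed point of $T$ is precisely a solution of the QVI in \eqref{eq:QVI}. First I would verify that $T$ is \emph{increasing}: if $\psi_1 \leq \psi_2$, then since $\Phi$ is increasing we have $\Phi(\psi_1) \leq \Phi(\psi_2)$, and the comparison principle for VIs (\cite[\S 4:5, Theorem 5.1]{Rodrigues}, as used in \S\ref{sec:existenceContraction}) yields $S(f,\psi_1) \leq S(f,\psi_2)$, i.e. $T(\psi_1) \leq T(\psi_2)$.

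Next I would exhibit a subsolution and a supersolution lying in the order interval $[v_0, A^{-1}F]$. For the \textbf{subsolution}, take $\underline{h} := v_0$. Using \eqref{ass:phiTranslationAtR}, namely $v_0 \leq \Phi(v_0)$, the element $v_0$ is feasible for the VI defining $T(v_0)$, and since $Av_0 \leq f$ one expects $v_0 \leq S(f,v_0) = T(v_0)$: heuristically $v_0$ behaves like a subsolution of the VI because the source $f$ dominates $Av_0$. This can be made rigorous by testing the VI for $T(v_0)$ against $v = \inf(T(v_0), v_0)$ (which is feasible by the lattice structure and $v_0 \leq \Phi(v_0)$) and exploiting T-monotonicity of $A$ together with $\langle Av_0 - f, \cdot\rangle \leq 0$ on the positive cone. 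For the \textbf{supersolution}, take $\overline{h} := A^{-1}F$, the solution of the unconstrained equation $A\overline{h} = F$. Since $f \leq F$, comparison of the constrained VI $T(\overline h)$ with the unconstrained problem (as in the construction $y_1 \leq y_0$ in \S\ref{sec:existenceContraction}) gives $T(\overline{h}) \leq A^{-1}f \leq A^{-1}F = \overline{h}$, using that $A^{-1}$ is order-preserving, which follows from coercivity and T-monotonicity of $A$. Finally $\underline{h} = v_0 \leq A^{-1}F = \overline{h}$ because $Av_0 \leq f \leq F$ and $A^{-1}$ is increasing.

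With $T$ increasing and $\underline{h} \leq \overline{h}$ a sub/supersolution pair, Theorem \ref{thm:BirkhoffTartar} furnishes a minimal and a maximal fixed point of $T$ in $[\underline{h}, \overline{h}] = [v_0, A^{-1}F]$, each of which is a solution $y \in \mathbf{Q}(f)$ in the asserted interval. One mild point to address is that Birkhoff--Tartar is stated for a map $H \to H$ whereas $T$ is genuinely $V$-valued; since $V \hookrightarrow H$ densely and continuously, $T$ may be regarded as a map into $H$, and the fixed points returned automatically lie in $V$ because $S$ takes values in $V$.

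I expect the \textbf{main obstacle} to be establishing that $v_0$ is a subsolution of $T$, i.e. $v_0 \leq T(v_0)$. Unlike the supersolution estimate, which follows cleanly from comparison with the unconstrained equation, the subsolution bound must combine the feasibility $v_0 \leq \Phi(v_0)$ with the source condition $Av_0 \leq f$ in the interior of the VI, and the correct test function together with a careful use of T-monotonicity and the characterisation of the dual ordering on $V^*$ is the delicate step. Everything else is a routine application of the comparison principle and order-preservation of $A^{-1}$.
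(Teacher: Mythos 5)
Your overall strategy coincides with the paper's: both reduce the QVI to a fixed-point problem for the increasing map $T(\cdot) := S(f,\cdot)$, exhibit $v_0$ as a subsolution and $A^{-1}F$ as a supersolution with $v_0 \leq A^{-1}F$, and invoke Theorem \ref{thm:BirkhoffTartar}. Your supersolution argument and the ordering $v_0 \leq A^{-1}F$ (via order-preservation of $A^{-1}$) are sound and essentially equivalent to the paper's chain $A^{-1}F = S(F,\Phi^{-1}(\infty)) \geq S(F,A^{-1}F) \geq S(f,A^{-1}F)$.

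However, the step you yourself flag as the main obstacle --- the subsolution property $v_0 \leq T(v_0)$ --- is carried out with the wrong test function, and as written it fails. Write $y := T(v_0) = S(f,v_0)$. Testing the VI for $y$ with $v = \inf(y,v_0)$ gives $y - v = (y-v_0)^+$, hence $\langle Ay - f, (y-v_0)^+\rangle \leq 0$; this concerns the set where $y$ exceeds $v_0$, which is not what you need, and since the source condition gives $\langle f - Av_0, (y-v_0)^+\rangle \geq 0$, the two inequalities have opposite orientations and cannot be combined to produce a sign for $\langle A(v_0-y), (v_0-y)^+\rangle$. The correct test function is $v = \sup(y,v_0) = y + (v_0-y)^+$, which is feasible precisely because both $y \leq \Phi(v_0)$ and, by \eqref{ass:phiTranslationAtR}, $v_0 \leq \Phi(v_0)$; it yields $\langle Ay - f, (v_0-y)^+\rangle \geq 0$, which added to $\langle f - Av_0, (v_0-y)^+\rangle \geq 0$ gives $\langle A(v_0-y),(v_0-y)^+\rangle \leq 0$; then T-monotonicity applied to $-(v_0-y)$ (which reads $\langle Au^-,u^+\rangle \leq 0$ for $u = v_0-y$) and coercivity force $(v_0-y)^+ = 0$. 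Alternatively --- and this is how the paper's proof avoids any testing argument altogether --- note that $v_0 \leq \Phi(v_0)$ and $\langle Av_0 - Av_0, v_0 - v\rangle = 0$ for every feasible $v$ mean exactly that $v_0 = S(Av_0, v_0)$, so the comparison principle in the source term ($Av_0 \leq f$, same obstacle $\Phi(v_0)$) immediately gives $v_0 = S(Av_0,v_0) \leq S(f,v_0) = T(v_0)$; this is the same comparison principle you already invoke to show that $T$ is increasing, so no separate "delicate step" is needed.
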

\begin{proof}
By the comparison principle, $S(f,v_0) \geq S(Av_0,v_0) = v_0$, hence $v_0$ is a subsolution for $S(f, \cdot)$. Since $\Phi$ is increasing, $A^{-1}F =  S(F, \Phi^{-1}(\infty))\footnote{By $S(F, \Phi^{-1}(\infty))$ we simply mean the solution of the unconstrained problem with source $F$. No invertibility of $\Phi$ is necessary.} \geq S(F, A^{-1}F) \geq S(f,A^{-1}F)$ so that $A^{-1}F$ is a supersolution. We also have $v_0 = S(Av_0,v_0) \leq S(F, v_0) \leq S(F,\Phi^{-1}(\infty)) = A^{-1}F$, i.e., the subsolution lies below the supersolution.  Finally, $S(f,\cdot)$ is increasing due to $\Phi$ being increasing.  The result follows from the Birkhoff--Tartar theorem.
\end{proof}
A typical situation in examples is when $\Phi(0) \geq 0$ and $f \in V^*_+$. While the assumption \eqref{ass:phiTranslationAtR} of the existence of such a $v_0$ may appear to be restrictive, note that choosing $v_0 \equiv 0$ recovers the setting of \cite{AHR} which has been successfully applied to an application in thermoforming. The next example illustrates the existence of such a function $v_0$ to a map $\Phi$ related to solution maps of elliptic PDEs.
\begin{example}
Let $\Omega \subset \mathbb{R}^n$ be a bounded Lipschitz domain  and set $H:=L^2(\Omega)$. Suppose $(V,H,V^*)$ is a Gelfand triple\footnote{Recall that $V \subset H \equiv H^* \subset V^*$ is called a \emph{Gelfand triple} if $V$ is a reflexive Banach space continuously and densely embedded into the Hilbert space $H$ and $H$ has been identified with its dual through the Riesz map.} with $V$ a reflexive Banach space. Given a linear, bounded, coercive and T-monotone operator $B\colon V \to V^*$ and a source term $g \in V^*$, let $\Phi(u) = \varphi$ be defined\footnote{The interest in such obstacle mappings is not merely academic, see \cite{AHR} for some applications.} as the solution of 
\[B\varphi = g + u.\]
Take any $v_0 \in V$. We claim that if $g$ is such that
\[g \geq Bv_0-v_0,\]
in $V^*$, then \eqref{ass:phiTranslationAtR} is satisfied. To see this, set $v:=\Phi(v_0)$ so that $B                                                                                                                                                                                                                                                                                                                                                                                                                                                                                                                                                                                                                                                                                                                                                                                                                                                                                                                                                                                                                                                                                                                                                                                                                                                                                                                                                                                                                                                                                                                                                                                                                                                                                                                                                                                                                                                                                                                                                       v = g +v_0$. Adding the same term to both sides, we obtain $B(v-v_0) = g +v_0-Bv_0$. Test this with the function $(v-v_0)^-$ to obtain
\[\langle B(v-v_0)^-, (v-v_0)^-\rangle \leq -\langle g +v_0-Bv_0, (v-v_0)^-\rangle \leq 0.\]
\end{example}

\subsection{Sequential regularisation by PDEs}\label{sec:penalisationForExistence}
In this section, we obtain existence results for \eqref{eq:QVI} by regularising the QVI by PDEs by a penalty approach similar to \cite[\S 3.5.2,  p.~370]{Lions1969}. There has been considerable effort on various aspects and methods of regularisation of VIs by PDEs; see for example \cite[\S 3.2]{Glowinski} for an approach similar to what we consider here and \cite{MR0247551} and \cite[\S IV]{Kinderlehrer} for a penalisation involving approximations to the Heaviside graph (see also \cite[\S 5:3]{Rodrigues} on this).

\begin{ass}\label{ass:seqReg}
Let $V$ be a 
reflexive Banach space and a vector lattice such that $V_+$ is closed. 
\end{ass}

Recall that an operator $T\colon X \to X^*$ is \emph{hemicontinuous} \cite[Definition 2.3]{Roubicek} if $s \mapsto \langle T(x+sy), z \rangle_{X^*,X}$ is continuous for all $x, y, z \in X$. For each $\rho > 0,$ let $m_\rho\colon V \to V^*$ be a hemicontinuous map  such that
\begin{align}
&m_\rho(v) = 0 \text{ if $v \leq 0$}\label{ass:mrPenal}\\
&\langle m_\rho(u)-m_\rho(v), u-v\rangle \geq 0\label{ass:mrMonotonicity}\\
&z_\rho \weaklyto z \text{ in $V$ and } m_\rho(z_\rho) \to 0 \text{ in $V^*$ (as $\rho \to 0$)} \implies z \leq 0\label{ass:mrForFeas}
\end{align}
\begin{remark}
The last condition precludes the possibility of having `bad' choices of $m_\rho$ such as $\rho(\cdot)^+$. It is also worth pointing out that if $m_\rho \equiv m$ for some map $m$, then \eqref{ass:mrForFeas} implies that 
\[m(z) = 0 \implies z \leq 0,\] which is the converse of \eqref{ass:mrPenal}, so \eqref{ass:mrForFeas} can be thought of a strengthening of the classical kernel or penalty condition that one finds in penalty approaches for VIs.
\end{remark}
It is always possible to find such a sequence of maps $\{m_\rho\}$, see the next example as well as Example \ref{eg:gelfandTripleEg} for the Gelfand triple case. For this reason, we will not usually explicitly refer to \eqref{ass:mrPenal}--\eqref{ass:mrForFeas} in statements of theorems.
\begin{example}[Existence of $m_\rho$]\label{eg:generalCase}
Let $V$ and $V^*$ be strictly convex\footnote{All Hilbert spaces (and thus their duals) are strictly convex. In fact, the strict convexity requirement in the assumption is no issue in the setting of reflexive Banach spaces: by Asplund's theorem (see e.g., \cite[\S 2.2.2, Theorem 2.5]{Lions1969}), $V$ can be renormed via an equivalent norm making $V$ and $V^*$ strictly convex.}.  Indeed, with $\mathcal J\colon V \to V^*$ denoting the duality mapping\footnote{The assumption of strict convexity gives appropriate properties of $\mathcal J$ (such as single-valuedness), see \cite[\S 2.2.2, p.~174]{Lions1969} and \cite[\S 32.3d]{Zeidler2B} for more details.} the choice 
\[m_\rho(u) := \mathcal J(u-P_{V_-}(u))\] 
furnishes such an example where $P_{V_-}\colon V \to V_-$ is the metric projection\footnote{This is well defined since we assumed $V_+$ (and hence $V_-$) is closed and because $V$ is a reflexive and strictly convex space.} onto the set of non-positive elements $V_-$. 
Properties \eqref{ass:mrPenal} and \eqref{ass:mrMonotonicity} as well as hemicontinuity follow as in \cite[\S 3.5.2, Theorem 5.1, p.~370]{Lions1969}. In fact, note that $m_\rho(u) =0$ implies that $u \leq 0$ (because $\mathcal J$ is bijective and passes through the origin \cite[Proposition 32.22 (a), (b)]{Zeidler2B}).  For \eqref{ass:mrForFeas}, denoting $m_\rho \equiv m$, by monotonicity, we have for every $\lambda >0$ and $v \in V$ that
\begin{align*}
\langle m(z_\rho) - m(z+\lambda v), z_\rho - z - \lambda v \rangle \geq 0
\end{align*}
whence passing to the limit $\rho \to 0$, using $m(z_\rho) \to 0$ in $V^*$ (by hypothesis),
$\langle m(z+\lambda v),  \lambda v \rangle \geq 0$
and then dividing through by $\lambda$ and sending $\lambda \to 0$, by hemicontinuity, we obtain that  $m(z) = 0$ in $V^*$ and thus $z \leq 0$.
\end{example}


We consider the penalisation\footnote{For the results of this section, it would be sufficient to simply consider the case where each $m_\rho \equiv m$, but in anticipation of the optimal control problem that we shall later study (in particular when we derive optimality conditions), it becomes useful to consider this generality now.}
\begin{equation}\label{eq:penalised}
Ay_\rho + \frac{1}{\rho}m_\rho(y_\rho-\Phi(y_\rho)) = f
\end{equation}
of  \eqref{eq:QVI} and study the convergence properties of its solution as $\rho \to 0$. First, we discuss existence. We recall that a map $T\colon X \to X^*$ is said to be \emph{radially continuous} \cite[Definition 2.3]{Roubicek} if $s \mapsto \langle T(x+sy), y \rangle_{X^*,X}$ is continuous for all $x, y \in X$, and a map $R\colon X \to Y$ between Banach spaces is said  to be \emph{completely continuous} \cite[\S 2]{Showalter}  if $x_n \weaklyto x$ in $X$ implies that $R(x_n) \to R(x)$ in $Y$.
\begin{prop}[Existence for the penalised equation]\label{lem:existencePenalisedPDE}Under Assumption \ref{ass:seqReg}, assume
\begin{align}
&\text{there exists $v_0 \in V$ such that $v_0 \leq \Phi(v)$ for all $v \in V$}\label{ass:feasiblePoint}
\end{align}
and one of the following:
\begin{subequations}
\begin{align}
&m_\rho(\Id-\Phi)\colon V \to V^* \text{ is completely continuous,}\label{ass:mrCC}\\
&m_\rho(\Id-\Phi)\colon V \to V^* \text{ is monotone, radially continuous and bounded}\label{ass:mrMonotoneBdded}.
\end{align}
\end{subequations}
Given $f \in V^*$, there exists a solution $y_\rho \in V$ of \eqref{eq:penalised}. Furthermore, every solution satisfies
\begin{equation*}
\norm{y_\rho}{V} \leq C\left(\norm{f}{V^*} + \norm{v_0}{V}\right),
\end{equation*}
where $C$ is independent of $\rho$. 
%
\end{prop}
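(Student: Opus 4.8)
The plan is to recast \eqref{eq:penalised} as the operator equation $T_\rho(y_\rho)=f$ for $T_\rho:=A+\tfrac1\rho m_\rho(\Id-\Phi)\colon V\to V^*$, and to obtain both the uniform bound and existence from a single sign estimate on the penalty term, namely that for every $v\in V$,
\[\langle m_\rho(v-\Phi(v)),\,v-v_0\rangle\geq 0.\]
To prove this I would apply the monotonicity \eqref{ass:mrMonotonicity} to the pair $(v-\Phi(v),\,v_0-\Phi(v))$, whose difference is exactly $v-v_0$, and then note that $v_0-\Phi(v)\leq 0$ by the feasibility hypothesis \eqref{ass:feasiblePoint}, so that $m_\rho(v_0-\Phi(v))=0$ by \eqref{ass:mrPenal}. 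This combination of \eqref{ass:mrPenal}, \eqref{ass:mrMonotonicity} and \eqref{ass:feasiblePoint} is the crux of the whole argument and the only genuinely delicate point; once it is in place, everything reduces to standard surjectivity theory for monotone/pseudomonotone operators.

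For the a priori bound I would test the equation with $y_\rho-v_0$. Since $\tfrac1\rho>0$, the above estimate shows the penalty contribution has the correct sign and may be dropped, leaving $\langle Ay_\rho,y_\rho-v_0\rangle\leq\langle f,y_\rho-v_0\rangle$. Expanding, and using coercivity $\langle Ay_\rho,y_\rho\rangle\geq C_a\norm{y_\rho}{V}^2$, boundedness $\langle Ay_\rho,v_0\rangle\leq C_b\norm{y_\rho}{V}\norm{v_0}{V}$, and the duality estimates on the right-hand side, one arrives at a quadratic inequality of the form $C_a\norm{y_\rho}{V}^2\leq C_b\norm{y_\rho}{V}\norm{v_0}{V}+\norm{f}{V^*}(\norm{y_\rho}{V}+\norm{v_0}{V})$. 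Absorbing the linear terms via Young's inequality yields $\norm{y_\rho}{V}\leq C(\norm{f}{V^*}+\norm{v_0}{V})$ with $C$ depending only on $C_a,C_b$ and hence independent of $\rho$, as required.

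For existence I would verify that $T_\rho$ is bounded, pseudomonotone and coercive, and then invoke the main surjectivity theorem for such operators (e.g.\ \cite{Roubicek,Zeidler2B}). \emph{Boundedness}: $A$ is bounded; in case \eqref{ass:mrMonotoneBdded} the penalty is bounded by hypothesis, while in case \eqref{ass:mrCC} complete continuity on the reflexive space $V$ forces boundedness on bounded sets (a weakly convergent subsequence argument). \emph{Pseudomonotonicity}: $A$ is linear, monotone and continuous, hence pseudomonotone; in case \eqref{ass:mrMonotoneBdded} the penalty is monotone, radially continuous and bounded, hence pseudomonotone, whereas in case \eqref{ass:mrCC} it is completely continuous, hence pseudomonotone; and pseudomonotonicity is stable under addition. \emph{Coercivity}: the same sign estimate gives $\langle T_\rho v,v-v_0\rangle\geq C_a\norm{v}{V}^2-C_b\norm{v}{V}\norm{v_0}{V}$, so $\langle T_\rho v,v-v_0\rangle/\norm{v}{V}\to\infty$; equivalently, after the substitution $v=w+v_0$ the shifted operator $w\mapsto T_\rho(w+v_0)$ is coercive in the usual sense. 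The surjectivity theorem then delivers a solution $y_\rho$ of $T_\rho(y_\rho)=f$, completing the proof. The main obstacle is the initial sign estimate; the remaining verifications are routine, with the only care needed being to state coercivity relative to $v_0$ (or to pass to the shifted operator) rather than in the naive form, since the penalty term need not vanish at $v_0$.
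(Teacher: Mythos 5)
Your proposal is correct, and its backbone coincides with the paper's proof: the key sign estimate $\langle m_\rho(v-\Phi(v)),\,v-v_0\rangle\geq 0$ is obtained exactly as in the paper (monotonicity \eqref{ass:mrMonotonicity} applied to the pair $v-\Phi(v)$, $v_0-\Phi(v)$, together with $m_\rho(v_0-\Phi(v))=0$ from \eqref{ass:mrPenal} and the feasibility \eqref{ass:feasiblePoint}), and the a priori bound is derived the same way, by testing with $y_\rho-v_0$, dropping the penalty term by its sign, and absorbing via Young's inequality. The one place you diverge is the existence step under \eqref{ass:mrCC}: the paper stays within type M theory --- $A$ is type M, the sum of a type M operator and a completely continuous operator is again type M, and existence follows from the corresponding surjectivity result in Showalter --- whereas you subsume both hypotheses under pseudomonotone surjectivity (completely continuous $\Rightarrow$ pseudomonotone, monotone $+$ radially continuous $+$ bounded $\Rightarrow$ pseudomonotone, and pseudomonotonicity is preserved under sums). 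Both routes are standard and sound; yours buys a single surjectivity theorem covering both cases, at the modest cost of verifying that a completely continuous map on a reflexive space is bounded and pseudomonotone, which you do. You are also right to flag that coercivity must be stated relative to $v_0$ (or via the shifted operator $w\mapsto T_\rho(w+v_0)$), since the penalty term need not vanish at $v_0$; this is the same $v_0$-relative coercivity the paper uses, stated there without comment.
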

\begin{proof}
We have that $A+(1\slash \rho)m_\rho(\mathrm{I}-\Phi)$ is a bounded operator (under \eqref{ass:mrCC}, recall that completely continuous maps are bounded). Let us show that it is also coercive.  First, by adding and subtracting the same term, observe the formula
\begin{align*}
\nonumber \langle m_\rho(y_\rho-\Phi(y_\rho)), y_\rho-  v_0\rangle &= \langle m_\rho(y_\rho-\Phi(y_\rho)) - m_\rho(v_0-\Phi(y_\rho)), y_\rho-v_0\rangle\\
&\geq 0
\end{align*}
(by monotonicity \eqref{ass:mrMonotonicity} and because $m_\rho \equiv 0$ on $(-\infty,0]$ from \eqref{ass:mrPenal}). Now, using this, we have
\begin{align*}
\langle Ay_\rho, y_\rho -v_0 \rangle + \langle m_\rho(y_\rho - \Phi(y_\rho)), y_\rho - v_0 \rangle \geq C_a\norm{y_\rho}{V}^2 - C_b\norm{y_\rho}{V}\norm{v_0}{V},
\end{align*}
which yields coercivity of the full elliptic operator. 

Suppose that \eqref{ass:mrCC} is available. By \cite[\S 2, Lemma 2.1]{Showalter}, $A$ is a type M operator. Since the sum of a type M operator and a completely continuous operator is type M \cite[\S 2, Example 2.B]{Showalter},  
we get that the full elliptic operator is of type M. Then \cite[\S 2, Corollary 2.2]{Showalter} yields existence.  Under \eqref{ass:mrMonotoneBdded}, the full elliptic operator is pseudomonotone by \cite[Lemma 2.9 and Lemma 2.11]{Roubicek} giving existence via \cite[Theorem 2.6]{Roubicek}.

Regarding the estimate on the solution, we test the equation with $y_\rho-v_0$ and use the above coercivity estimate to find 
\begin{align*}
C_a\norm{y_\rho}{V}^2 
&\leq C_b\norm{y_\rho}{V}\norm{v_0}{V} + \norm{f}{V^*}\norm{y_\rho}{V} + \norm{f}{V^*}\norm{v_0}{V}\\
&\leq \frac{C_a}{3}\norm{y_\rho}{V}^2 + \frac{3C_b^2}{4C_a}\norm{v_0}{V}^2 + \frac{3}{4C_a}\norm{f}{V^*}^2 + \frac{C_a}{3}\norm{y_\rho}{V}^2 + \frac 12 \norm{f}{V^*}^2  + \frac 12 \norm{v_0}{V}^2.
\end{align*}
This gives the uniform bound
\begin{align*}
\frac{C_a}{3}\norm{y_\rho}{V}^2 &\leq \left(\frac{3C_b^2}{4C_a} + \frac 12\right)\norm{v_0}{V}^2 + \left(\frac{3}{4C_a}+ \frac 12\right)\norm{f}{V^*}^2.\qedhere
\end{align*}
\end{proof}
\begin{remark}
The assumptions of the previous lemma are by no means necessary. One could, for example, ask for
$(\Id-\Phi)\colon V \to V \text{ to be invertible and } A(\Id-\Phi)^{-1}\colon V \to V^* \text{ to be pseudomonotone and coercive}$ instead of \eqref{ass:mrCC} or \eqref{ass:mrMonotoneBdded} and then apply \cite[Theorem 2.6]{Roubicek} to obtain the same result.
\end{remark}
Let us point out a very common setting.
\begin{example}[Gelfand triple case]\label{eg:gelfandTripleEg}

Suppose that
\begin{align}
	 &\text{$V \subset H \equiv H^* \subset V^*$ is a Gelfand triple with $V \ctsCompact H$ and $H$ is a vector lattice defined via \eqref{eq:cone} with $H_+$ closed},\label{ass:gf1}\\
	 &\Phi\colon V \to H \text{ is completely continuous}.\label{ass:PhiVtoHCC}
\end{align}
Set $h^+=P_{H_+}h$ to be the orthogonal projection in $H$. 
We assume that $(\cdot)^+ \colon V \to V$. 


We can take $m_\rho\colon V \to H^* \equiv H$ defined by
\[m_\rho(v) := (v^+, \cdot)_H\]
and this satisfies \eqref{ass:mrPenal}, \eqref{ass:mrMonotonicity}, \eqref{ass:mrForFeas} and \eqref{ass:mrCC}. 
Indeed, \eqref{ass:mrForFeas} follows because $P_{H_+}\colon H \to H_+$ is Lipschitz continuous and the compact embedding and complete continuity imply \eqref{ass:mrCC} (using the fact that the projection operator is continuous in $H$).
\end{example}

We write the possibly multivalued solution mapping associated to the equation under study as $\mathbf P_\rho\colon V^* \rightrightarrows V$, so \eqref{eq:penalised} reads $y_\rho \in \mathbf P_\rho(f)$.
Now, thanks to the lemma, for every source term $f_\rho \in V^*$, the following equation has a solution $y_\rho$:
\begin{equation}\label{eq:penalisedGeneral}
Ay_\rho + \frac 1\rho m_\rho(y_\rho - \Phi(y_\rho)) = f_\rho.
\end{equation}
 The next two theorems show that solutions of the regularised problem \eqref{eq:penalised} converge to  solutions of the QVI under varying assumptions. 
\begin{theorem}[Existence and approximation of solutions to the QVI]\label{thm:penalisedConvergence}Let Assumption \ref{ass:seqReg}, \eqref{ass:feasiblePoint}, either \eqref{ass:mrCC} or \eqref{ass:mrMonotoneBdded} and
\begin{equation}
\text{$\Phi\colon V \to V$ is completely continuous}\label{ass:PhiCC}
\end{equation}
hold. Take a sequence $f_\rho \to f$ in $V^*$. Then there exists a subsequence $\{\rho_n\}_n$ and elements $y_{\rho_n} \in \mathbf P_{\rho_n}(f_{\rho_n})$ such that $y_{\rho_n} \to y$ in $V$ where $y \in \mathbf{Q}(f)$. 
\end{theorem}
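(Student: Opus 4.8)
Proposition~\ref{lem:existencePenalisedPDE} supplies the engine, so the plan is a standard penalisation-passage-to-the-limit argument, the only nonstandard ingredient being the manufacture of admissible test functions for the penalised equations out of admissible test functions for the QVI by exploiting the complete continuity of $\Phi$. First I would note that since $f_\rho \to f$ in $V^*$ the family $\{f_\rho\}$ is bounded, so Proposition~\ref{lem:existencePenalisedPDE} yields a bound $\norm{y_\rho}{V}\leq C$ independent of $\rho$; by reflexivity there is a subsequence with $y_{\rho_n}\weaklyto y$ in $V$. Assumption~\eqref{ass:PhiCC} then upgrades this to $\Phi(y_{\rho_n})\to\Phi(y)$ strongly in $V$. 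Rearranging \eqref{eq:penalisedGeneral} gives $m_{\rho_n}(y_{\rho_n}-\Phi(y_{\rho_n})) = \rho_n(f_{\rho_n}-Ay_{\rho_n})$, and as the factor $f_{\rho_n}-Ay_{\rho_n}$ is bounded in $V^*$ (boundedness of $A$ together with the bounds on $\{y_{\rho_n}\}$ and $\{f_{\rho_n}\}$), the penalty term converges to $0$ in $V^*$.

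Next I would establish feasibility. Writing $z_{\rho_n}:=y_{\rho_n}-\Phi(y_{\rho_n})$, the weak convergence of $y_{\rho_n}$ and the strong convergence of $\Phi(y_{\rho_n})$ combine to give $z_{\rho_n}\weaklyto y-\Phi(y)$ in $V$, while we have just seen $m_{\rho_n}(z_{\rho_n})\to 0$ in $V^*$. Assumption~\eqref{ass:mrForFeas} then delivers $y-\Phi(y)\leq 0$, so $y$ is feasible for the QVI.

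For the variational inequality and the claimed strong convergence, take any admissible $v\leq\Phi(y)$ and set $v_{\rho_n}:=v+\Phi(y_{\rho_n})-\Phi(y)$. Then $v_{\rho_n}-\Phi(y_{\rho_n})=v-\Phi(y)\leq 0$, so $v_{\rho_n}$ is admissible for the penalised obstacle and $m_{\rho_n}(v_{\rho_n}-\Phi(y_{\rho_n}))=0$ by \eqref{ass:mrPenal}, while $v_{\rho_n}\to v$ strongly. Testing \eqref{eq:penalisedGeneral} with $y_{\rho_n}-v_{\rho_n}$ and writing $y_{\rho_n}-v_{\rho_n}=(y_{\rho_n}-\Phi(y_{\rho_n}))-(v_{\rho_n}-\Phi(y_{\rho_n}))$, the monotonicity \eqref{ass:mrMonotonicity} forces the penalty contribution to be nonnegative, leaving $\langle Ay_{\rho_n}-f_{\rho_n},\,y_{\rho_n}-v_{\rho_n}\rangle\leq 0$. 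Choosing here the admissible $v=y$, so that $v_{\rho_n}-y=\Phi(y_{\rho_n})-\Phi(y)\to 0$ strongly, I would use coercivity via $C_a\norm{y_{\rho_n}-y}{V}^2\leq \langle A(y_{\rho_n}-y),y_{\rho_n}-y\rangle$, split $y_{\rho_n}-y=(y_{\rho_n}-v_{\rho_n})+(v_{\rho_n}-y)$ in the first slot, and use $\langle Ay_{\rho_n}-f_{\rho_n}, v_{\rho_n}-y\rangle\to 0$ (bounded factor against strongly null factor) together with $\langle f_{\rho_n},y_{\rho_n}-y\rangle\to0$ and $\langle Ay,y_{\rho_n}-y\rangle\to0$ to obtain $\limsup C_a\norm{y_{\rho_n}-y}{V}^2\leq 0$, hence $y_{\rho_n}\to y$ strongly. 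With strong convergence secured, passing to the limit in $\langle Ay_{\rho_n}-f_{\rho_n},\,y_{\rho_n}-v_{\rho_n}\rangle\leq 0$ for an arbitrary admissible $v$ gives $\langle Ay-f,\,y-v\rangle\leq 0$, which together with the feasibility $y\leq\Phi(y)$ shows $y\in\mathbf Q(f)$.

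The main obstacle is the penalty term, which cannot be bounded directly; the device that resolves it is the shifted test function $v_{\rho_n}=v+\Phi(y_{\rho_n})-\Phi(y)$, which makes the penalty contribution nonnegative by \eqref{ass:mrMonotonicity} and \eqref{ass:mrPenal}. This is precisely where \eqref{ass:PhiCC} is indispensable, as it guarantees both $v_{\rho_n}\to v$ and that the admissibility shift is asymptotically negligible. A secondary subtlety is the passage to the limit in the nonsymmetric quadratic quantity $\langle Ay_{\rho_n},y_{\rho_n}\rangle$; rather than appeal to weak lower semicontinuity (which is delicate for a nonsymmetric coercive $A$ on a general reflexive Banach space), I would sidestep it entirely by first proving strong convergence through the coercivity estimate with the choice $v=y$, after which all limits are routine.
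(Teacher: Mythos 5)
Your proof is correct, and it follows the same skeleton as the paper's: the uniform bound from Proposition \ref{lem:existencePenalisedPDE}, the observation that the penalty term is $O(\rho)$ in $V^*$ so that \eqref{ass:mrForFeas} yields feasibility of the weak limit, and --- the key device --- the shifted test functions $v_\rho = v + \Phi(y_\rho)-\Phi(y)$, which are admissible for the penalised obstacle thanks to \eqref{ass:PhiCC} and kill the penalty contribution via \eqref{ass:mrPenal}/\eqref{ass:mrMonotonicity}. Where you genuinely depart from the paper is in the order of the last two steps. The paper first passes to the limit in the inequality under \emph{weak} convergence alone, using weak lower semicontinuity of $u \mapsto \langle Au,u\rangle$, to conclude $y \in \mathbf{Q}(f)$ (its step 3), and only then proves strong convergence with the choice $v_\rho = y + \Phi(y_\rho)-\Phi(y)$ (its step 4). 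You reverse this: you first prove strong convergence by coercivity with $v=y$ (legitimate, since feasibility $y \leq \Phi(y)$ is already in hand), and then the limit passage in the inequality for arbitrary admissible $v$ is routine, so weak lower semicontinuity is never invoked. Two remarks on what each ordering buys. First, your worry that the weak lower semicontinuity is ``delicate'' for nonsymmetric $A$ is overstated: since $A$ is linear, bounded and coercive, $\langle Au,u\rangle$ coincides with the quadratic form of the symmetric part $\tfrac12(A+A^*)$, which is convex and continuous, hence weakly lower semicontinuous --- this is all the paper needs. Second, the paper's ordering is not an accident: its step 3 is engineered to conclude $y \in \mathbf{Q}(f)$ from weak convergence only, and exactly that step is recycled in Theorem \ref{thm:newOne}, where \eqref{ass:PhiCC} is dropped, strong convergence is unavailable, and the weak lower semicontinuity is replaced by the hypothesis \eqref{ass:newWLSC}. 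Your strong-convergence-first strategy cannot be adapted to that weaker setting, so it is cleaner here but less robust as a template.
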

\begin{proof}
The proof is in four steps and is similar to the proof of Theorem 2.3 of \cite{MR2822818}.
\\

\noindent\textit{1. Uniform estimates and feasibility of limit.}  For each $\rho$, let $y_\rho$ be a solution of \eqref{eq:penalisedGeneral} (such a selection is possible due to the axiom of choice). By Proposition \ref{lem:existencePenalisedPDE}, it satisfies the estimate
\[\norm{y_\rho}{V} \leq  C\left(\norm{f_\rho}{V^*} + \norm{v_0}{V}\right),\]
and this is bounded, hence for a subsequence (which we do not attempt to differentiate for ease of reading), $y_\rho \weaklyto y$ in $V$ to some $y$.
 Rearranging the equality \eqref{eq:penalisedGeneral}, 
\[\norm{m_\rho(y_\rho - \Phi(y_\rho))}{V^*} = \rho\norm{f_\rho-Ay_\rho}{V^*} \leq C\rho\]
and therefore $m_\rho(y_\rho - \Phi(y_\rho)) \to 0$ in $V^*$ as $\rho \to 0$. Then \eqref{ass:mrForFeas} implies that $y \leq \Phi(y)$.

\medskip

\noindent\textit{2. Monotonicity formula}. 
For $v \in V$, we get by adding and subtracting the same term and using the monotonicity of $m_\rho$,
\begin{align}
\nonumber \langle m_\rho(y_\rho - \Phi(y_\rho)), y_\rho-v\rangle 
\nonumber &= \langle m_\rho(y_\rho - \Phi(y_\rho))-m_\rho(v-\Phi(y_\rho)), y_\rho-\Phi(y_\rho) + \Phi(y_\rho) - v\rangle\\
\nonumber &\quad+ \langle m_\rho(v-\Phi(y_\rho)), y_\rho-v\rangle\\
&\geq \langle m_\rho(v-\Phi(y_\rho)), y_\rho-v\rangle\label{eq:monotonicityIneq}.
\end{align}

\noindent \textit{3. Passage to the limit.} Test the equation \eqref{eq:penalisedGeneral} with $y_\rho - v$ for $v \in V$ and use \eqref{eq:monotonicityIneq} to find
\begin{align}
\langle Ay_\rho, y_\rho \rangle +\frac 1\rho \langle m_\rho(v-\Phi(y_\rho)), y_\rho-v\rangle \leq \langle f_\rho, y_\rho - v \rangle + \langle Ay_\rho, v \rangle. \label{eq:prelim}
\end{align}
Now, choose an arbitrary $v^* \in V$ with $v^* \leq \Phi(y)$ and select the test function to be 
\[v_\rho=v^* - \Phi(y) + \Phi(y_\rho).\]
With this choice, the second term on the left-hand side of the above inequality \eqref{eq:prelim} is equal to zero by \eqref{ass:mrPenal} and we find
\[\langle Ay_\rho, y_\rho \rangle \leq  \langle f_\rho, y_\rho - v_\rho \rangle + \langle Ay_\rho, v_\rho \rangle.\]
Noting that $v_\rho \to v^*$ in $V$ (thanks to the complete continuity \eqref{ass:PhiCC}) and $v_\rho \leq \Phi(y_\rho)$, take the limit inferior as $\rho \to 0$ above and use weak lower semicontinuity to get $y \in \mathbf{Q}(f)$. \\

\noindent \textit{4. Strong convergence.} Define $v_\rho := y+\Phi(y_\rho)-\Phi(y)$ which has the properties
\begin{align*}
&v_\rho \to y \text{ in $V$},\\
&v_\rho \leq \Phi(y_\rho),\\
&y_\rho - v_\rho = (y_\rho - y) + (\Phi(y)-\Phi(y_\rho)) \weaklyto 0 \text{ in $V$},
\end{align*}
the first holding since we already have $y_\rho \weaklyto y$ in $V$. 
Testing \eqref{eq:penalisedGeneral} appropriately, we have
\[\langle A(y_\rho - v_\rho), y_\rho-v_\rho \rangle = \langle f_\rho, y_\rho -v_\rho \rangle - \frac 1\rho \langle m_\rho(y_\rho-\Phi(y_\rho)), y_\rho - v_\rho\rangle  - \langle Av_\rho, y_\rho - v_\rho \rangle\]
and to this we apply the monotonicity formula and coercivity of $A$ to find
\begin{align*}
C_a\norm{y_\rho - v_\rho}{V}^2 &\leq \langle f_\rho, y_\rho -v_\rho \rangle - \frac 1\rho \langle  m_\rho(v_\rho-\Phi(y_\rho)), y_\rho - v_\rho\rangle - \langle Av_\rho, y_\rho - v_\rho \rangle\\
&= \langle f_\rho, y_\rho -v_\rho \rangle- \langle Av_\rho, y_\rho - v_\rho \rangle\tag{since $v_\rho \leq \Phi(y_\rho)$}.
\end{align*}
The right-hand side converges to zero, hence $y_\rho-v_\rho \to 0$ strongly in $V$, implying $y_\rho \to y$.
\end{proof}

Theorem \ref{thm:penalisedConvergence} requires the complete continuity condition \eqref{ass:PhiCC} on $\Phi$. Let us consider how this assumption can be weakened or substituted.
\begin{theorem}\label{thm:newOne}
Assume the conditions of Theorem \ref{thm:penalisedConvergence}, except replace the assumption \eqref{ass:PhiCC} with
\begin{align}
&\text{$\langle A(\cdot), (\Id-\Phi)(\cdot) \rangle \colon V \to \mathbb{R}$ is weakly lower semicontinuous}\label{ass:newWLSC}
\end{align}
and assume one of the following:
\begin{align}
&\text{$\Phi\colon V \to V$ is weakly sequentially continuous},\label{ass:PhiWeakCts}\\
&\text{\eqref{ass:gf1}, \eqref{ass:PhiVtoHCC} and $f_\rho \weaklyto f$ in $H$.}\nonumber
\end{align}
Then there exists a subsequence $\{\rho_n\}_n$ and elements $y_{\rho_n} \in \mathbf P_{\rho_n}(f_{\rho_n})$ such that $y_{\rho_n} \weaklyto y \in \mathbf{Q}(f)$ in $V$. 
\end{theorem}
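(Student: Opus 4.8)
The plan is to recycle the first half of the proof of Theorem~\ref{thm:penalisedConvergence} and to modify only the passage to the limit, the point being that we now settle for weak convergence of $y_\rho$ and therefore never need Step~4 (strong convergence), which was the only place where the complete continuity \eqref{ass:PhiCC} of $\Phi\colon V\to V$ was essential. First I would pick $y_\rho \in \mathbf P_\rho(f_\rho)$, note that $\{f_\rho\}$ is bounded (being convergent, weakly or strongly), and invoke the uniform bound of Proposition~\ref{lem:existencePenalisedPDE} to extract a subsequence with $y_\rho \weaklyto y$ in $V$. Exactly as in Step~1, rearranging \eqref{eq:penalisedGeneral} gives $\norm{m_\rho(y_\rho-\Phi(y_\rho))}{V^*}\leq C\rho \to 0$, whence \eqref{ass:mrForFeas} yields the feasibility $y\leq \Phi(y)$. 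The algebraic monotonicity inequality \eqref{eq:monotonicityIneq} of Step~2 needs no modification.

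The heart of the matter is Step~3. Testing \eqref{eq:penalisedGeneral} with $y_\rho-v$ and using \eqref{eq:monotonicityIneq} gives \eqref{eq:prelim}; I would then choose, for an arbitrary fixed $v^*\in V$ with $v^*\leq \Phi(y)$, the same test function $v_\rho = v^*-\Phi(y)+\Phi(y_\rho)$, so that $v_\rho-\Phi(y_\rho)=v^*-\Phi(y)\leq 0$ annihilates the penalty term through \eqref{ass:mrPenal}. The crucial difference from before is that, rather than aiming for strong convergence, I would regroup the $A$-terms so as to isolate $\langle Ay_\rho,(\Id-\Phi)(y_\rho)\rangle$ on the left:
\begin{equation*}
\langle Ay_\rho,(\Id-\Phi)(y_\rho)\rangle \leq \langle f_\rho,\, y_\rho - v^* + \Phi(y) - \Phi(y_\rho)\rangle + \langle Ay_\rho,\, v^*-\Phi(y)\rangle.
\end{equation*}
The assumption \eqref{ass:newWLSC} is made precisely for this configuration: since $y_\rho \weaklyto y$ in $V$, weak lower semicontinuity gives $\liminf_\rho \langle Ay_\rho,(\Id-\Phi)(y_\rho)\rangle \geq \langle Ay, y-\Phi(y)\rangle$, so the quadratic-in-$y_\rho$ term survives the limit from the favourable side of the inequality, compensating exactly for the lost strong convergence of $y_\rho$.

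It remains to pass to the limit on the right, and this is where the dichotomy of hypotheses enters. Because $A$ is bounded and linear, $Ay_\rho \weaklyto Ay$ in $V^*$ and the last term converges to $\langle Ay, v^*-\Phi(y)\rangle$. For the mixed source term I distinguish the two cases: under \eqref{ass:PhiWeakCts} one has $\Phi(y_\rho)\weaklyto \Phi(y)$ in $V$, hence $y_\rho-v^*+\Phi(y)-\Phi(y_\rho)\weaklyto y-v^*$ in $V$, and since $f_\rho\to f$ strongly in $V^*$ the strong--weak duality pairing converges to $\langle f, y-v^*\rangle$; under \eqref{ass:gf1} and \eqref{ass:PhiVtoHCC} I would instead carry out the pairing in $H$, using the compact embedding $V\ctsCompact H$ to get $y_\rho\to y$ strongly in $H$ and complete continuity to get $\Phi(y_\rho)\to\Phi(y)$ strongly in $H$, so that the argument converges strongly in $H$ to $y-v^*$ while $f_\rho\weaklyto f$ weakly in $H$, again delivering $(f,y-v^*)_H$. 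Taking $\liminf$ through the displayed inequality then gives $\langle Ay, y-\Phi(y)\rangle \leq \langle f, y-v^*\rangle + \langle Ay, v^*-\Phi(y)\rangle$, which simplifies to $\langle Ay-f, y-v^*\rangle\leq 0$; as $v^*\leq\Phi(y)$ was arbitrary and $y\leq\Phi(y)$, this is exactly the QVI and $y\in\mathbf Q(f)$.

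The step I expect to be the main obstacle is the convergence of the mixed term $\langle f_\rho,\, y_\rho - v^* + \Phi(y) - \Phi(y_\rho)\rangle$: dropping \eqref{ass:PhiCC} forbids strong convergence of the argument in $V$, so one must match the correct weak/strong continuity of $\Phi$ against the correct mode of convergence of $f_\rho$ (strong $f_\rho\to f$ in $V^*$ with weak $\Phi$-continuity, versus weak $f_\rho\weaklyto f$ in $H$ with $H$-compactness). Getting this pairing right, together with choosing the test function $v_\rho$ so that the $\Phi(y_\rho)$ terms cancel in the limit, is the delicate part; everything else is a direct transcription of the proof of Theorem~\ref{thm:penalisedConvergence}.
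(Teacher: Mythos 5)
Your proposal is correct and is essentially the paper's own proof: you reuse Steps 1--2 of Theorem \ref{thm:penalisedConvergence}, pick the same test function $v_\rho = v^*-\Phi(y)+\Phi(y_\rho)$, isolate $\langle Ay_\rho,(\Id-\Phi)(y_\rho)\rangle$ so that \eqref{ass:newWLSC} handles it via $\liminf$, and split the source term exactly as the paper does (strong $f_\rho\to f$ in $V^*$ against weak convergence of $y_\rho-v_\rho$ under \eqref{ass:PhiWeakCts}, versus the $H$-inner-product pairing with $y_\rho-v_\rho\to y-v^*$ strongly in $H$ in the Gelfand triple case). The only difference is cosmetic --- you move $\langle Ay_\rho,\Phi(y)-v^*\rangle$ to the right-hand side and spell out the compact-embedding details that the paper compresses into ``pass to the limit easily.''
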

\begin{proof}
We modify the third step of the proof of Theorem \ref{thm:penalisedConvergence} (and, like before, we do not distinguish subsequences of $\{\rho\}$). We write the final inequality of step 3 as $\langle Ay_\rho, y_\rho-v_\rho \rangle \leq  \langle f_\rho, y_\rho - v_\rho \rangle$, which, recalling $v_\rho=v^* - \Phi(y) + \Phi(y_\rho),$ is
\[\langle Ay_\rho, y_\rho-\Phi(y_\rho)\rangle + \langle Ay_\rho, \Phi(y)-v^* \rangle \leq  \langle f_\rho, y_\rho - v_\rho\rangle.\] 
By \eqref{ass:newWLSC}, we can take the limit inferior on the left-hand side. Regarding the right-hand side, let us consider the two cases separately.

\medskip

\noindent (1) Under \eqref{ass:PhiWeakCts}, $y_\rho-v_\rho \weaklyto y-v^*$ in $V$,  and since $f_\rho \to f$ in $V^*$, we can pass to the limit on the right-hand side and we obtain $y \in \mathbf{Q}(f)$, hence $y_\rho \weaklyto y$ in $V$. 

\medskip

\noindent (2) In the Gelfand triple case, we write the final term in the inequality above as the inner product $(f_\rho, y_\rho-v_\rho)_H$ and pass to the limit easily.
\end{proof}
\begin{remark}	
It is not difficult to see that \eqref{ass:newWLSC} and \eqref{ass:PhiWeakCts} are weaker assumptions than  \eqref{ass:PhiCC}. 
\end{remark}
The theorem provides only weak convergence but strong convergence can be attained under additional assumptions as the next remark shows.
\begin{remark}
If, in addition to the conditions of Theorem \ref{thm:newOne} under the weak sequential continuity condition \eqref{ass:PhiWeakCts}, we also have
\begin{equation} 
\langle A\Phi(\cdot), (\Id-\Phi)(\cdot)\rangle\colon V \to \mathbb{R} \text{ is weakly lower semicontinuous\footnotemark}\label{ass:newWLSCForStrong},
\end{equation}
\footnotetext{Note that \eqref{ass:PhiWeakCts} and \eqref{ass:newWLSCForStrong} imply \eqref{ass:newWLSC}. Indeed, taking the limit inferior of $\langle Au_n, (\Id-\Phi)(u_n) \rangle = \langle A(\Id-\Phi)u_n, (\Id-\Phi)u_n \rangle + \langle A\Phi(u_n), (\Id-\Phi)(u_n) \rangle$, using superadditivity and weak sequential continuity on the first term and \eqref{ass:newWLSCForStrong} on the second term allows us to deduce the claim.}then $y_{\rho_n}-\Phi(y_{\rho_n}) \to y-\Phi(y)$ in $V$. To see this, returning to step 4 of the proof of Theorem \ref{thm:penalisedConvergence} where we recall $v_\rho = y + \Phi(y_\rho) - \Phi(y)$, we start with the calculation
\begin{align*}
&\liminf_{\rho \to 0}\langle Av_\rho, y_\rho - v_\rho \rangle\\ 
&\quad= \liminf_{\rho \to 0}\left(\langle A(y-\Phi(y)), (\Id-\Phi)(y_\rho) + \Phi(y)-y \rangle + \langle A\Phi(y_\rho), (\Id-\Phi)(y_\rho) \rangle + \langle A\Phi(y_\rho), \Phi(y)-y \rangle\right)\\
&\quad\geq \langle A(y-\Phi(y)), (\Id-\Phi)(y) + \Phi(y)-y \rangle + \langle A\Phi(y), (\Id-\Phi)(y) \rangle + \langle A\Phi(y), \Phi(y)-y \rangle\\
&\quad= 0,
\end{align*}
where for the inequality we used weak continuity for the first and last terms and \eqref{ass:newWLSCForStrong} for the middle term. Now, taking the limit superior in the final inequality of the proof of Theorem \ref{thm:penalisedConvergence}, using the identity $\limsup(a_n) + \liminf(b_n) \leq \limsup(a_n + b_n)$ and the above calculation, we get
\begin{align*}
\limsup_{\rho \to 0} \langle f_\rho, y_\rho -v_\rho \rangle &\geq \limsup_{\rho \to 0} \left(C_a\norm{y_\rho - v_\rho}{V}^2 + \langle Av_\rho, y_\rho - v_\rho \rangle\right)\\
& \geq \limsup_{\rho \to 0} C_a\norm{y_\rho - v_\rho}{V}^2.
\end{align*}
Since the left-hand side is zero (by \eqref{ass:PhiWeakCts}), we deduce that $y_\rho - v_\rho \to 0$ and hence $y_\rho -\Phi(y_\rho) \to y-\Phi(y)$ in $V$. 

We see then that if for example 
\[(\Id-\Phi)^{-1}\colon V \to V \text{ exists and is continuous,}\] we would also get the strong convergence $y_\rho \to y$.
\end{remark}

\begin{remark}If $\mathbf{Q}(f)$ is a singleton, then the convergence results of the previous theorems hold for the entire sequence and not just a subsequence because the limit $y = \mathbf{Q}(f)$ is unique.
\end{remark}
%

\section{Directional differentiability}\label{sec:Diff}
In this section, we extend the results of our previous work \cite{AHR} which dealt with directional differentiability of the source-to-solution map $\mathbf{Q}$ associated to  \eqref{eq:QVI}  for non-negative source terms and directions.
Formally, the goal is to show that the following limit exists: 
\[\lim_{s \to 0^+}\frac{\mathbf{Q}(f+sd)-\mathbf{Q}(f)}{s}.\]
This is merely a formal limit since $\mathbf{Q}\colon V \rightrightarrows V$ is  set valued in general, however in case $\mathbf{Q}\colon V \to V$ is single valued, it is precise. It is important to obtain such a sensitivity result not only for applications but also for the procurement of certain types of stationarity conditions for optimal control problems with QVI constraints, a topic that we will address in \S \ref{sec:stationarity}.

We will follow closely the approach of our earlier work \cite{AHR} where we combined an iteration (by VIs) argument with the directional differentiability result for VIs in Dirichlet space case provided by Mignot \cite{MR0423155} but here, we make two refinements: instead of the order approach for the iterations employed in \cite{AHR}, we shall use a contraction technique similar to that in \S \ref{sec:existenceContraction}, and secondly, we shall use the VI differentiability result in \cite{WGuidedTour} given under a general vector lattice setting, which generalises the result in \cite{MR0423155}. For this, we begin with the following assumption on the ordering.

\begin{ass}\label{ass:forDirDiff}
Let $V$ be a reflexive Banach space which is a vector lattice induced by a closed convex cone $C$ satisfying $C \cap -C = \{0\}$ and suppose that $v_n \to v$ in $V$ implies $\sup(0,v_n) \weaklyto \sup(0,v)$ in $V$.
\end{ass}
As before, we will identify $C$ with $V_+$ and note  that the strong-weak convergence part of the above assumption is satisfied if there exists a constant $M>0$ such that $\norm{\sup(0,v)}{V} \leq C\norm{v}{V}$ for all $v \in V$.
To state the main result, we need to introduce some notation. Recall from \eqref{eq:QVI} the constraint set mapping $\mathbf{K}\colon V \rightrightarrows V$ defined by 
\[\mathbf K(w) := \{ v \in V : v \leq \Phi(w)\}.\]
This is convex and closed (since $V_+$ is closed), and associated to this, we define the \emph{radial cone} of $\mathbf K(w)$ at a point $u\in \mathbf K(w)$ by
\[\mathcal{R}_{\mathbf K(w)}(u) := \{ h \in V : \exists  s^* > 0 \text{ such that } u+sh \in \mathbf{K}(w) \;\forall s \in [0,s^*]\}\]
and the corresponding \emph{tangent cone} $\mathcal{T}_{\mathbf K(w)}(u) := \overline{\mathcal{R}_{\mathbf K(w)}(u)}$. Finally, recall the notation $B_R(y)$ to stand for the closed ball in $V$ of radius $R$ centred at $u$.

\begin{theorem}\label{thm:dirDiff1}
Let Assumption \ref{ass:forDirDiff} hold and given $f \in V^*$ and $d \in V^*$, take $y \in \mathbf{Q}(f)$ satisfying 
the local assumptions 
\begin{align}
&\text{there exists $\epsilon > 0$ such that  } \Phi\colon B_\epsilon(y) \to V \text{ is Lipschitz with Lipschitz constant $C_\Phi < C_a\slash(C_a+C_b)$},\label{ass:PhiLipBound}\\
&\text{$\Phi\colon V \to V$ is Hadamard directionally differentiable at $y$.}\label{ass:PhiHadamardAtY}\end{align}
Then, for $s > 0$ sufficiently small, there exists $y^s \in \mathbf{Q}(f+sd)\cap B_R(y)$  (where $0 < R \leq \epsilon$) and $\alpha=\alpha(d)\in V$ such that
\[y^s = y + s\alpha + o(s)\]
where $s^{-1}o(s) \to 0$ in $V$ as $s \to 0^+$  and $\alpha$ satisfies the QVI
\begin{equation}\label{eq:QVIforAlpha}
\begin{aligned}
\alpha \in \mathcal{K}^y(\alpha) &: \langle A\alpha - d, \alpha - v \rangle \leq 0 \quad \forall v \in \mathcal{K}^y(\alpha),\\
\mathcal{K}^y(\alpha) &:= \Phi'(y)(\alpha)  + \mathcal{T}_{\mathbf K(y)}(y) \cap [f-Ay]^\perp.   
\end{aligned}
\end{equation}
The directional derivative $\alpha=\alpha(d)$ is positively homogeneous in $d$. 

\end{theorem}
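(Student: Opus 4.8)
The plan is to combine a Banach fixed-point argument for local existence with the Hadamard directional differentiability of the variational inequality (VI) solution map $S(\cdot,\cdot)$, applied along an iteration and then passed to a limit. Throughout, the ordering Assumption \ref{ass:forDirDiff} is precisely what permits the VI differentiability result of \cite{WGuidedTour} to be invoked in the present vector lattice setting, so it underlies the whole argument.

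First I would set up the contraction. Writing $T_s(w):=S(f+sd,w)$, a solution $y^s\in\mathbf{Q}(f+sd)$ near $y$ is exactly a fixed point $y^s=T_s(y^s)$. The classical VI estimates give that the source-to-solution map is Lipschitz with constant $1/C_a$, and, via the translation $\tilde y=y-\Phi(w)$ reducing the obstacle problem to one with obstacle $0$ and source $f-A\Phi(w)$, that the obstacle-to-solution map is Lipschitz with constant $(C_a+C_b)/C_a$. Composing with the local bound \eqref{ass:PhiLipBound} shows $T_s$ is a contraction on $B_R(y)$ with constant $\tfrac{C_a+C_b}{C_a}C_\Phi<1$, and for $s$ small it maps $B_R(y)$ into itself; Banach's theorem yields a unique $y^s\in\mathbf{Q}(f+sd)\cap B_R(y)$, and subtracting the fixed-point relations for $y^s$ and $y$ produces the a priori rate $\norm{y^s-y}{V}\le Cs$, so in particular $y^s\to y$ as $s\to0^+$.

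Next comes the crucial step: realising both $y^s$ and $\alpha$ as limits of explicit iterates. Set $y^s_0:=y$ and $y^s_n:=S(f+sd,y^s_{n-1})$, so that $y^s_n\to y^s$ geometrically at rate $\lambda^n$ with $\lambda=\tfrac{C_a+C_b}{C_a}C_\Phi$, uniformly for small $s$. By induction on $n$ I would prove $y^s_n=y+s\alpha_n+o_n(s)$: at step $n$ the obstacle is $\Phi(y^s_{n-1})=\Phi(y)+s\,\Phi'(y)(\alpha_{n-1})+o(s)$ by \eqref{ass:PhiHadamardAtY} (this is exactly where Hadamard rather than mere Gâteaux differentiability is needed, since $y^s_{n-1}$ approaches $y$ along a varying direction), while the source direction is the fixed $d$. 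Translating to obstacle $0$ turns both perturbations into a source perturbation over the \emph{fixed} cone $\{\tilde v\le0\}$, and the Hadamard directional differentiability of the VI solution map then yields $y^s_n=y+s\alpha_n+o_n(s)$ with $\alpha_n$ solving the shifted VI $\langle A\alpha_n-d,\alpha_n-v\rangle\le0$ over $v\in\Phi'(y)(\alpha_{n-1})+\mathcal{T}_{\mathbf K(y)}(y)\cap[f-Ay]^\perp$; here the multiplier in the critical cone is identified as $f-Ay$ via Proposition \ref{lem:complementarityForQVI}, and the shift $\Phi'(y)(\alpha_{n-1})$ arises on reverting the translation. Since $\Phi'(y)$ inherits the Lipschitz constant $C_\Phi$, the map $\alpha_{n-1}\mapsto\alpha_n$ is the same contraction, so $\alpha_n\to\alpha$ where $\alpha$ solves the fixed-point relation, which is precisely the QVI \eqref{eq:QVIforAlpha}.

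Finally I would interchange the two limits. From $\norm{y^s-y-s\alpha}{V}\le\norm{y^s-y^s_n}{V}+\norm{y^s_n-y-s\alpha_n}{V}+s\norm{\alpha_n-\alpha}{V}$, the first and third terms are bounded by $\lambda^n(Cs+s\norm{\alpha}{V})$ (geometric convergence plus the rate $\norm{y^s-y}{V}\le Cs$), while the middle term is $o_n(s)$ for each fixed $n$; dividing by $s$ and choosing first $n$ large and then $s$ small makes the quotient arbitrarily small, giving $y^s=y+s\alpha+o(s)$. Positive homogeneity of $d\mapsto\alpha(d)$ then follows by checking directly that $t\alpha$ solves the derivative QVI for direction $td$ (using that $\mathcal{T}_{\mathbf K(y)}(y)\cap[f-Ay]^\perp$ is a cone and $\Phi'(y)$ is positively homogeneous) together with the uniqueness furnished by the contraction. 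I expect the main obstacle to be the joint source-and-obstacle Hadamard expansion at each iterate combined with the uniform-in-$n$ control of the remainders needed to justify the limit interchange; care is also required to keep every iterate inside $B_\epsilon(y)$, where \eqref{ass:PhiLipBound} and \eqref{ass:PhiHadamardAtY} hold, which the contraction guarantees for $s$ small.
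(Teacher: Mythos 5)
Your proposal is correct, and its skeleton coincides with the paper's proof: the same iteration $y_0^s=y$, $y_n^s=S(f+sd,y_{n-1}^s)$, the same contraction with constant $\lambda:=(1+C_bC_a^{-1})C_\Phi<1$ coming from \eqref{ass:PhiLipBound}, the same expansion $y_n^s=y+s\alpha_n+o_n(s)$ with $\alpha_n$ solving the VI over the shifted cone $\mathcal{K}^y(\alpha_{n-1})$, and the same contraction argument for $\alpha_{n-1}\mapsto\alpha_n$ (the paper's Proposition \ref{prop:strongConvAlphas}) identifying $\alpha$ as the unique solution of \eqref{eq:QVIforAlpha}. Where you genuinely deviate is in the limit interchange, which is the crux of the whole proof. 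The paper (Proposition \ref{lem:convergenceOfOstar}) proves that $s^{-1}o_n(s)\to 0$ \emph{uniformly in $n$}, by deriving a recurrence $a_n(s)\leq C a_{n-1}(s)+b_{n-1}(s)$ with $C<1$, summing the geometric series, and exploiting the compact (Hadamard) nature of the differentiability of $\Phi$ and of the VI map, together with precompactness of $\{\alpha_n\}$, to get $b_n(s)/s\to 0$ uniformly. You avoid uniformity altogether: from the a priori rate $\norm{y^s-y}{V}\leq Cs$ (which indeed follows from the two Lipschitz estimates you cite, though the paper never states it explicitly) you get $\norm{y^s-y_n^s}{V}\leq \lambda^n Cs$, and combining this with the $s$-independent geometric decay of $\norm{\alpha_n-\alpha}{V}$, you bound $s^{-1}\norm{y^s-y-s\alpha}{V}$ by two geometric tails plus $s^{-1}\norm{o_n(s)}{V}$ for a single \emph{fixed} $n$; choosing $n$ first and then $s$ closes the argument. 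This is a legitimate and in fact more elementary route: it needs the remainder estimate only for each fixed $n$ and dispenses with the paper's compactness/uniformity machinery, at the price of invoking the Lipschitz-in-$s$ bound on the fixed point. Two cosmetic inaccuracies do not affect validity: the constant in your third term should be $\norm{\alpha_1-\alpha}{V}$ rather than $\norm{\alpha}{V}$, and the two contraction constants $(1+C_bC_a^{-1})C_\Phi$ and $C_bC_a^{-1}C_\Phi$ are not literally ``the same contraction,'' but both are strictly less than $1$, which is all that is used.
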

The proof of this theorem will be given in the next subsections. For now, let us make some observations.
\begin{remark}\label{rem:remarks}
\begin{enumerate}[label=(\roman*)]
\item The stated assumptions do \textbf{not} force solutions of the QVI to be unique. We will construct examples demonstrating this fact in \S \ref{sec:examplesMulti}.
\item If there exists an $\epsilon$ such that $\Phi$ is Hadamard differentiable on $B_\epsilon(y)$ and 
\begin{equation}
\forall z \in B_\epsilon(y), \forall v \in V, \quad \norm{\Phi'(z)(v)}{V} \leq C_\Phi\norm{v}{V}\quad\text{ where } C_\Phi < C_a\slash(C_a+C_b),\label{itm:smallnessOfDerivOfPhi}
\end{equation}
then \eqref{ass:PhiLipBound} holds. This is immediate: take $u, v \in B_\epsilon(y)$ and use the mean value theorem to find
\begin{align*}
\norm{\Phi(u)-\Phi(v)}{V} &\leq \sup_{\lambda \in (0,1)}\norm{\Phi'(\lambda u + (1-\lambda)v)(u-v)}{V} \leq C_\Phi\norm{u-v}{V},
\end{align*}
where we utilised the fact that $\lambda u + (1-\lambda) v \in B_\epsilon(y)$.  It can sometimes be easier to verify \eqref{itm:smallnessOfDerivOfPhi} than \eqref{ass:PhiLipBound} depending on the problem at hand.

\item The derivative $\alpha$ is the unique solution of the QVI \eqref{eq:QVIforAlpha}, see Proposition \ref{prop:strongConvAlphas}.
\item All of the required assumptions on $\Phi$ are local, i.e., they are based at or around a neighbourhood of the chosen point $y$ and we do not ask for them to hold globally on the whole of $V$. We may introduce more local assumptions in the course of the paper and one should bear in mind that such conditions are stated in terms of a fixed element $y$ which, in later sections, need to be modified appropriately (for example in \S \ref{sec:stationarity} such assumptions should be evaluated at the function that we call $y^*$). This should become apparent from the context.
\item  In the theorem, the existence of a particular $y \in \mathbf{Q}(f)$ is \emph{assumed}; conditions under which $\mathbf{Q}(f)$ is non-empty were given in the existence results of \S \ref{sec:existence}.
\item This theorem generalises and improves the result of Theorem 1.6 in our earlier paper \cite{AHR}. In particular, the case $f, d \in V^*_+$ corresponds to the main result of \cite{AHR} (which also requires additional assumptions).
\item A differentiability result for QVIs also appears in \cite[Theorem 5.5]{MR4083198}. There, in particular, the author requires Fr\'echet differentiability for $\Phi$ at $y$. In contrast, we require only Hadamard differentiability. In \cite{MR4083198}, $A$ can be nonlinear of Fr\'echet type; we have taken $A$ to be linear in this paper for simplicity but this can be generalised: see Remark \ref{rem:generalisationOnA}.
\end{enumerate}
\end{remark}
\begin{remark}\label{rem:generalisationOnA}
We have taken $A$ to be linear for technical simplicity but an examination of the proofs that follow show that it would be possible for us to consider nonlinear $A$ that are Hadamard differentiable in this section (a key point would be to generalise \cite[Proposition 1]{AHR}, as we shall come to see in the proceeding). For the stationarity results of section \S \ref{sec:weakStationarity}, $A$ would need to be continuously Fr\'echet differentiable. The details and the resulting changes are left to the reader.
\end{remark}
Let us give an example of the functional setup which is typical for many applications.
\begin{example}[The case of a Dirichlet space]\label{eg:dirichletCase}
Suppose that $H:=L^2(X;\mu)$ where $X$ is a locally compact, separable metric space and $\mu$ is a positive Radon measure on $X$ with full support\footnote{That is, $\mu$ is a non-negative Borel measure which is finite on compact sets and strictly positive on non-empty open sets.}, and let $V \subset H$ be a dense subspace. The ordering on these spaces is given by the usual $a.e.$ ordering of functions.

Assume that there exists a symmetric, positive semidefinite bilinear form $\xi\colon V \times V \to \mathbb{R}$ such that endowing $V$ with
\[(\cdot,\cdot)_V := (\cdot,\cdot)_H + \xi(\cdot,\cdot)\]
makes it a Hilbert space. Furthermore, we assume the \emph{Markov property}\footnote{This is also known as the  \emph{unit contraction property}.}
\[\text{if $u \in V$ then $\hat u := \min(u^+,1) \in V$ and $\xi(\hat u,\hat u) \leq \xi(u,u)$}\]
and the density
\begin{equation*}
V\cap C_c(X)  \ctsDense C_c(X) \qquad \text{and}\qquad  V\cap C_c(X)  \ctsDense V. 
\end{equation*}
The pair $(V,\xi)$ is known as a \emph{regular Dirichlet form} and $V$ is the so-called \emph{Dirichlet space}. 
This framework allows us to define the notions of capacity, quasi-continuity and related concepts, see \cite[\S 2.1]{FukushimaBook} and \cite[\S 3]{Haraux1977} for more details.  

In this setting, Mignot proved\footnote{In fact, Mignot uses a weaker setting of \emph{positivity-preserving forms} rather than the Dirichlet form setting described here with also some other weaker conditions.}the polyhedricity of sets of obstacle type in \cite[Theorem 3.2]{MR0423155} and the differentiability of VI solution maps associated to such constraint sets in \cite[Theorem 3.3]{MR0423155}. We also have an explicit expression for the critical cone appearing in \eqref{eq:QVIforAlpha} via \cite[Lemma 3.2]{MR0423155}:
\[\mathcal{K}^y(w) := \{ \varphi \in V : \varphi \leq \Phi'(y)(w) \text{ q.e. on }\mathcal{A}(y) \text{ and } \langle Ay-f, \varphi -\Phi'(y)(w)\rangle = 0\}.\]
Here, `q.e.' stands for quasi-everywhere and a statement holds quasi-everywhere if it holds everywhere except on a set of capacity zero, and $\mathcal{A}(y)$ refers to the \emph{active} or \emph{coincidence set} of the solution $y$ to the QVI related to an obstacle map $\Phi$, i.e.,
\[\mathcal{A}(y) := \{ x \in X : y(x) = \Phi(y)(x)\} \quad \text{for $y \in V$.}\]
We in fact take the quasi-continuous representatives of the functions appearing in the definition so that $\mathcal{A}(y)$ is quasi-closed and defined up to sets of capacity zero. It is important to note that the set of points defining the active set is taken over $X$; in the context of some Sobolev spaces over a domain $\Omega$, this can sometimes be $X=\overline{\Omega}$ and not merely $\Omega$, see \cite[\S 1.2]{AHR} for more details.
\end{example}
Before we proceed, let us provide some notation. Define the \emph{critical cone}
\begin{equation}\label{eq:criticalCone}
\mathcal{K}^y := \mathcal{T}_{\mathbf K(y)}(y) \cap [f-Ay]^\perp,
\end{equation}
and observe the relation \[\mathcal{K}^y(w) = \Phi'(y)(w) + \mathcal{K}^y.\]
Recall that the \textit{polar cone} of a set $M \subset V$ is defined as
\[M^\circ = \{ g \in V^* : \langle g, v \rangle \leq 0 \quad \forall v \in M\}.\]

\subsection{Iteration scheme and expansion formulae}
To prove Theorem \ref{thm:dirDiff1}, we employ an iteration and passage to the limit approach like in our previous work \cite{AHR}. We  fix an arbitrary $f \in V^*$ and take an arbitrary but fixed $y \in \mathbf{Q}(f)$\footnote{Again, see \S \ref{sec:existence} for existence of such $y$.}.  
Pick a direction $d \in V^*$ and construct, similarly to \S \ref{sec:existenceContraction}, the sequence
\begin{equation}\label{eq:defnysn}
\begin{aligned}
y^s_0 &:=  y,\\
y^s_n &:= S(f+sd, y^s_{n-1}).
\end{aligned}
\end{equation}
The idea here is to expand each $y^s_n$ in terms of $y$, a directional derivative and a remainder term (both of these would depend on $n$) and then to pass to the limit in such an expansion. The natural way to proceed would be to obtain a uniform bound on $\{y_n^s\}$ which would result in the existence of a weakly convergent \emph{subsequence} $\{y_{n_j}^s\}$. This is not enough to identify the limit of $\{y_{n_j}^s\}$ due to the $(n-1)$ index in the definition of $y_n^s$, so one would need convergence of the whole sequence which holds true when, for example, one has monotonicity. However, in contrast to the sequence considered in \S \ref{sec:existenceContraction}, we do not obtain any monotonicity of $\{y_n^s\}$ since we do not assume a sign on $d$ nor do we assume monotonicity of $\Phi$. Therefore, for convergence of the full sequence, we instead look for a contraction of the map associated to $\{y_n^s\}$ on some small ball. 
\begin{lem}Assume the Lipschitz property \eqref{ass:PhiLipBound}. 
Then for any $0 < R \leq \epsilon$, $S(f+sd, \cdot)\colon B_R(y) \to B_R(y)$ is a contraction whenever 
\begin{equation*}
s \leq C_a\norm{d}{V^*}^{-1}R(1-(1+C_bC_a^{-1})C_\Phi ).
\end{equation*}
\end{lem}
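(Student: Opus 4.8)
<br>

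The goal is to show that the map $S(f+sd,\cdot)$ sends the ball $B_R(y)$ into itself and contracts on it, for the stated range of $s$. The plan is to establish two things: first that $S(f+sd,\cdot)$ is Lipschitz on $B_R(y)$ with constant strictly less than $1$ (this is the genuine contraction estimate), and second that the image of $y$ itself under the map stays close enough to $y$ that the self-mapping property $S(f+sd,\cdot)\colon B_R(y)\to B_R(y)$ holds.

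For the contraction estimate, I would take two points $\psi_1,\psi_2\in B_R(y)$ and compare the two VI solutions $S(f+sd,\psi_1)$ and $S(f+sd,\psi_2)$. The standard tool here is a Lipschitz dependence estimate for VI solution maps with respect to the obstacle: since the constraint sets are $\{v\le\Phi(\psi_i)\}$, a Mosco-type or Lipschitz-stability argument for VIs controls $\norm{S(f+sd,\psi_1)-S(f+sd,\psi_2)}{V}$ by a constant times $\norm{\Phi(\psi_1)-\Phi(\psi_2)}{V}$, with the constant involving $C_a,C_b$ (typically of the form $(1+C_b/C_a)$ or $C_b/C_a$). Composing with the Lipschitz property \eqref{ass:PhiLipBound} of $\Phi$ on $B_\epsilon(y)\supseteq B_R(y)$ then gives Lipschitz constant $(1+C_bC_a^{-1})C_\Phi$ for the composite, and the smallness hypothesis $C_\Phi<C_a/(C_a+C_b)$ from \eqref{ass:PhiLipBound} is exactly what makes $(1+C_bC_a^{-1})C_\Phi<1$, so the map is a contraction. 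This argument needs both $\psi_1,\psi_2$ to lie inside $B_\epsilon(y)$ so that the Lipschitz bound on $\Phi$ applies, which is why we restrict to $R\le\epsilon$.

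For the self-mapping property, I would estimate $\norm{S(f+sd,y)-y}{V}$. Writing $y=S(f,y)$ (since $y\in\mathbf Q(f)$ solves the VI with its own obstacle), the difference $S(f+sd,y)-S(f,y)$ is the response of the VI solution to a perturbation of the source term by $sd$, with the obstacle held fixed at $\Phi(y)$. The corresponding Lipschitz-in-source estimate for VIs gives $\norm{S(f+sd,y)-y}{V}\le C_a^{-1}\norm{sd}{V^*}=C_a^{-1}s\norm{d}{V^*}$. Then for an arbitrary $\psi\in B_R(y)$, I would use the triangle inequality
\[
\norm{S(f+sd,\psi)-y}{V}\le\norm{S(f+sd,\psi)-S(f+sd,y)}{V}+\norm{S(f+sd,y)-y}{V}\le (1+C_bC_a^{-1})C_\Phi\,R + C_a^{-1}s\norm{d}{V^*}.
\]
Requiring this to be at most $R$ rearranges precisely to the stated bound $s\le C_a\norm{d}{V^*}^{-1}R\bigl(1-(1+C_bC_a^{-1})C_\Phi\bigr)$, where the positivity of the right-hand side is again guaranteed by the smallness of $C_\Phi$.

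The main obstacle is obtaining the two quantitative VI-stability estimates — Lipschitz dependence of $S$ on the obstacle and on the source — with the sharp constants $C_b/C_a$ and $1/C_a$ respectively. These follow from testing the two VIs against each other's solutions and invoking coercivity, boundedness, and T-monotonicity, but getting the constants to line up exactly with the stated bound (so that the contraction threshold matches the hypothesis $C_\Phi<C_a/(C_a+C_b)$) requires care; one must make sure the obstacle-stability constant is $C_b/C_a$ and not something weaker, and this is where T-monotonicity is essential to handle the ordering of the constraint sets cleanly.
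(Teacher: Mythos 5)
Your proposal is correct and follows essentially the same route as the paper: the paper invokes a single combined continuous-dependence estimate for $S$ (from \cite[Equation (21)]{AHR}), namely $\norm{S(f+sd,v)-S(f,y)}{V}\leq(1+C_bC_a^{-1})\norm{\Phi(v)-\Phi(y)}{V}+C_a^{-1}s\norm{d}{V^*}$, where you split the same bound into two triangle-inequality steps (obstacle perturbation at fixed source, then source perturbation at fixed obstacle), which is only a cosmetic difference. The constants, the use of $y=S(f,y)$, the contraction estimate $C_\Phi(1+C_a^{-1}C_b)<1$, and the final rearrangement giving the threshold on $s$ all coincide with the paper's argument.
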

\begin{proof}
Let $v \in B_R(y)$; we want to show that $S(f+sd,v) \in B_R(y)$. Observe that, using $y=S(f,y)$ and continuous dependence (e.g. \cite[Equation (21)]{AHR}),
\begin{align*}
\norm{S(f+sd,v) - y}{V} &\leq (1+C_bC_a^{-1})\norm{\Phi(v)-\Phi(y)}{V} + C_a^{-1}s\norm{d}{V^*}\\
&\leq (1+C_bC_a^{-1})C_\Phi\norm{v-y}{V} + C_a^{-1}s\norm{d}{V^*}\tag{since $v, y \in B_R(y) \subset B_\epsilon(y)$}\\
&\leq (1+C_bC_a^{-1})C_\Phi R + C_a^{-1}s\norm{d}{V^*},
\end{align*}
and, using the fact that $(1+C_bC_a^{-1})C_\Phi$ equals a constant strictly less than $1$, the right-hand side is bounded above by $R$. This shows that $S(f+sd, \cdot)$ maps $B_R(y)$ into itself. 
To see that the map is a contraction, take $v, w \in B_R(y)$  and observe that
\begin{align*}
\norm{S(f+sd, v) - S(f+sd, w)}{V} 
&\leq (1+C_a^{-1}C_b)\norm{\Phi(v)-\Phi(w)}{V}
\leq C_\Phi(1+C_a^{-1}C_b)\norm{v-w}{V}.\qedhere
\end{align*}
\end{proof} 
Hence, under \eqref{ass:PhiLipBound}, we have that each $y_n^s \in B_R(y)$. By applying the Banach fixed point theorem, we obtain the following existence and convergence result.
\begin{prop}
Given $f, d \in V^*$ and $y \in \mathbf{Q}(f)$, under \eqref{ass:PhiLipBound} and sufficiently small $s>0$, there exists $y^s \in \mathbf{Q}(f+sd) \cap B_R(y)$ such that $y^s_n \to y^s$ in $V$ (where $y^s_n$ is defined in \eqref{eq:defnysn}).
\end{prop}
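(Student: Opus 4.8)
The plan is to recognise the sequence $\{y^s_n\}$ defined in \eqref{eq:defnysn} as the Picard iteration of the map $T := S(f+sd,\cdot)$ started at $y^s_0 = y$, and then to invoke the Banach fixed point theorem. First I would fix $s>0$ small enough that the preceding lemma applies, so that $T\colon B_R(y)\to B_R(y)$ is a contraction under \eqref{ass:PhiLipBound}. Since $B_R(y)$ is a closed ball in the Banach space $V$, it is a complete metric space when equipped with the metric induced by $\norm{\cdot}{V}$, and by the lemma $T$ maps this set into itself and contracts it.

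Second, the Banach fixed point theorem then supplies a unique fixed point $y^s \in B_R(y)$ of $T$, together with convergence of the successive iterates $y^s_n = T^n(y^s_0) = T^n(y)$ to $y^s$ in $V$; this is exactly the asserted convergence $y^s_n \to y^s$.

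Finally, it remains to identify this fixed point with a solution of the QVI with source $f+sd$. Since $y^s = T(y^s) = S(f+sd, y^s)$, the defining property of the VI solution map $S$ gives that $y^s$ satisfies $y^s \leq \Phi(y^s)$ together with $\langle A y^s - (f+sd), y^s - v\rangle \leq 0$ for all $v \in V$ with $v \leq \Phi(y^s)$. This is precisely the statement that $y^s \in \mathbf{Q}(f+sd)$ (compare with \eqref{eq:QVI} with source $f+sd$), so that $y^s \in \mathbf{Q}(f+sd)\cap B_R(y)$ as claimed. I expect no substantial obstacle here: the only points needing care are the completeness of the closed ball $B_R(y)$ and the observation that the fixed-point identity $y^s = S(f+sd,y^s)$ coincides verbatim with membership in $\mathbf{Q}(f+sd)$, both of which are immediate from the definitions. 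The genuine work of the argument has already been carried out in establishing the contraction estimate in the preceding lemma.
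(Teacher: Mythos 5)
Your proposal is correct and is essentially the paper's own argument: the paper likewise notes that each $y_n^s$ lies in $B_R(y)$ by the preceding contraction lemma and then invokes the Banach fixed point theorem to obtain the limit $y^s$, whose fixed-point identity $y^s = S(f+sd,y^s)$ is by definition membership in $\mathbf{Q}(f+sd)$. The details you supply (completeness of the closed ball, identification of the fixed point with a QVI solution) are exactly the routine steps the paper leaves implicit.
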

Since we want to study differentiability of QVIs, we need some differentiability for the constraint set mapping. We will henceforth assume the Hadamard differentiability at $y$ condition 
\eqref{ass:PhiHadamardAtY}. Now, making use of \cite[Theorems 4.18 and 5.2]{WGuidedTour} 
we can expand $y_1^s=S(f+sd, y)$ as follows:
\begin{equation*}
y_1^s = y + s\alpha_1 + o_1(s),
\end{equation*}
where $s^{-1}o_1(s) \to 0$ as $s \to 0^+$ and $\alpha_1 = \partial S(f,y)(d)$ is the directional derivative of $S(\cdot,y)$ in the direction $d$, and this satisfies the VI (recall $\mathcal{K}^y$ from \eqref{eq:criticalCone})
\begin{equation*}
\begin{aligned}
\alpha_1 \in \mathcal{K}^y  &:  \langle A\alpha_1 -d, \alpha_1- v \rangle \leq 0 \quad \forall v \in \mathcal{K}^y.
\end{aligned}
\end{equation*}
To acquire an expansion formula for a general $y_n^s$, 
define  for $n >1$,
\begin{align*}
\alpha_n &:=\Phi'(y)(\alpha_{n-1}) + \partial S(f,y)(d-A\Phi'(y)(\alpha_{n-1})).
\end{align*}
In exactly the same way as in \cite[Proposition 2]{AHR}, we obtain the following result. 
\begin{prop}Under \eqref{ass:PhiLipBound} and \eqref{ass:PhiHadamardAtY}, for $n \geq 1$, 
\begin{equation}\label{eq:expansionuns}
y_n^s = y + s\alpha_n + o_n(s)
\end{equation}
where $s^{-1}o_n(s) \to 0$ as $s \to 0^+$ and $\alpha_n = \alpha_n(d)$ is positively homogeneous in the direction $d$ and satisfies the VI
\begin{equation*}
\begin{aligned}
\alpha_n \in \mathcal{K}^y(\alpha_{n-1}) &: \langle A\alpha_n - d, \alpha_n - \varphi \rangle \leq 0 \quad \forall \varphi \in \mathcal{K}^y(\alpha_{n-1}),\\
\mathcal{K}^y(\alpha_{n-1}) &:= \mathcal{K}^y + \Phi'(y)(\alpha_{n-1}). 
\end{aligned}
\end{equation*}
\end{prop}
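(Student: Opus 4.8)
The plan is to argue by induction on $n$, the base case $n=1$ being exactly the expansion recorded immediately above the statement (with $\alpha_1 = \partial S(f,y)(d)$). For the inductive step I would assume the expansion $y_{n-1}^s = y + s\alpha_{n-1} + o_{n-1}(s)$ together with the positive homogeneity of $d \mapsto \alpha_{n-1}$, and deduce the corresponding statement for $y_n^s = S(f+sd, y_{n-1}^s)$. Throughout, the contraction result of the previous proposition keeps $y_n^s \in B_R(y) \subset B_\epsilon(y)$, so that every evaluation of $\Phi$ occurs in the region where \eqref{ass:PhiLipBound} and \eqref{ass:PhiHadamardAtY} are in force and the remainders stay controlled.

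The first key step is to reduce the moving-obstacle VI to one over a fixed constraint set. Setting $z_n^s := y_n^s - \Phi(y_{n-1}^s)$, the constraint $y_n^s \le \Phi(y_{n-1}^s)$ becomes $z_n^s \in K_0 := \{v \in V : v \le 0\}$, and using the linearity of $A$ the VI for $y_n^s$ is equivalent to $z_n^s = S_0(g_s)$, where $S_0$ denotes the VI solution map for the fixed constraint set $K_0$ and $g_s := f + sd - A\Phi(y_{n-1}^s)$. At $s=0$ one has $z_n^0 = y - \Phi(y)$, since $y_{n-1}^0 = y_n^0 = y$. The second step is to expand the source: because $y_{n-1}^s = y + s\alpha_{n-1} + o_{n-1}(s)$, the Hadamard directional differentiability \eqref{ass:PhiHadamardAtY} yields $\Phi(y_{n-1}^s) = \Phi(y) + s\Phi'(y)(\alpha_{n-1}) + o(s)$. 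It is precisely here that Hadamard (rather than merely Gateaux) differentiability is essential, since the argument approaches $y$ along the perturbed path $\alpha_{n-1} + s^{-1}o_{n-1}(s) \to \alpha_{n-1}$ and not along a fixed ray. Consequently $g_s = g_0 + s\delta + o(s)$ with $g_0 = f - A\Phi(y)$ and $\delta := d - A\Phi'(y)(\alpha_{n-1})$, and applying the VI differentiability result \cite[Theorems 4.18 and 5.2]{WGuidedTour} to $S_0$ at $g_0$ in the direction $\delta$ (these give Hadamard differentiability, so the $o(s)$ term in the source is harmless) produces $z_n^s = z_n^0 + s\beta_n + o(s)$, where $\beta_n$ solves the VI on the critical cone of $S_0$ at $z_n^0$.

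The final step is to translate back and identify the coefficient. The tangent cone is translation invariant, so $\mathcal{T}_{K_0}(y-\Phi(y)) = \mathcal{T}_{\mathbf K(y)}(y)$, and the associated multiplier is $g_0 - Az_n^0 = f - Ay$; hence the critical cone of $S_0$ at $z_n^0$ is exactly $\mathcal{K}^y$ and $\beta_n = \partial S(f,y)(\delta)$. Adding the two expansions gives $y_n^s = z_n^s + \Phi(y_{n-1}^s) = y + s\bigl(\beta_n + \Phi'(y)(\alpha_{n-1})\bigr) + o(s)$, and the coefficient $\beta_n + \Phi'(y)(\alpha_{n-1})$ is precisely $\alpha_n$ by definition. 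Writing a test function $\varphi \in \mathcal{K}^y(\alpha_{n-1})$ as $\varphi = \Phi'(y)(\alpha_{n-1}) + v$ with $v \in \mathcal{K}^y$ and substituting into the VI solved by $\beta_n$ shows at once that $\alpha_n$ satisfies the stated VI over $\mathcal{K}^y(\alpha_{n-1}) = \mathcal{K}^y + \Phi'(y)(\alpha_{n-1})$. Positive homogeneity of $\alpha_n$ then follows from that of $\alpha_{n-1}$ (inductive hypothesis), of $\Phi'(y)(\cdot)$, and of the VI derivative $\partial S(f,y)(\cdot)$: these combine to give $\delta(td) = t\,\delta(d)$ and hence $\alpha_n(td) = t\,\alpha_n(d)$ for $t>0$.

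I expect the main obstacle to be the rigorous control of the remainder terms under composition: verifying that the error in $\Phi(y_{n-1}^s)$ is genuinely $o(s)$ in $V$ — which hinges on the Hadamard property together with the uniform bound $y_n^s \in B_R(y)$ — and that this error, once fed through the map $S_0$, again contributes only $o(s)$. Everything else is the algebraic bookkeeping that mirrors \cite[Proposition 2]{AHR}.
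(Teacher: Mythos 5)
Your proposal is correct and is essentially the paper's own argument: the paper proves this proposition by induction using the expansion formula of \cite[Proposition 1]{AHR}, namely $y_{n+1}^s = S(f+sd,\,y+s\alpha_n + o_n(s)) = y + s\bigl(\Phi'(y)(\alpha_n) + \partial S(f,y)(d-A\Phi'(y)(\alpha_n))\bigr) + o_{n+1}(s)$, and that formula is established by precisely your mechanism --- translating the moving-obstacle VI to the fixed-obstacle map $S_0$, expanding $\Phi(y_{n-1}^s)$ by Hadamard differentiability along the perturbed path, and invoking the Hadamard (conical) differentiability of $S_0$ --- as is visible in the definition \eqref{eq:defnOn} of $o_n$ through the two remainders $\hat l$ and $\hat o$. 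The only difference is that you unpack the proof of \cite[Proposition 1]{AHR} rather than citing it, which is what the paper does when it omits the proof.
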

See \eqref{eq:defnOn} for the precise definition of $o_n$. The proof of this proposition, which we omit here, is by induction and makes use of the expansion formula of \cite[Proposition 1]{AHR}, which tells us that
\[y_{n+1}^s = S(f+sd,y+s\alpha_n + o_n(s)) = y + s(\Phi'(y)(\alpha_n) + \partial S(f,y)(d-A\Phi'(y)(\alpha_n))) + o_{n+1}(s).\]
It remains then to pass to the limit in \eqref{eq:expansionuns} and to identify the corresponding limits. 
\subsection{Passage to the limit}\label{sec:ptl}
Observe that the conditions \eqref{ass:PhiLipBound} and \eqref{ass:PhiHadamardAtY}  imply that
\begin{equation}
\text{$\Phi'(y)\colon V \to V$ is Lipschitz with Lipschitz constant }C_L < {C_a}\slash{C_b}\label{ass:PhiDerivativeLipschitzConstant},
\end{equation}
which is precisely what is needed for the coming intermediary results. In particular, it allows for the Banach fixed point theorem to be amenable to show the convergence of $\{\alpha_n\}$ as the next proposition demonstrates. But first, let us prove that \eqref{ass:PhiDerivativeLipschitzConstant} is indeed a consequence. From the expansion formula
$\Phi(y+sh)=\Phi(y) + s\Phi'(y)(h) + o(s;h)$ where $o(\cdot,h)$ is a remainder term, we find
\begin{align*}
\norm{\Phi'(y)(h)-\Phi'(y)(d)}{V} &\leq \frac 1s\norm{\Phi(y+sh)-\Phi(y+sd)}{V}  + \frac 1s\norm{o(s;d)-o(s;h)}{V}.
\end{align*}
Without loss of generality, we may assume that at least one of $h$ and $d$ is non-zero. We see that if $s \leq \epsilon\slash(\norm{h}{V} + \norm{d}{V})$, 
we have  $y+sh, y+sd \in B_\epsilon(y)$ and therefore, by \eqref{ass:PhiLipBound},
\begin{align*}
\norm{\Phi'(y)(h)-\Phi'(y)(d)}{V} &\leq C_\Phi\norm{h-d}{V}  + \frac 1s\norm{o(s;d)-o(s;h)}{V}.
\end{align*}
Taking $s \to 0^+$ we obtain the statement after noting that $C_\Phi < C_L$.

\begin{prop}\label{prop:strongConvAlphas}Under \eqref{ass:PhiHadamardAtY} and \eqref{ass:PhiDerivativeLipschitzConstant}, $\alpha_n \to \alpha$ in $V$ where $\alpha$ is the unique solution of the QVI \eqref{eq:QVIforAlpha}.
\end{prop}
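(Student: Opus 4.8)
The plan is to show that the iteration map defining the sequence $\{\alpha_n\}$ is a contraction on $V$, so that the Banach fixed point theorem simultaneously yields strong convergence $\alpha_n \to \alpha$ and identifies the limit $\alpha$ as the unique solution of \eqref{eq:QVIforAlpha}. Concretely, I would introduce the map $\mathcal{G}\colon V \to V$ defined by $\mathcal{G}(w) := \partial S(f,y)\big(d - A\Phi'(y)(w)\big) + \Phi'(y)(w)$, so that the recursion reads $\alpha_n = \mathcal{G}(\alpha_{n-1})$ and a fixed point $\alpha = \mathcal{G}(\alpha)$ is exactly a solution of the QVI \eqref{eq:QVIforAlpha} (since $\beta := \partial S(f,y)(d - A\Phi'(y)(w))$ solves the VI over the fixed convex set $\mathcal{K}^y$, and then $\mathcal{G}(w) = \beta + \Phi'(y)(w)$ solves the VI over $\mathcal{K}^y + \Phi'(y)(w) = \mathcal{K}^y(w)$, exactly as in the expansion formula giving $\alpha_n$).

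First I would establish the Lipschitz estimate for $\mathcal{G}$. The key ingredient is the Lipschitz continuity of the VI directional-derivative solution map $d' \mapsto \partial S(f,y)(d')$ in its direction argument, which follows from the standard nonexpansivity-type estimate for VI solutions over a fixed convex set (the critical cone $\mathcal{K}^y$ is fixed, independent of $w$); this gives $\norm{\partial S(f,y)(d_1') - \partial S(f,y)(d_2')}{V} \leq C_a^{-1}\norm{d_1' - d_2'}{V^*}$. Combining this with boundedness of $A$ and the Lipschitz constant $C_L$ of $\Phi'(y)$ from \eqref{ass:PhiDerivativeLipschitzConstant}, I would compute
\begin{align*}
\norm{\mathcal{G}(w_1) - \mathcal{G}(w_2)}{V} &\leq C_a^{-1}\norm{A\Phi'(y)(w_1) - A\Phi'(y)(w_2)}{V^*} + \norm{\Phi'(y)(w_1) - \Phi'(y)(w_2)}{V}\\
&\leq \big(C_a^{-1}C_b + 1\big)C_L\norm{w_1 - w_2}{V}.
\end{align*}
The condition \eqref{ass:PhiDerivativeLipschitzConstant} says $C_L < C_a/C_b$, which unfortunately only gives the contraction factor $(C_a^{-1}C_b + 1)C_L < (C_a^{-1}C_b + 1)(C_a/C_b) = 1 + C_a/C_b$, so I expect the genuinely delicate point to be sharpening this bound: one must exploit the $T$-monotonicity and the projection structure of $\partial S$ more carefully (as in \cite[Proposition 1]{AHR}) rather than using the crude continuous-dependence estimate, so that the effective factor becomes $(1 + C_bC_a^{-1})C_L$ measured against the threshold that genuinely yields a value below $1$. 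This is precisely the analogue of the contraction argument already used for $S(f+sd,\cdot)$ in the preceding lemma, where the same combination $(1 + C_bC_a^{-1})C_\Phi < 1$ appeared, and the Lipschitz bound on $\Phi'(y)$ was chosen exactly so that this carries over to the derivative level.

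Once the contraction factor is confirmed to be strictly below $1$, the Banach fixed point theorem applies directly on the complete metric space $V$: it yields a unique fixed point $\alpha$ with $\mathcal{G}(\alpha) = \alpha$, and standard contraction estimates give $\norm{\alpha_n - \alpha}{V} \to 0$ for the full sequence. The fixed-point identity $\alpha = \mathcal{G}(\alpha)$ is then, by the VI characterisation above, equivalent to $\alpha$ solving \eqref{eq:QVIforAlpha}, and uniqueness of the fixed point of the contraction gives uniqueness of the solution to the QVI \eqref{eq:QVIforAlpha}, completing the proof. I would close by remarking that the main obstacle is really the Lipschitz estimate for the direction-to-derivative map $\partial S(f,y)(\cdot)$ and keeping track of the sharp constant; everything downstream is a routine application of the fixed point theorem.
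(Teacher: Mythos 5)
Your overall architecture matches the paper's: you identify the iteration map (your $\mathcal{G}$ is exactly the paper's solution map $T$ of the VI over $\mathcal{K}^y(w)$, as your shift argument correctly shows), and you aim to apply the Banach fixed point theorem so that the fixed point is simultaneously the limit of $\{\alpha_n\}$ and the unique solution of \eqref{eq:QVIforAlpha}. However, there is a genuine gap in the only nontrivial step, and you acknowledge it without repairing it: the contraction estimate. Your decomposition splits $\mathcal{G}(w_1)-\mathcal{G}(w_2)$ by the triangle inequality into a $\partial S(f,y)$-difference and a $\Phi'(y)$-difference and estimates each separately, yielding the factor $(1+C_a^{-1}C_b)C_L$. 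Under the proposition's actual hypothesis \eqref{ass:PhiDerivativeLipschitzConstant}, namely $C_L < C_a/C_b$, this factor can be as large as $1+C_a/C_b$, so no contraction follows. Your proposed fix is circular: you say that exploiting the structure of $\partial S$ "more carefully" should make the effective factor $(1+C_bC_a^{-1})C_L$ --- but that is precisely the factor you already computed, and it is still not below $1$ under \eqref{ass:PhiDerivativeLipschitzConstant}. (It would be below $1$ only under the stronger bound $C_L < C_a/(C_a+C_b)$, which is what \eqref{ass:PhiLipBound} gives for $\Phi$ itself; the proposition is stated, and later used in Proposition \ref{lem:alphaUniquenessAndCty}, under the weaker derivative bound $C_L < C_a/C_b$.)

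The missing idea is that one should not estimate the two contributions separately at all. The paper tests the VI satisfied by $\beta_1 = T(\gamma_1)$ with the feasible point $\beta_2 - \Phi'(y)(\gamma_2) + \Phi'(y)(\gamma_1)$, tests the VI for $\beta_2 = T(\gamma_2)$ with the symmetric shift, and adds the two inequalities. The cross terms cancel and one is left with
\begin{equation*}
\langle A(\beta_1-\beta_2), \beta_1-\beta_2 + \Phi'(y)(\gamma_2)-\Phi'(y)(\gamma_1)\rangle \leq 0,
\end{equation*}
whence coercivity and boundedness of $A$ give
\begin{equation*}
\norm{\beta_1-\beta_2}{V} \leq \frac{C_b}{C_a}\norm{\Phi'(y)(\gamma_1)-\Phi'(y)(\gamma_2)}{V} \leq \frac{C_b C_L}{C_a}\norm{\gamma_1-\gamma_2}{V},
\end{equation*}
with contraction factor $C_bC_L/C_a < 1$ exactly under $C_L < C_a/C_b$. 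In other words, the nonexpansivity of $\partial S(f,y)$ and the translation of the constraint set by $\Phi'(y)$ interact with a cancellation that your additive splitting throws away; the factor $C_a^{-1}C_b\,C_L$ replaces your $(1+C_a^{-1}C_b)C_L$. With this estimate in hand, the remainder of your argument (Banach fixed point theorem, identification of fixed points with solutions of \eqref{eq:QVIforAlpha}, uniqueness) goes through exactly as you wrote it.
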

\begin{proof}
Denote by $T\colon V \to V$ the solution map $\gamma \mapsto \beta$  of the inequality
\[\beta \in \mathcal{K}^y(\gamma) : \langle A\beta - d, \beta - \varphi \rangle \leq 0 \quad \forall \varphi \in \mathcal{K}^y(\gamma).\]
This has a unique solution by the Lions--Stampacchia theorem \cite[\S 4:3, Theorem 3.1]{Rodrigues},  
hence $T$ is well defined.

Consider $\gamma_1, \gamma_2 \in V$ with $\beta_1 := T(\gamma_1)$ and $\beta_2 := T(\gamma_2)$. Testing the inequality for $\beta_1$ with the feasible element $\beta_2-\Phi'(y)(\gamma_2) + \Phi'(y)(\gamma_1)$ and vice versa and then combining both of the resulting inequalities, we find
\[\langle A(\beta_1 - \beta_2), \beta_1-\beta_2 + \Phi'(y)(\gamma_2)- \Phi'(y)(\gamma_1) \rangle \leq 0,\]
which implies, using \eqref{ass:PhiDerivativeLipschitzConstant},
\begin{align*}
\norm{\beta_1-\beta_2}{V} &\leq \frac{C_b}{C_a}\norm{\Phi'(y)(\gamma_2)-\Phi'(y)(\gamma_1)}{V} 
< \norm{\gamma_2-\gamma_1}{V}.
\end{align*}
This shows that $T\colon V \to V$ is a contraction. 
Therefore, thanks to the Banach fixed point theorem, the iterative sequence $\beta_n := T(\beta_{n-1})$, $\beta_1 := \alpha_1$, is such that $\beta_n \equiv \alpha_n$ (by uniqueness of solutions) and $\alpha_n \to \alpha$ strongly in $V$ where $\alpha$ is the fixed point of $T$.
\end{proof}


Thanks to this result, it follows that $o_n(s) \to o^*(s)$ in $V$ for some $o^*(s)$. We can send $n \to \infty$ in \eqref{eq:expansionuns}  to obtain
\begin{equation*}
y^s = y + s\alpha + o^*(s),
\end{equation*}
and it is left for us to show that $o^*$ is a remainder term. The idea in \cite{AHR} was to show that the convergence $s^{-1}o_n(s) \to 0$ as $s \to 0^+$ is \textit{uniform} in $n$, which is sufficient to commute the limits $s \to 0^+$ and $n \to \infty$ for $s^{-1}o_n(s)$, giving the desired behaviour $s^{-1}o^*(s) \to 0$ as $s \to 0^+$.  
This was done in \cite[Lemma 14]{AHR}, the proof of which we will now adapt under the context of our current (more general) setting.  For this, we need some more notation. 
For $v \in V$ and $h_s \in V$, we define the remainder term associated to $\Phi$ 
\begin{equation}\label{eq:defnHatl}
\hat l(s,h,h_s;v) := \Phi(v+sh_s) - \Phi(v) - s\Phi'(v)(h),
\end{equation}
and since $\Phi$ is Hadamard differentiable at $y$, if $h_s \to h$ in $V$ as $s \to 0^+$, then $s^{-1}\hat l(s,h,h_s;y) \to 0$ as $s \to 0^+$. We write $\hat l(s,h,h;v) = l(s,h;v)$  when $h_s \equiv h$. Now let $S_0\colon V^* \to V$ be the map $f \mapsto u$ of the following VI with zero lower obstacle:
\begin{equation*}
u \in V_+ : \langle Au -f, u-v \rangle \leq 0 \quad \forall v \in V_+.
\end{equation*}
In a similar fashion to $\hat l$, we denote the remainder term associated to the expansion formula of $S_0$ by $\hat o$:
\[\hat o(s,h, h_s;f):=S_0(f+sh_s)-S_0(f)-sS_0'(f)(h).\]
\begin{prop}\label{lem:convergenceOfOstar}
Under \eqref{ass:PhiLipBound} and \eqref{ass:PhiHadamardAtY}, 
$s^{-1}o^*(s) \to 0$ as $s \to 0^+$. 
\end{prop}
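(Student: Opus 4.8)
The plan is to follow the strategy of \cite[Lemma 14]{AHR}: rather than working directly with $o^*$, I would show that the convergence $s^{-1}o_n(s) \to 0$ as $s \to 0^+$ is \emph{uniform in $n$}. Once this uniformity is established, since we already know from Proposition \ref{prop:strongConvAlphas} (and the subsequent remark) that $o_n(s) \to o^*(s)$ in $V$ for each fixed $s$, the two limits $s \to 0^+$ and $n \to \infty$ may be interchanged for the quantity $s^{-1}o_n(s)$, yielding $s^{-1}o^*(s) \to 0$ as $s \to 0^+$. So the crux is a quantitative, $n$-independent estimate on the remainder.

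First I would write down an explicit recursion for $o_n$. Using the expansion formula of \cite[Proposition 1]{AHR} applied to $y_{n+1}^s = S(f+sd, y_n^s)$ with $y_n^s = y + s\alpha_n + o_n(s)$, and comparing with the definition $\alpha_{n+1} = \Phi'(y)(\alpha_n) + \partial S(f,y)(d - A\Phi'(y)(\alpha_n))$, I expect a relation of the schematic form
\begin{equation*}
o_{n+1}(s) = \big(S_0'(f-Ay)\big)\big(-A\,\hat l(s,\alpha_n,\alpha_n + s^{-1}o_n(s);y)\big) + \hat l(s,\alpha_n,\alpha_n+s^{-1}o_n(s);y) + \hat o(s,\,\cdot\,;\,\cdot\,),
\end{equation*}
where $\hat l$ and $\hat o$ are the remainder terms of $\Phi$ and $S_0$ defined in \eqref{eq:defnHatl} and the line above. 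The key structural point is that $S_0'(f-Ay)$, being a VI-derivative onto a closed convex cone, is a nonexpansive (Lipschitz-1) positively homogeneous map, and $A$ has norm $C_b$; combined with the Lipschitz bound \eqref{ass:PhiDerivativeLipschitzConstant} on $\Phi'(y)$ with constant $C_L < C_a/C_b$, this lets me bound the ``linearised'' part of $o_{n+1}$ by a contraction factor $q := (1+C_bC_a^{-1})C_\Phi < 1$ (coming from \eqref{ass:PhiLipBound}) times $\|o_n(s)\|_V$, plus genuine remainder pieces.

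The heart of the argument is therefore a Gronwall/geometric-series estimate: denoting $\eta_n(s) := \sup_{k \le n} s^{-1}\|o_k(s)\|_V$, I would derive an inequality of the form
\begin{equation*}
s^{-1}\norm{o_{n+1}(s)}{V} \le q\, s^{-1}\norm{o_n(s)}{V} + r_n(s),
\end{equation*}
where $r_n(s)$ collects the truly higher-order contributions (the $\hat l$ and $\hat o$ terms). The delicate part is controlling $r_n(s)$ uniformly in $n$: I must show that $\sup_n s^{-1} r_n(s) \to 0$ as $s \to 0^+$. This is exactly where the uniformity is won or lost, and it is the main obstacle. The difficulty is that the direction arguments fed into $\hat l$ and $\hat o$ are the $n$-dependent quantities $\alpha_n$ (and the perturbed directions $\alpha_n + s^{-1}o_n(s)$), so a naive application of the pointwise-in-direction remainder decay is insufficient. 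I would handle this by exploiting that $\{\alpha_n\}$ is a \emph{convergent} (hence precompact) sequence by Proposition \ref{prop:strongConvAlphas}, so that the directions live in a compact set, and that $\Phi$ is Hadamard (not merely G\^ateaux) differentiable at $y$ — Hadamard differentiability gives precisely the uniformity of the remainder over directions ranging in compact sets and over perturbed directions $h_s \to h$. Summing the geometric recursion with ratio $q<1$ then gives $\eta_n(s) \le (1-q)^{-1} \sup_n r_n(s)$ uniformly in $n$, and letting $s \to 0^+$ closes the argument. The remaining steps (passing $n \to \infty$ and invoking the interchange of limits) are routine once the uniform bound is in hand.
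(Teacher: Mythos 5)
Your proposal is correct and follows essentially the same route as the paper's proof: establish that $s^{-1}o_n(s)\to 0$ uniformly in $n$ via the recursion for $o_n$ from \cite[Proposition 1]{AHR}, a contraction factor $C_\Phi(1+C_a^{-1}C_b)<1$ coming from \eqref{ass:PhiLipBound}, a geometric-series bound, and uniform decay of the remainder terms obtained from the precompactness of $\{\alpha_n\}$ (Proposition \ref{prop:strongConvAlphas}) together with Hadamard differentiability, then interchange the limits. The only cosmetic deviations are your schematic form of the recursion (the paper absorbs the perturbation into the $\hat o$ term rather than applying $S_0'$ to it) and a minor notational slip ($r_n(s)$ versus $s^{-1}r_n(s)$), neither of which affects the argument.
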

\begin{proof}
Since $y+s\alpha_n + o_n(s) = y_n^s \in B_\epsilon(y)$ and $\{\alpha_n\}$ is bounded, let us say by $M$, if $s \leq M^{-1}\epsilon$, then $y+s\alpha_n \in B_\epsilon(y)$ too. Hence, from \eqref{eq:defnHatl} and the Lipschitz property \eqref{ass:PhiLipBound}, we have
\begin{align} 
\nonumber \norm{\hat l(s,\alpha_n, \alpha_n + s^{-1}o_n(s);y)}{V}  &\leq \norm{\hat l(s,\alpha_n, \alpha_n + s^{-1}o_n(s);y) - l(s,\alpha_n;y)}{V} + \norm{l(s,\alpha_n;y)}{V}\\
\nonumber  &= \norm{\Phi(y+s(\alpha_n + s^{-1}o_n(s)))-\Phi(y+s\alpha_n)}{V} + \norm{l(s,\alpha_n;y)}{V}\\
 &\leq C_\Phi\norm{o_n(s)}{V} + \norm{l(s,\alpha_n;y)}{V}.\label{eq:lBound}
\end{align}
We see from \cite[Equation (34) and Proposition 1]{AHR} that  $o_n$ has the definition
\begin{align}
\nonumber o_n(s) &:=\hat l(s,\alpha_{n-1},\alpha_{n-1}+s^{-1}o_{n-1}(s); y)\\
&\quad - \hat o(s,A\Phi'(y)(\alpha_{n-1})-d, A\Phi'(y)(\alpha_{n-1})-d + As^{-1}\hat l(s,\alpha_{n-1},\alpha_{n-1}+s^{-1}o_{n-1}(s)); A\Phi(y)-f).\label{eq:defnOn}
\end{align}
For ease of reading, let us omit the base point from the expressions for $\hat l, l, \hat o$ and $o$ from now on. That is, we write $\hat l(\cdot,\cdot,\cdot)$ instead of $\hat l(\cdot,\cdot,\cdot; y)$ and likewise for the other terms. In the above equality, taking norms and, on the right-hand side, using \eqref{eq:lBound} on the first term and the corresponding estimate
\begin{equation*}
\norm{\hat o(s,h, h + s^{-1}h_s)}{V} \leq C_a^{-1}\norm{h_s}{V^*} + \norm{o(s,h)}{V}
\end{equation*} 
for $S_0$ and its remainder term (see \cite[Lemma 1]{AHR}) on the second term, we find
\begin{align*}
\norm{o_n(s)}{V} 
&\leq C_\Phi\norm{o_{n-1}(s)}{V} + \norm{l(s,\alpha_{n-1})}{V} + C_a^{-1}C_b\norm{\hat l(s,\alpha_{n-1},\alpha_{n-1}+s^{-1}o_{n-1}(s))}{V}+ \norm{o(s,A\Phi'(y)(\alpha_{n-1})-d)}{V}\\
&\leq C_\Phi(1+C_a^{-1}C_b)\norm{o_{n-1}(s)}{V} + (1+C_a^{-1}C_b)\norm{l(s,\alpha_{n-1})}{V} + \norm{o(s,A\Phi'(y)(\alpha_{n-1})-d)}{V},
\end{align*}
where we again used \eqref{eq:lBound} on the penultimate term in the first line to obtain the second inequality. Defining
\[a_{n}(s):=\norm{o_{n}(s)}{V} \qquad \text{and}\qquad b_{n}(s):= (1+C_a^{-1}C_b)\norm{l(s,\alpha_{n})}{V} + \norm{o(s,A\Phi'(y)(\alpha_{n})-d)}{V},\]
the above can be recast as
\[a_n(s) \leq Ca_{n-1}(s) + b_{n-1}(s)\]
for some $C<1$ by the assumption on $C_\Phi$ in \eqref{ass:PhiLipBound}.  
Solving this recurrence inequality gives
\begin{align}
a_{n}(s) &\leq C^{n-1}a_{1}(s) + C^{n-2}b_1(s) + C^{n-3}b_2(s) + \hdots + Cb_{n-2}(s) + b_{n-1}(s)\label{eq:prelim2}.
\end{align}
Now, consider
\[\frac{b_{n-1}(s)}{s} = \frac{(1+C_a^{-1}C_b)\norm{l(s,\alpha_{n-1})}{V}}{s} + \frac{\norm{o(s,A\Phi'(y)(\alpha_{n-1})-d)}{V}}{s}.\]
By Proposition \ref{prop:strongConvAlphas}, $\alpha_n \to \alpha$ strongly in $V$, thus $\cl{\{\alpha_{n-1}\}}$ and $\cl{\{A\Phi'(y)(\alpha_{n-1})-d\}}$ are compact sets in $V$ and $V^*$ respectively. Since the remainder terms $l$ and $o$ appearing in the displayed equality above arise from the Hadamard (and hence compact) differentiability of $\Phi$ and the solution map $S_0$ associated to VIs, it follows that $l(s,\gamma)\slash s$ and $o(s,h)\slash s$ both converge to zero uniformly for $\gamma$ and $h$ belonging to   $\cl{\{\alpha_{n-1}\}}$ and $\cl{\{A\Phi'(y)(\alpha_{n-1})-d\}}$ respectively. Because $\{\alpha_{n-1}\} \subset \cl{\{\alpha_{n-1}\}}$ and $\{A\Phi'(y)(\alpha_{n-1})-d\} \subset \cl{\{A\Phi'(y)(\alpha_{n-1})-d\}}$, we have that
\begin{align*}
\frac{l(s,\gamma)}{s} &\to 0 \text{ uniformly in $\gamma \in \{\alpha_{n-1}\}$} \qquad \text{and}\qquad \frac{o(s,h)}{s} \to 0 \text{ uniformly in $h \in \{A\Phi'(y)(\alpha_{n-1})-d\},$}
\end{align*}
which then gives 
\[\frac{b_{n-1}(s)}{s} \to 0 \quad\text{uniformly in $n$}.\]
This, along with \eqref{eq:prelim2} and the geometric series estimate
$C^{n-2} + C^{n-3} + \hdots + C + 1 = (1-C^{n-1})\slash (1-C) \leq 1\slash(1-C)$ implies that for every $\epsilon > 0$, there exists an $s_0$ independent of $n$ such that
\[\frac{\norm{o_n(s)}{V}}{s} \leq \epsilon \quad \text{when $s \leq s_0$}\]
which means precisely that $s^{-1}o_n(s) \to 0$ as $s \to 0^+$ uniformly in $n$. Finally, recalling that $o_n(s)$ converges in $V$, taking the limit as $n \to \infty$ in the above inequality, we deduce that $s^{-1}o^*(s) \to 0$ as $s \to 0^+$. %
\end{proof}
This concludes the proof of Theorem \ref{thm:dirDiff1}.
\begin{remark}
 It is worth noting that the complete continuity assumption \eqref{ass:PhiCC} is not needed for the result (the strong convergence of $\{y_n^s\}$ assured by the application of the Banach fixed point theorem allowed us to circumvent complete continuity). Furthermore, complete continuity of $\Phi'(y)$ is not needed for the characterisation of the directional derivative; continuity suffices (which is guaranteed since Hadamard derivatives are continuous with respect to the direction), unlike in \S 5.1 and \S 5.2 of \cite{AHR}.
\end{remark}

 \subsection{Continuity properties of the directional derivative}\label{sec:continuityOfDirDer}
We now study the conditions under which continuity of the map taking the direction $d$ into the directional derivative $\alpha$ in \eqref{eq:QVIforAlpha} is assured. We recall \eqref{eq:QVIforAlpha} for convenience:
\begin{equation*}
\begin{aligned}
\alpha \in \mathcal{K}^y(\alpha) &: \langle A\alpha - d, \alpha - v \rangle \leq 0 \quad \forall v \in \mathcal{K}^y(\alpha),\\
\mathcal{K}^y(w) &:= \mathcal{K}^y+ \Phi'(y)(w).  
\end{aligned}
\end{equation*}
\begin{prop}\label{lem:alphaUniquenessAndCty}
Under \eqref{ass:PhiDerivativeLipschitzConstant},  
$d \mapsto \alpha(d)$ is continuous from $V^*$ to $V$. That is, if $d_j \to d$ in $V^*$, then
\[\alpha_{j} \to \alpha \quad\text{in $V$}\]
where $\alpha_j$ and $\alpha$ are the solutions of \eqref{eq:QVIforAlpha} with source terms $d_j$ and $d$ respectively.
\end{prop}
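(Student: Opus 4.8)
The plan is to exploit the contraction structure already established in Proposition \ref{prop:strongConvAlphas} and combine it with the classical continuous dependence estimate for variational inequalities with respect to the source term, through a standard perturbation-of-fixed-point argument. This will in fact yield Lipschitz continuity, which is stronger than the asserted continuity.

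First I would make the dependence of the solution operator on the source explicit: for a fixed source $g \in V^*$, let $T_g\colon V \to V$ denote the map $\gamma \mapsto \beta$ where $\beta$ solves
\[
\beta \in \mathcal{K}^y(\gamma) : \langle A\beta - g, \beta - \varphi \rangle \leq 0 \quad \forall \varphi \in \mathcal{K}^y(\gamma).
\]
By Proposition \ref{prop:strongConvAlphas}, $\alpha(g)$ is the unique fixed point of $T_g$, and the contraction estimate derived there holds with a constant $\kappa := (C_b\slash C_a)C_L < 1$ that does \emph{not} depend on $g$: the argument only uses the feasibility of a shifted test function (which is unaffected by the source) together with the Lipschitz bound \eqref{ass:PhiDerivativeLipschitzConstant} on $\Phi'(y)$. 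This uniformity of $\kappa$ in the source is the feature that drives the whole proof.

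Next I would record the source-dependence estimate for the VI at a \emph{fixed} $\gamma$. Since $\mathcal{K}^y(\gamma) = \mathcal{K}^y + \Phi'(y)(\gamma)$ is independent of the source, the constraint set is the same for both $d$ and $d'$; testing the VI for $\beta := T_d(\gamma)$ with the feasible element $T_{d'}(\gamma)$ and vice versa, adding the two inequalities, and invoking coercivity gives
\[
\norm{T_d(\gamma) - T_{d'}(\gamma)}{V} \leq \frac{1}{C_a}\norm{d-d'}{V^*}.
\]
Finally, the perturbation argument closes everything: writing $\alpha := \alpha(d)$ and $\alpha' := \alpha(d')$ and using the fixed-point identities $\alpha = T_d(\alpha)$ and $\alpha' = T_{d'}(\alpha')$,
\[
\norm{\alpha - \alpha'}{V} \leq \norm{T_d(\alpha) - T_d(\alpha')}{V} + \norm{T_d(\alpha') - T_{d'}(\alpha')}{V} \leq \kappa \norm{\alpha - \alpha'}{V} + \frac{1}{C_a}\norm{d - d'}{V^*},
\]
and rearranging yields $\norm{\alpha(d) - \alpha(d')}{V} \leq \left(C_a(1-\kappa)\right)^{-1}\norm{d - d'}{V^*}$.

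I expect no serious obstacle here. The only points requiring care are conceptual rather than computational: first, confirming that the contraction constant obtained in Proposition \ref{prop:strongConvAlphas} is genuinely independent of the source term, and second, observing that the VI comparison across two different sources is legitimate precisely because the constraint set $\mathcal{K}^y(\gamma)$ at a frozen $\gamma$ does not see the source at all. Once these are in place, the recurrence-free estimate above delivers the result immediately.
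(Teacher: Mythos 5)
Your proof is correct, and it takes a genuinely different route from the paper's. The paper works directly with the two QVIs: it cross-tests the inequality for $\alpha_j$ with the shifted feasible element $\alpha_k - \Phi'(y)(\alpha_k) + \Phi'(y)(\alpha_j)$ and symmetrically for $\alpha_k$, adds the two inequalities, and extracts in one computation the estimate $(C_a - C_bC_L)\norm{\alpha_j-\alpha_k}{V} \leq (1+C_L)\norm{d_j-d_k}{V^*}$; it then concludes that $\{\alpha_j\}$ is Cauchy and identifies the limit by passing to the limit in the QVI (using the test functions $v_j = v - \Phi'(y)(\alpha) + \Phi'(y)(\alpha_j)$ and closedness of the critical cone). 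You instead decouple the two sources of perturbation: the constraint-set dependence is handled by the contraction property of $T_d$ (whose constant $\kappa = C_bC_LC_a^{-1}$ is, as you correctly note, independent of the source because the source cancels when the two inequalities with common right-hand side are added), and the source dependence is handled by the classical VI stability estimate at a frozen obstacle, glued together by the triangle inequality through the auxiliary element $T_d(\alpha')$. Your decomposition buys two things: the estimate is directly between the two exact solutions, so no Cauchy argument and no limit identification in the QVI is needed at all, and the resulting Lipschitz constant $(C_a - C_bC_L)^{-1}$ is slightly sharper than the paper's $(1+C_L)(C_a - C_bC_L)^{-1}$. What the paper's version buys is self-containedness: it never needs to invoke the fixed-point operator or the auxiliary solution $T_d(\alpha')$, and its single cross-testing computation is the same template used elsewhere in the paper (e.g.\ in Proposition \ref{prop:strongConvAlphas} itself). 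Both arguments rest on the same two pillars, coercivity of $A$ and the bound $C_L < C_a/C_b$ from \eqref{ass:PhiDerivativeLipschitzConstant}, and both in fact deliver Lipschitz continuity, which is stronger than the continuity asserted in the statement.
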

\begin{proof}
The element $\alpha_j$ associated to $d_j$ satisfies
\[\alpha_j \in\mathcal{K}^y(\alpha_j) : \langle A\alpha_j - d_j, \alpha_j -v \rangle \leq 0 \quad \forall v \in\mathcal{K}^y(\alpha_j).\]
Take $j,k \in \mathbb{N}$ and in the inequality for $\alpha_j$, take the test function $v = \alpha_k - \Phi'( y)(\alpha_k) + \Phi'( y)(\alpha_j)$ which is clearly feasible, whilst in the inequality for $\alpha_k$, set $v = \alpha_j - \Phi'( y)(\alpha_j) + \Phi'( y)(\alpha_k)$ to obtain
\begin{align*}
\langle A\alpha_j - d_j, \alpha_j -\alpha_k + \Phi'( y)(\alpha_k) - \Phi'( y)(\alpha_j)  \rangle &\leq 0,\\
\langle A\alpha_k - d_k, \alpha_k -\alpha_n + \Phi'( y)(\alpha_j) - \Phi'( y)(\alpha_k)  \rangle &\leq 0.
\end{align*}
Adding these, we find
\begin{align*}
\langle A(\alpha_j - \alpha_k) - (d_j-d_k), \alpha_j - \alpha_k + \Phi'( y)(\alpha_k) - \Phi'( y)(\alpha_n)  \rangle \leq 0,
\end{align*}
which implies, using \eqref{ass:PhiDerivativeLipschitzConstant},
\begin{align*}
C_a\norm{\alpha_j-\alpha_k}{V}^2 
&\leq \norm{d_j-d_k}{V^*}\norm{\alpha_j - \alpha_k}{V}+ C_b\norm{\alpha_k-\alpha_j}{V}\norm{\Phi'( y)(\alpha_k) - \Phi'( y)(\alpha_j)}{V}\\
&\quad + \norm{d_k-d_j}{V^*}\norm{\Phi'( y)(\alpha_k) - \Phi'( y)(\alpha_j)}{V}\\
&\leq \norm{d_j-d_k}{V^*}\norm{\alpha_j - \alpha_k}{V} + C_bC_L\norm{\alpha_k - \alpha_j}{V}^2 + C_L\norm{d_k-d_j}{V^*}\norm{\alpha_k - \alpha_j}{V}.
\end{align*}
Manipulating, we find that $\{\alpha_j\}$ is a Cauchy sequence and thus there exists an $\alpha \in V$ with
\[\alpha_j \to \alpha \quad \text{in $V$}.\]
Now, in the inequality for $\alpha_j$, choose the test function $v_j := v-\Phi'( y)(\alpha) + \Phi'( y)(\alpha_j)$ where $v$ is such that $v \in \mathcal{K}^ y(\alpha)$. It follows that $v_j \to v$ in $V$. This allows us to pass to the limit and we get
\[\langle A\alpha -d, \alpha - v\rangle \leq 0 \quad \forall v \in \mathcal{K}^y(\alpha)\]
and it remains to be seen that $\alpha \in \mathcal{K}^y(\alpha)$, which is evident since the critical cone is closed.
\end{proof}
\subsection{Complementarity characterisation of the directional derivative}
We now look for an analogue of the complementarity characterisation of Proposition \ref{lem:complementarityForQVI} for the QVI \eqref{eq:QVIforAlpha} satisfied by the directional derivative. 
\begin{prop}\label{lem:complementarityCharacterisationDerivativeA}
The QVI \eqref{eq:QVIforAlpha} 
is equivalent to the complementarity system
\begin{subequations}
\begin{align}
\alpha &- \Phi'( y)(\alpha) \in \mathcal{K}^y,\\
\xi_d &= d-A\alpha,\\
\xi_d &\in (\mathcal{K}^y)^\circ,\\
\langle \xi_d&, \Phi'(y)(\alpha) - \alpha \rangle = 0.\label{eq:ccalphaQVIOrtho}
\end{align}
\end{subequations}
\end{prop}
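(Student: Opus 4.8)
The plan is to reduce the QVI \eqref{eq:QVIforAlpha} to a standard variational inequality over the closed convex cone $\mathcal{K}^y$ and then invoke the elementary polar-cone complementarity characterisation, in direct analogy with the proof of Proposition \ref{lem:complementarityForQVI}. First I would record the structural observation that $\mathcal{K}^y = \mathcal{T}_{\mathbf{K}(y)}(y) \cap [f-Ay]^\perp$ is a closed convex cone, being the intersection of the closed convex tangent cone with the subspace $[f-Ay]^\perp$. This homogeneity is precisely what will legitimise the scaling test functions used below, and it is the essential difference from the argument for Proposition \ref{lem:complementarityForQVI}, where the feasible set was an affine translate of $V_-$ rather than a cone.

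Next I would change variables. Writing the feasible set as $\mathcal{K}^y(\alpha) = \Phi'(y)(\alpha) + \mathcal{K}^y$, every admissible $v$ has the form $v = \Phi'(y)(\alpha) + w$ with $w \in \mathcal{K}^y$, so that $\alpha - v = (\alpha - \Phi'(y)(\alpha)) - w$. Setting $\beta := \alpha - \Phi'(y)(\alpha)$ and $\xi_d := d - A\alpha$, the membership $\alpha \in \mathcal{K}^y(\alpha)$ becomes exactly the first line of the system, namely $\beta \in \mathcal{K}^y$, while the variational inequality $\langle A\alpha - d, \alpha - v\rangle \leq 0$ rewrites as $\langle \xi_d, w - \beta\rangle \leq 0$ for all $w \in \mathcal{K}^y$.

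It then remains to show that, for $\beta \in \mathcal{K}^y$, the inequality $\langle \xi_d, w - \beta\rangle \leq 0$ for all $w \in \mathcal{K}^y$ is equivalent to $\xi_d \in (\mathcal{K}^y)^\circ$ together with the orthogonality $\langle \xi_d, \beta\rangle = 0$; the latter is \eqref{eq:ccalphaQVIOrtho} once one notes that $\langle \xi_d, \Phi'(y)(\alpha)-\alpha\rangle = -\langle \xi_d, \beta\rangle$. For the forward direction I would exploit the cone property: testing with $w = 0 \in \mathcal{K}^y$ gives $\langle \xi_d, \beta\rangle \geq 0$, while testing with $w = 2\beta \in \mathcal{K}^y$ gives $\langle \xi_d, \beta\rangle \leq 0$, whence $\langle \xi_d, \beta\rangle = 0$; substituting this back yields $\langle \xi_d, w\rangle \leq 0$ for all $w \in \mathcal{K}^y$, i.e. $\xi_d \in (\mathcal{K}^y)^\circ$. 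The converse is immediate, since for any $w \in \mathcal{K}^y$ one has $\langle \xi_d, w - \beta\rangle = \langle \xi_d, w\rangle \leq 0$ by polarity and orthogonality. The whole argument is routine; the only point demanding genuine attention is the admissibility of the scaling test functions $0$ and $2\beta$, which is guaranteed exactly by the cone property established in the first step, so I would make sure that observation is stated explicitly rather than taken for granted.
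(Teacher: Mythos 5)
Your proof is correct and follows essentially the same route as the paper's: both use the cone structure of $\mathcal{K}^y$ to test with the scalings corresponding to $w=0$ and $w=2(\alpha-\Phi'(y)(\alpha))$ (in the paper, $v=\Phi'(y)(\alpha)$ and $v=2\alpha-\Phi'(y)(\alpha)$) to obtain the orthogonality, and then deduce $\xi_d\in(\mathcal{K}^y)^\circ$, with the converse being immediate. The only cosmetic difference is that the paper derives polarity by testing with $v=\alpha+w$ (using that the convex cone is closed under addition), whereas you substitute the already-established orthogonality into the inequality; this is a negligible variation, not a different argument.
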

\begin{proof}
As noted above, $\alpha - \Phi'( y)(\alpha)$ belongs to the set $\mathcal{K}^y.$ Define $\xi_d:=d-A\alpha$ which by definition satisfies
\[\alpha - \Phi'(y)(\alpha) \in \mathcal{K}^y : \langle \xi_d, \alpha - v\rangle \geq 0 \quad \forall v \in V : v - \Phi'( y)(\alpha) \in \mathcal{K}^y.\]
Taking $v=\Phi'( y)(\alpha)$ here and then $v=2\alpha - \Phi'( y)(\alpha)$ (which is feasible since $v-\Phi'( y)(\alpha)$ is twice a function that belongs to $\mathcal{K}^y$) shows the orthogonality condition \eqref{eq:ccalphaQVIOrtho}.

Let $w \in \mathcal{K}^y$ and select $v=\alpha +w$ (this is feasible since $v-\Phi'( y)(\alpha) = \alpha - \Phi'(y)(\alpha) + w \in \mathcal{K}^y + \mathcal{K}^y$ and the tangent cone, being a convex cone, is closed under addition). With this choice, we obtain
\[\langle \xi_d, w \rangle \leq 0 \quad \forall w \in \mathcal{K}^y,\]
meaning precisely that $\xi_d \in (\mathcal{K}^y)^\circ$.  The reverse direction holds by the same trick as in the proof of Proposition \ref{lem:complementarityForQVI}.
\end{proof}
\subsection{Examples of QVIs with multiple solutions}\label{sec:examplesMulti}
In this section, we construct explicit examples of QVIs with non-unique solutions such that the assumptions of Theorem \ref{thm:dirDiff1} are satisfied, thus verifying that multiplicity of solutions is not lost under our assumptions. 

\paragraph{Example 1} Let $\Omega \subset \mathbb{R}^n$ be a bounded Lipschitz domain  and set $V:=H^{k}(\Omega)$ with $H=L^2(\Omega)$ forming a Gelfand triple.  
Below, all norms and inner products that appear are over $H$. 

Pick $\delta>0$ and select a sequence $\{y_n\}_{n=1}^N$ of smooth functions satisfying $\norm{y_n-y_m}{}^2 > 4\delta^2$ for each $m,n \in \{1, \hdots, N\}$ with $m\neq n$ and $N \geq 2$ fixed. 
Take a smooth cutoff function $\nu \in C^\infty(\mathbb{R})$ with $0 \leq \nu \leq 1$ and
\[\nu(t) = \begin{cases}
1 &: \text{if $t \in (-\delta^2, \delta^2)$},\\
0 &: \text{if $|t| \geq 2\delta^2$}.
\end{cases}
\]
For a parameter $y \in V$, define the map $\Phi_y\colon V \to V$ by
\[\Phi_y(u) := \nu(\norm{u-y}{}^2)y\] and set 
\[\Phi(u) := \sum_{n=1}^N \Phi_{y_n}(u).\]
Note that $\Phi\colon V \to V$ and  $\Phi(y_n) =y_n$ (because $\Phi_{y_n}(y_m) = y_n\delta_{nm}$).  Let the elliptic operator $A\colon V \to V^*$ have the property that $Ay_n \in H$ for each $n$  and define the pointwise a.e. maximum $f:=\max(Ay_1, \cdots, Ay_N) \in H$. Then the QVI
\[\text{find } u \leq \Phi(u) : \langle Au - f, u-v \rangle \leq 0 \quad \forall v \in V : v \leq \Phi(u)\]
has multiple solutions and indeed each $y_n \in \mathbf{Q}(f)$ is a solution. To see this, simply observe that $Ay_n-f \leq 0$ and $y_n-v = \Phi(y_n)-v \geq 0$ for all $v \in V$ with $v \leq \Phi(y_n)$.


It follows from the expression $\Phi_y'(u)(h) = 2y\nu'(\norm{u-y}{}^2)(h,u-y)$ that
\[\Phi'(u)(h) = \sum_{n=1}^N 2y_n\nu'(\norm{u-y_n}{}^2)(h,u-y_n)\]
and hence $\Phi'(B_\delta(y_n))\equiv 0$ and thus \eqref{itm:smallnessOfDerivOfPhi} is trivially satisfied (hence also \eqref{ass:PhiLipBound} and \eqref{ass:PhiDerivativeLipschitzConstant} by Remark \ref{rem:remarks} (ii) and the digression at the start of \S \ref{sec:ptl}).
Hence, all the requirements of Theorem \ref{thm:dirDiff1} have been met and we obtain for every $d \in V^*$ the existence of of $y_m^s \in \mathbf{Q}(f+sd)$ and $\alpha_m \in V$ such that
\[\lim_{s \to 0^+}\frac{y_m^s-y_m}{s} = \alpha_m.\]
 Let us also note that in addition, $\Phi'(y_n)\colon V \to V$ is completely continuous.  
\paragraph{Example 2}
A second example, without the need for the source term $f$ to be defined in terms of $\{y_n\}$, can be given under the same initial setting as above. For $n=1, \hdots, N$, take $\psi_n \in V$ to be given distinct obstacles such that the associated solutions $y_n \in V$ of the VIs 
\[y_n \leq \psi_n : \langle Ay_n - f, y_n - v \rangle \leq 0 \quad \forall v \in V : v \leq \psi_n\]
are distinct too. We suppose that $\delta$ is chosen such that $\norm{y_n-y_m}{}^2 > 4\delta^2$, which is possible since the $y_n$ are distinct functions. With $\nu$ as above, define now $\Phi_n\colon V \to V$ by
\[\Phi_n(u) := \nu(\norm{u-y_n}{}^2)\psi_n\] and set 
\[\Phi(u) := \sum_{n=1}^N \Phi_{n}(u).\]
We have $\Phi(y_n) = \psi_n$ and each $y_n$ is again a solution of the QVI associated to $\Phi$ with source term $f$, i.e., $y_n \in \mathbf{Q}(f)$.  Furthermore, 
\[\Phi'(u)(h) = \sum_{n=1}^N 2\psi_n\nu'(\norm{u-y_n}{}^2)(h,u-y_n)\]
and we can argue as before to derive the other properties and results.

\section{Existence of optimal controls}\label{sec:existenceOC}
We now address the optimal control problem \eqref{eq:ocProblem}. Regarding the function space context in this section, we take
\begin{enumerate}[label=(\roman*)]
\item $V \cts H$ to be a continuous embedding of reflexive Banach spaces, \vspace{-0.2cm}
\item $U$ to be a reflexive Banach space with $U \ctsCompact V^*$,\vspace{-0.2cm}
\item $U_{ad} \subseteq U$ to be a non-empty and weakly sequentially closed\footnote{That is, if $u_n \weaklyto u$ in $U$ with $u_n \in U_{ad}$, then $u \in U_{ad}$.} set.
\end{enumerate}
Given $\nu >0$ and a desired state $y_d \in H$, define $J\colon H \times U \to \mathbb{R}$ by
\[J(y,u) := \frac 12\norm{y-y_d}{H}^2 + \frac{\nu}{2}\norm{u}{U}^2,\] 
and consider the problem \eqref{eq:ocProblem} which we recall here:
\begin{equation*}
\min_{\substack{u \in U_{ad}\\ y \in \mathbf Q(u)}}J(y,u).
\end{equation*}
\begin{theorem}\label{thm:existenceOC}
Let Assumption \ref{ass:seqReg} hold, suppose that $\mathbf{Q}(u)$ is non-empty\footnote{See \S \ref{sec:existence}.} for every $u \in U_{ad}$ and let the feasbility condition
\eqref{eq:assH1} and the complete continuity \eqref{ass:PhiCC} hold. Then there exists an optimal control $u^* \in U_{ad}$ and associated state $y^* \in \mathbf{Q}(u^*)$ to the problem \eqref{eq:ocProblem}.
\end{theorem}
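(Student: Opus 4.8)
The plan is to run the direct method of the calculus of variations. Since $U_{ad}\neq\emptyset$ and $\mathbf{Q}(u)\neq\emptyset$ for every $u\in U_{ad}$, the admissible set of pairs is non-empty and $J\geq 0$, so the infimum $j:=\inf J$ is a finite non-negative number. First I would fix a minimising sequence $\{(u_n,y_n)\}$ with $u_n\in U_{ad}$, $y_n\in\mathbf{Q}(u_n)$ and $J(y_n,u_n)\to j$. The regularisation term $\tfrac{\nu}{2}\norm{u}{U}^2$ makes $J$ coercive in $u$, so $\{u_n\}$ is bounded in $U$; by reflexivity of $U$ I extract $u_n\weaklyto u^*$ in $U$, and since $U_{ad}$ is weakly sequentially closed, $u^*\in U_{ad}$. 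The compact embedding $U\ctsCompact V^*$ then upgrades this to the strong convergence $u_n\to u^*$ in $V^*$, which will be essential for the limit passage.

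Next I would derive a uniform bound on $\{y_n\}$ in $V$ by testing the QVI solved by $y_n$. Under the first alternative in \eqref{eq:assH1} (a $v_0\in V$ with $v_0\leq\Phi(v)$ for all $v$), the function $v_0$ is feasible for the constraint set $\mathbf K(y_n)$, and testing $\langle Ay_n-u_n,y_n-v_0\rangle\leq 0$ combined with coercivity and boundedness of $A$ and a Young inequality yields $\norm{y_n}{V}\leq C(\norm{u_n}{V^*}+\norm{v_0}{V})$; under the alternative \eqref{ass:normPhi} I would instead test with the always-feasible $\Phi(y_n)$ and absorb using $C_X<C_a/C_b$, exactly as in the boundedness lemma of \S\ref{sec:existenceContraction}. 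Since $\{u_n\}$ is bounded in $V^*$, in either case $\{y_n\}$ is bounded in $V$, and reflexivity gives a further subsequence with $y_n\weaklyto y^*$ in $V$.

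The crux of the argument, and what I expect to be the main obstacle, is showing $y^*\in\mathbf{Q}(u^*)$. Feasibility of the limit is handled by the complete continuity \eqref{ass:PhiCC}: it gives $\Phi(y_n)\to\Phi(y^*)$ strongly, so $\Phi(y_n)-y_n\weaklyto\Phi(y^*)-y^*$, and since $V_+$ is closed and convex (hence weakly sequentially closed) and $\Phi(y_n)-y_n\in V_+$, we conclude $y^*\leq\Phi(y^*)$. For the variational inequality I would pick an arbitrary $v^*\leq\Phi(y^*)$ and build the recovery test functions $v_n:=v^*-\Phi(y^*)+\Phi(y_n)$, which satisfy $v_n\leq\Phi(y_n)$ and, by \eqref{ass:PhiCC}, $v_n\to v^*$ strongly in $V$. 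Testing $\langle Ay_n-u_n,y_n-v_n\rangle\leq 0$, I rearrange to $\langle Ay_n,y_n\rangle\leq\langle Ay_n,v_n\rangle+\langle u_n,y_n-v_n\rangle$ and take $\liminf$. The delicate term is $\langle Ay_n,y_n\rangle$: although $A$ need not be symmetric, its symmetric part is still bounded and coercive, so the quadratic form $y\mapsto\langle Ay,y\rangle$ is convex and continuous, hence weakly lower semicontinuous, giving $\liminf\langle Ay_n,y_n\rangle\geq\langle Ay^*,y^*\rangle$. The remaining terms are genuine limits: $Ay_n\weaklyto Ay^*$ in $V^*$ paired against $v_n\to v^*$ strongly, and $u_n\to u^*$ strongly in $V^*$ paired against $y_n-v_n\weaklyto y^*-v^*$. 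This yields $\langle Ay^*-u^*,y^*-v^*\rangle\leq 0$ for every feasible $v^*$, i.e.\ $y^*\in\mathbf{Q}(u^*)$.

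Finally I would conclude by lower semicontinuity of the cost. The continuous embedding $V\cts H$ turns $y_n\weaklyto y^*$ in $V$ into $y_n\weaklyto y^*$ in $H$, so $\norm{\cdot-y_d}{H}^2$ (convex, continuous) is weakly lower semicontinuous, and likewise $\norm{\cdot}{U}^2$ is weakly lower semicontinuous along $u_n\weaklyto u^*$. Hence $J(y^*,u^*)\leq\liminf J(y_n,u_n)=j$, and since $(u^*,y^*)$ is admissible, $J(y^*,u^*)=j$, so $(u^*,y^*)$ is optimal. The only real subtlety beyond routine bookkeeping is the weak lower semicontinuity of $\langle A\cdot,\cdot\rangle$ in the non-symmetric case and the construction of the recovery sequence $v_n$, both of which hinge on coercivity of $A$ and the complete continuity of $\Phi$.
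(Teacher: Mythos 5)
Your proof is correct, but the key limit passage differs from the paper's. The paper upgrades the weak convergence $y_{n_j} \weaklyto y^*$ to \emph{strong} convergence in $V$ before touching the variational inequality: it invokes the continuous-dependence estimate for VIs (viewing each $y_{n_j} \in \mathbf{Q}(u_{n_j})$ as the solution of a VI with obstacle $\Phi(y_{n_j})$), namely $\norm{y_{n_j}-y_{n_k}}{V} \leq C\left(\norm{u_{n_j}-u_{n_k}}{V^*} + \norm{\Phi(y_{n_j})-\Phi(y_{n_k})}{V}\right)$, whose right-hand side is Cauchy thanks to the compact embedding $U \ctsCompact V^*$ and the complete continuity \eqref{ass:PhiCC}; with $y_{n_j} \to y^*$ strongly, passing to the limit with the recovery test functions $v_{n_j} = v - \Phi(y^*) + \Phi(y_{n_j})$ is then immediate, and so is the lower semicontinuity of the cost. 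You instead keep only weak convergence of the states and compensate with the weak lower semicontinuity of the quadratic form $y \mapsto \langle Ay, y\rangle$ (valid since $\langle A z, z\rangle \geq 0$ makes the form convex and continuous, even for non-symmetric $A$) — a Minty-type argument — together with the same recovery sequence. Both are sound; the trade-off is that the paper's route yields strong convergence of the minimising states as a by-product (which it reuses, e.g., in Proposition \ref{lem:convPenOCProblems}), whereas yours avoids the VI continuous-dependence estimate altogether and is the natural argument when only weak convergence is available — this is essentially the relaxation the paper itself points to in Remark \ref{rem:newTwo}(1), though note that since you still use \eqref{ass:PhiCC} for feasibility and the recovery sequence, the extra generality gained here is modest. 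Your explicit derivations of the uniform bound on $\{y_n\}$ under both alternatives of \eqref{eq:assH1}, and of the feasibility $y^* \leq \Phi(y^*)$ via weak sequential closedness of $V_+$, fill in details the paper leaves implicit.
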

\begin{proof}
Let $u_n \in U_{ad}$ be an infimising sequence with $y_n \in \mathbf{Q}(u_n)$, i.e., \[J(y_n, u_n) \to \inf_{\substack{u \in U_{ad},\\ y \in \mathbf{Q}(u)}}J(y,u).\]
Then $\{u_n\}$ and $\{y_n\}$ are bounded in $U$ and $V$ respectively (the latter arises from \eqref{eq:assH1}) and therefore, there exists a subsequence such that
\[u_{n_j} \weaklyto u^* \text{ in $U$} \qquad\text{and}\qquad y_{n_j} \weaklyto y^* \text{ in $V$}.\]
By assumption, $u^*$ also belongs to $U_{ad}$. Since the $y_n$ are solutions of QVIs, we have the following estimate
\[\norm{y_{n_j} - y_{n_k}}{V} \leq C\left(\norm{u_{n_j} - u_{n_k}}{V^*} + \norm{\Phi(y_{n_j})- \Phi(y_{n_k})}{V}\right).\]
In the limit, the first term on the right-hand side vanishes due to the compact embedding, and the second term vanishes too because $\Phi$ is completely continuous due to \eqref{ass:PhiCC}. Thus $\{y_{n_j}\}$ is Cauchy in $V$ and $y_{n_j} \to y^*$ in $V$. Taking an arbitrary $v \in V$ such that $v \leq \Phi(y^*)$, we set $v_{n_j} := v-\Phi(y^*) + \Phi(y_{n_j})$ and use this as a test function in the QVI for $y_{n_j}$ in which we can pass to the limit to find $y^* \in \mathbf Q(u^*)$. To see that this pair is optimal, we observe that (dispensing with the subsequence notation now), using the continuity of the embedding $V \cts H$,
\begin{align*}
J(y^*,u^*) &\leq \liminf_{n \to \infty} J(y_n, u_n) \leq \lim_{n \to \infty} J(y_n, u_n) = \min_{\substack{u \in U_{ad}\\ y \in \mathbf Q(u)}}J(y,u).\qedhere
\end{align*}
\end{proof}
Regarding regularity of the optimal control, see Theorem \ref{thm:ocPenalisation}. In general there is no uniqueness for the optimal control and state regardless of whether $\mathbf{Q}$ is single valued or not.  

\subsection{The penalised optimal control problem}\label{sec:penalisedOCProblem}
Let us return to the context of \S \ref{sec:penalisationForExistence} and consider for each $\rho>0$ the penalisation of \eqref{eq:ocProblem}:
\begin{equation}\label{eq:ocProblemPen}
\min_{u \in U_{ad}} J(y_\rho,u) \quad\text{such that}\quad  Ay_\rho + \frac 1\rho m_\rho(y_\rho-\Phi(y_\rho)) = u.
\end{equation}
We remind the reader that $m_\rho$ is taken to satisfy \eqref{ass:mrPenal}--\eqref{ass:mrForFeas}. Recalling the map $\mathbf P_\rho$ from \S \ref{sec:penalisationForExistence}, we can write the equation above as $y_\rho \in \mathbf{P}_\rho(u)$. The reason for considering this problem is because we will use this to derive stationarity conditions in the next section but first 
let us check that this minimisation problem suitably approximates \eqref{eq:ocProblem}.
\begin{prop}\label{lem:convPenOCProblems}
Let Assumption \ref{ass:seqReg}, \eqref{ass:feasiblePoint}, \eqref{ass:mrCC} and \eqref{ass:PhiCC}  hold and suppose that $\mathbf{Q}$ is single valued. Then there exist optimal pairs $(y_\rho^*, u_\rho^*)$  of \eqref{eq:ocProblemPen} and an optimal pair $(y^*,u^*)$ of \eqref{eq:ocProblem} such that
\[(y_\rho^*, u_\rho^*) \to (y^*, u^*) \text{ in $V \times U$.}\]
\end{prop}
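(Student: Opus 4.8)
The plan is to prove this via a standard $\Gamma$-convergence-style argument for optimal control problems governed by penalised state equations, combining the existence results already established with the approximation result in Theorem~\ref{thm:penalisedConvergence}. The overall strategy has two directions: a \emph{recovery} direction (the penalised optima do no worse asymptotically than the QVI optimum) and a \emph{lower bound} direction (any weak limit of penalised optima is feasible and optimal for the QVI problem). First I would invoke Proposition~\ref{lem:existencePenalisedPDE}, whose hypotheses are met under Assumption~\ref{ass:seqReg}, \eqref{ass:feasiblePoint} and \eqref{ass:mrCC}, to ensure $\mathbf{P}_\rho(u)$ is non-empty for each admissible $u$; combined with the standard direct-method argument (using $U \ctsCompact V^*$ and weak sequential closedness of $U_{ad}$, exactly as in the proof of Theorem~\ref{thm:existenceOC}) this yields the existence of an optimal pair $(y_\rho^*, u_\rho^*)$ of \eqref{eq:ocProblemPen} for each fixed $\rho$. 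Existence of an optimal pair $(y^*,u^*)$ for \eqref{eq:ocProblem} itself follows from Theorem~\ref{thm:existenceOC}, whose hypotheses are implied by the present ones (note \eqref{ass:feasiblePoint} is a special case of \eqref{eq:assH1}).

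Next I would establish a uniform bound. Fix the QVI-optimal $u^*$ as an admissible control for \eqref{eq:ocProblemPen} and let $\hat y_\rho \in \mathbf P_\rho(u^*)$; the uniform estimate in Proposition~\ref{lem:existencePenalisedPDE} gives $\|\hat y_\rho\|_V \leq C(\|u^*\|_{V^*} + \|v_0\|_V)$. Hence the optimal values satisfy
\[
J(y_\rho^*, u_\rho^*) \leq J(\hat y_\rho, u^*),
\]
and since $\Phi\colon V \to V$ is completely continuous \eqref{ass:PhiCC}, the argument of Theorem~\ref{thm:penalisedConvergence} (which is exactly the situation $f_\rho \equiv u^*$) shows $\hat y_\rho \to \mathbf Q(u^*) = y^*$ strongly in $V$ along a subsequence; by the single-valuedness of $\mathbf{Q}$ and uniqueness of the limit the whole sequence converges, so $J(\hat y_\rho, u^*) \to J(y^*, u^*)$. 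This uniformly bounds $J(y_\rho^*, u_\rho^*)$, from which the coercive term $\tfrac{\nu}{2}\|u_\rho^*\|_U^2$ yields a uniform bound on $\{u_\rho^*\}$ in $U$; feeding this back into the state estimate of Proposition~\ref{lem:existencePenalisedPDE} bounds $\{y_\rho^*\}$ in $V$. Extracting a subsequence, $u_\rho^* \weaklyto \bar u$ in $U$ (so $u_\rho^* \to \bar u$ in $V^*$ by the compact embedding) and $y_\rho^* \weaklyto \bar y$ in $V$.

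Then I would pass to the limit in the state relation. Since $u_\rho^* \to \bar u$ in $V^*$, Theorem~\ref{thm:penalisedConvergence} applied with $f_\rho = u_\rho^*$ (its hypotheses hold by assumption) yields, along a further subsequence, strong convergence $y_\rho^* \to \bar y$ in $V$ with $\bar y \in \mathbf Q(\bar u)$; single-valuedness gives $\bar y = \mathbf Q(\bar u)$, so $(\bar y, \bar u)$ is feasible for \eqref{eq:ocProblem}. The concluding optimality chain combines weak lower semicontinuity of $J$ with the recovery bound:
\[
J(\bar y, \bar u) \leq \liminf_{\rho \to 0} J(y_\rho^*, u_\rho^*) \leq \limsup_{\rho\to0} J(y_\rho^*, u_\rho^*) \leq \limsup_{\rho\to0} J(\hat y_\rho, u^*) = J(y^*, u^*) = \min \eqref{eq:ocProblem}.
\]
Since $(\bar y, \bar u)$ is feasible, equality holds throughout, so $(\bar y, \bar u)$ is optimal for \eqref{eq:ocProblem} and may be taken as the pair $(y^*, u^*)$ in the statement; moreover the squeeze forces $J(y_\rho^*, u_\rho^*) \to J(y^*, u^*)$, and with $y_\rho^* \to \bar y$ already strong in $V$, the convergence of the $U$-norms together with weak convergence and the (uniform convexity afforded by the) Hilbert/reflexive structure upgrades $u_\rho^* \weaklyto \bar u$ to $u_\rho^* \to \bar u$ strongly in $U$. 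The main obstacle I anticipate is precisely this last upgrade to strong convergence of the controls: it requires extracting convergence of $\|u_\rho^*\|_U$ from convergence of the total cost, which hinges on the already-secured strong convergence $y_\rho^* \to \bar y$ in $V \cts H$ (so that the tracking term passes to the limit) and then on the Radon--Riesz property of $U$; care is needed to ensure $U$ has enough structure (e.g. is a Hilbert space or uniformly convex) for this standard argument to apply, and otherwise the conclusion would have to be read as strong convergence in $V$ and weak convergence in $U$.
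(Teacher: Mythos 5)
Your proof is correct and follows essentially the same route as the paper's: existence of penalised optima via Proposition \ref{lem:existencePenalisedPDE} and the direct method, a recovery bound $J(y_\rho^*,u_\rho^*)\le J(\hat y_\rho,u^*)$ built from Theorem \ref{thm:penalisedConvergence}, passage to the limit in the state relation (reusing steps 3--4 of that theorem's proof) to get feasibility of the weak limit, and a squeeze argument yielding optimality, convergence of the cost, and hence strong convergence of the controls from weak convergence plus norm convergence. Your closing caveat about needing the Radon--Riesz property of $U$ is well taken: the paper glosses over exactly this point (its proof concludes convergence of $u_\rho^*$ ``in $H$'', implicitly relying on Hilbert/uniformly convex structure as in its later stationarity sections where $U\equiv H$), so your version is, if anything, the more careful one.
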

\begin{proof}
First, observe that $\mathbf{P}_\rho(u)$ is non-empty for all $u \in U_{ad}$ by Proposition \ref{lem:existencePenalisedPDE} (after possibly renorming $V$, see Example \ref{eg:generalCase}). Now, let $(y_\rho^*,u_\rho^*)$ denote an optimal pair of \eqref{eq:ocProblemPen}, which exists by standard arguments (like in the proof of Theorem \ref{thm:existenceOC}) making use of \eqref{ass:mrCC}  (to show weak continuity of the solution map). By definition,
\begin{equation}\label{eq:Jeq}
J(y_\rho^*, u_\rho^*) \leq J(w_\rho, u)\quad \forall u \in U_{ad},\quad \forall w_\rho \in \mathbf P_\rho(u).
\end{equation}
Given any $\tilde u \in U_{ad}$, we pick a subsequence $\{\tilde y_{\rho_n}\}$ such that $\mathbf P_{\rho_n}(\tilde u) \ni \tilde y_{\rho_n} \to \tilde y$ where $\tilde y\in \mathbf{Q}(\tilde u)$; this is possible by Theorem \ref{thm:penalisedConvergence}. The inequality \eqref{eq:Jeq} implies that $J(y_{\rho_n}^*, u_{\rho_n}^*)$ is bounded above by $J(\tilde y_{\rho_n}, \tilde u)$ which in turn is bounded uniformly in $\rho_n$ because $\tilde y_{\rho_n}$ is bounded in $V$ by the estimate of Proposition \ref{lem:existencePenalisedPDE}:
\[\norm{y_{\rho_n}^*}{V} \leq C\left(\norm{u_{\rho_n}^*}{V^*} + \norm{v_0}{V}\right).\]
Hence for another subsequence (which we shall relabel)
\begin{align*}
u_{\rho_n}^* &\weaklyto u^*\quad\text{in $U_{ad}$},\\
y_{\rho_n}^* &\weaklyto y^* \quad\text{in $V$},
\end{align*}
for some $(u^*, y^*)$ that we need to show is an optimal pair.
By following steps 3 and 4 in the proof of Theorem \ref{thm:penalisedConvergence}, $y_{\rho_n}^* \to y^* =\mathbf{Q}(u^*)$ in $V$ (since $u_{\rho_n}^* \to u^*$ in $V^*$). Hence $(y^*, u^*)$ is a feasible point of \eqref{eq:ocProblem}. Then observe that for $(\hat y, \hat u)$ being any optimal point of \eqref{eq:ocProblem},
\begin{align*}
J(\hat y, \hat u) &\leq J(y^*, u^*) \leq \liminf_{n \to \infty} J(y_{\rho_n}^*, u_{\rho_n}^*) \leq \limsup_{n \to \infty} J(y_{\rho_n}^*, u_{\rho_n}^*) \leq \limsup_{n \to \infty} J(w_{\rho_n}^*, \hat u) \quad \forall w_{\rho_n}^* \in \mathbf P_{\rho_n}(\hat u)
\end{align*}
with the last inequality by \eqref{eq:Jeq}. Now it becomes necessary for $\mathbf{Q}$ to be single-valued since then, $\hat y = \mathbf{Q}(\hat u)$ and it must be the case that we can select a sequence $\{w_{\rho_n}^*\}$ such that $w_{\rho_n}^* \in \mathbf P_{\rho_n}(\hat u)$ and $w_{\rho_n}^* \to \hat y$ in $V$ (by Theorem \ref{thm:penalisedConvergence}), and 
we find
\[J(\hat y, \hat u) \leq J(y^*, u^*) \leq \lim_{n \to \infty} J(y_{\rho_n}^*, u_{\rho_n}^*) \leq  J(\hat y, \hat u).\]
Because $J(\hat y, \hat u)$ is the minimal value and hence is either independent of $(\hat y, \hat u)$ or uniquely determined by $(\hat y, \hat u)$, the subsequence principle shows that $J(y_\rho^*, u_\rho^*) \to J(\hat y, \hat u)$ (for the entire sequence). Furthermore, the above inequality shows that $(y^*, u^*)$ is optimal and we get $u_\rho^* \to u^*$ in $H$ since we have weak convergence and convergence of the norm.
\end{proof}	
Regarding the assumption in this lemma that $\mathbf{Q}$ is single valued, this is the case if, for example, $\Phi$ is (globally) Lipschitz with Lipschitz constant strictly smaller than $C_a\slash(C_a+C_b)$, see the discussion around \cite[Equation (21)]{AHR}. An alternative condition for uniqueness for QVIs in a specific setting is given in \cite{MR0380554}. 

Let us see how the results of this section change if we do not assume complete continuity of $\Phi \colon V \to V$.
\begin{remark}\label{rem:newTwo}
(1) We can drop \eqref{ass:PhiCC} from Theorem \ref{thm:existenceOC} in favour of the conditions in Theorem \ref{thm:newOne} as long as in the Gelfand triple regime \eqref{ass:gf1} we assume $U \cts H$. Examining the proof of Theorem \ref{thm:existenceOC}, the feasibility of the limit of the infimising sequence follows exactly as in the proof of Theorem \ref{thm:newOne}. 
The Cauchy estimate is not necessary. Weak lower semicontinuity of the norm allows us to retain the final line in the proof. 


\medskip

\noindent (2) If we drop \eqref{ass:PhiCC} from Proposition \ref{lem:convPenOCProblems} in favour of $V \ctsCompact H$  and the conditions in Theorem \ref{thm:newOne} as long as in the Gelfand triple regime \eqref{ass:gf1} we assume $U \cts H$, we would get $y_\rho^* \weaklyto y^*$ in $V$ (i.e., a weak convergence). To see this, we simply need to modify the proof to use Theorem \ref{thm:newOne} instead of Theorem \ref{thm:penalisedConvergence}. The compact embedding into $H$ is needed to bound from above the term $\limsup_{n \to \infty} J(w_{\rho_n}^*, \hat u)$ by $J(\hat y, \hat u)$.


\end{remark}

\section{Stationarity}\label{sec:stationarity}
In this section, we shall derive various forms of necessary conditions satisfied by optimal controls and states. Let us first formally define some concepts of stationarity which are motivated by analogous concepts from the VI case and also by the results that we shall obtain later. 

Let $(y,u)  \in V \times H$ be a solution of the optimal control problem \eqref{eq:ocProblem} where $V \cts H$ with $V$ a reflexive Banach space and $H$ a Hilbert space, $U_{ad} \subset H$ is non-empty and weakly sequentially closed (in the context of the previous section, we have assumed $U \equiv H$).

 Inspired by the results we obtain in \S \ref{sec:weakStationarity} in a general function space setting, we say that $(y, u)$ is a \emph{weak C-stationarity} point of \eqref{eq:ocProblem} if there exists $(p, \xi, \lambda) \in V \times V^* \times V^*$ such that
\begin{equation*}
\begin{aligned}
y + (\Id-\Phi'(y))^*\lambda + A^*p &= y_d,\\
Ay - u + \xi &= 0,\\
\xi \geq 0 \text{ in $V^*$}, \quad y \leq \Phi(y), \quad \langle \xi, y-\Phi(y)\rangle &= 0,\\
u \in U_{ad} : (\nu u - p , u-v )_H &\leq 0\quad \forall v \in U_{ad},\\
\langle \lambda, p \rangle &\geq 0.
\end{aligned}
\end{equation*}
The function $p$ is said to be the \emph{adjoint state} and $\lambda$ is the \emph{Lagrange multiplier} associated to the  adjoint state equation (the first equation above).

Let us now restrict the discussion to when $H=L^2(\Omega)$ on a domain $\Omega \subset \mathbb{R}^n$. Certain sets associated to the lower-level QVI problem in \eqref{eq:ocProblem} are important in stating the following stationarity conditions. Denoting $\xi:= u-Ay$ (see Proposition \ref{lem:complementarityForQVI}), let us formally define then the following sets:
\begin{align*}
\mathcal{A} &:= \{y = \Phi(y)\} \text{ is the \emph{active} (or coincidence) set,}\\
\mathcal{I} &:= \{y < \Phi(y)\} \text{ is the \emph{inactive} set,}\\
\mathcal{A}_s &:= \{\xi > 0\} \text{ is the \emph{strongly active} set,}\\
\mathcal{B} &:= \{y=\Phi(y)\} \cap \{\xi = 0\} \text{ is the \emph{biactive} set}.
\end{align*}
These definitions are merely heuristic due to the (in general) low regularity of $\xi$, see for example \cite[\S 3 and Appendix A]{Wachsmuth} or \cite{MR3796767} for a rigorous approach to define these objects.  

We say that $(y, u) \in V \times H$ is a \emph{C-stationarity} point of \eqref{eq:ocProblem} if $(y,u)$ is a solution of \eqref{eq:ocProblem} and there exists $(p, \xi, \lambda) \in V \times V^* \times V^*$ such that
\begin{subequations}\label{eq:stationaritySystem}
\begin{align}
y + (\Id-\Phi'(y))^*\lambda + A^*p &= y_d,\label{eq:first}\\
Ay - u +  \xi &= 0,\\
\xi \geq 0 \text{ in $V^*$}, \quad y \leq \Phi(y), \quad \langle \xi, y-\Phi(y)\rangle &= 0,\label{eq:pp}\\
u \in U_{ad} : (\nu u - p, u-v)_H &\leq 0 \quad  \forall v \in U_{ad},\\
\langle \xi, p^+ \rangle = \langle \xi, p^- \rangle &= 0\label{eq:pxi}\\
\langle \lambda, p \rangle &\geq 0, \quad \langle \lambda, y-\Phi(y)\rangle = 0,\label{eq:penult}\\
\langle \lambda, v \rangle &= 0 \quad \forall v \in V  : v = 0 \text{ a.e. on $\Omega \setminus \mathcal{I}$}\label{eq:last}.
\end{align}
\end{subequations}
Note that we use the condition \eqref{eq:pxi} in lieu of the more commonly seen condition $p = 0 \text{ a.e. in } \{\xi > 0 \}$ due to the low regularity of $\xi$. 
\begin{remark}
It is worth remarking that in certain works \cite{MR3056408}, rather than the inequality constraint in \eqref{eq:penult}, the stronger condition
\begin{equation}\label{eq:strongerpenult}
\langle \lambda, \psi p \rangle \geq 0 \quad \text{for all sufficiently smooth and non-negative $\psi$}
\end{equation}
is required in order to satisfy C-stationarity; this is a direct analogy of the corresponding (element-wise) condition in the finite dimensional setting in \cite{MR1854317}. We will also  consider the obtainment of \eqref{eq:strongerpenult} in Proposition \ref{prop:altCondition}.
\end{remark}

The condition \eqref{eq:last} is in practice difficult to check due to the fact that in general, $\lambda$ possesses only the low $V^*$ regularity. Therefore, one looks for a weaker concept. In the first instance, for an \emph{almost C-stationarity} point, \eqref{eq:last} is replaced by
\[\langle \lambda, v \rangle = 0 \quad \forall v \in V  : v = 0 \text{ a.e. on $\Omega \setminus \mathcal{I}, \;v|_{\mathcal{I}} \in H^1_0(\mathcal{I})$.}\]
More generally, an \emph{$\mathcal{E}$-almost C-stationarity} point, the concept of which was introduced by Hinterm\"uller and Kopacka in \cite{MR2822818, MR2515801}, satisfies \eqref{eq:first}--\eqref{eq:penult} but now \eqref{eq:last} is replaced with 
\[\forall \tau > 0, \exists E^\tau \subset \mathcal{I} \text{ with } |\mathcal{I} \setminus E^\tau| \leq \tau : \langle \lambda, v \rangle = 0 \quad \forall v \in V  : v = 0 \text{ a.e. on $\Omega \setminus E^\tau$}.
\]
This is a condition that arises from an application of Egorov's theorem as we shall see later.

Now, in the other direction, a point which satisfies \eqref{eq:first}--\eqref{eq:pp} and additionally 
\begin{align*}
p &\geq 0 \quad\text{q.e. on $\mathcal{B}$ and } p=0 \text{ q.e. on $\mathcal{A}_s$},\\
\langle \lambda, v \rangle &\geq 0 \quad \forall v \in V : v \geq 0 \text{ q.e. on $\mathcal{B}$ and } v = 0 \text{ q.e. on $\mathcal{A}_s$},
\end{align*}
is called a \emph{strong stationarity} point, which is typically the most stringent notion of stationarity possible and requires differentiability of the control-to-state map to be obtainable. 

In the proceeding sections, we will show that there exist weak C-stationarity, ($\mathcal{E}$-almost) C-stationarity and strong stationarity points under various assumptions. We will, however, first start in \S \ref{sec:BS} with the so-called \emph{Bouligand stationarity} which is a primal condition and is defined below. It also requires differentiability of $\mathbf{Q}$.


\subsection{Bouligand stationarity}\label{sec:BS}
 In the case where $\mathbf{Q}$ is directionally differentiable from the results of \S \ref{sec:Diff}, we have the following Bouligand stationarity (or B-stationarity) characterisation of the optimal control, see \cite[\S 5]{MR0423155} and \cite[Lemma 3.1]{MR739836} for the VI case. To start, define the {radial cone} of $U_{ad}$ at $u^*$ and the tangent cone respectively by
\[\mathcal{R}_{U_{ad}}(u^*)= \{h \in H : \exists  s^* > 0 \text{ such that } u^*+sh \in U_{ad} \; \;\;\forall s \in [0,s^*]\} \quad\text{and}\quad \mathcal{T}_{U_{ad}}(u^*) := \overline{\mathcal{R}_{U_{ad}}(u^*)}.\]
\begin{prop}[Bouligand stationarity]\label{lem:characterisationOfOC}Let $U_{ad}$ be non-empty and $(y^*, u^*)$ be a local minimiser of \eqref{eq:ocProblem} and let 
the assumptions\footnote{These assumptions should be evaluated locally at $y^*$, of course.} of Theorem \ref{thm:dirDiff1} hold.  Then 
\begin{equation}\label{eq:Bstationarity}
(\alpha_h,y^*- y_d)_H + \nu (u^*, h)_H \geq 0 \quad \forall h \in \mathcal{T}_{U_{ad}}(u^*),
\end{equation}
where $\alpha_h$ is 
the directional derivative 
given uniquely through Theorem \ref{thm:dirDiff1} as the solution of \eqref{eq:QVIforAlpha} with source $h$.
\end{prop}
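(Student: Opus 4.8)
The plan is to establish \eqref{eq:Bstationarity} as a standard first-order necessary condition, exploiting the directional differentiability of $\mathbf{Q}$ furnished by Theorem \ref{thm:dirDiff1}. First I would fix a direction $h \in \mathcal{R}_{U_{ad}}(u^*)$ in the radial cone, so that $u^* + sh \in U_{ad}$ for all small $s > 0$. Since $u^*, h \in H \cts V^*$, Theorem \ref{thm:dirDiff1} applies (with $f = u^*$ and $d = h$) and yields, for $s > 0$ small, an element $y^s \in \mathbf{Q}(u^* + sh) \cap B_R(y^*)$ admitting the expansion $y^s = y^* + s\alpha_h + o(s)$, where $s^{-1}o(s) \to 0$ in $V$ (and hence in $H$ by the embedding $V \cts H$) and $\alpha_h$ solves \eqref{eq:QVIforAlpha} with source $h$. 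In particular $y^s \to y^*$ in $H$ and $u^* + sh \to u^*$, so for $s$ small enough the feasible pair $(y^s, u^* + sh)$ lies in the neighbourhood on which $(y^*, u^*)$ is a minimiser, giving $J(y^*, u^*) \leq J(y^s, u^* + sh)$.

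The core step is then to form the difference quotient $s^{-1}(J(y^s, u^* + sh) - J(y^*, u^*)) \geq 0$ and pass to the limit $s \to 0^+$. Expanding both quadratic terms of $J$ (recalling $U \equiv H$ in this setting) and writing $r(s) := o(s)$, the tracking term contributes $(y^* - y_d, \alpha_h + s^{-1}r(s))_H$ together with a leftover $\tfrac{s}{2}\norm{\alpha_h + s^{-1}r(s)}{H}^2$ that vanishes as $s \to 0^+$, while the control term contributes $\nu(u^*, h)_H$ together with a term of order $s$. Using $s^{-1}r(s) \to 0$ in $H$ and continuity of the inner product, the limit is exactly $(\alpha_h, y^* - y_d)_H + \nu(u^*, h)_H$, so \eqref{eq:Bstationarity} holds for every $h \in \mathcal{R}_{U_{ad}}(u^*)$.

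Finally I would extend the inequality from the radial cone to its closure $\mathcal{T}_{U_{ad}}(u^*)$ by a density argument. Given $h \in \mathcal{T}_{U_{ad}}(u^*)$, pick $h_n \in \mathcal{R}_{U_{ad}}(u^*)$ with $h_n \to h$ in $H$; since $H \cts V^*$, Proposition \ref{lem:alphaUniquenessAndCty} gives $\alpha_{h_n} \to \alpha_h$ in $V$, hence in $H$, and passing to the limit in the inequality for each $h_n$ yields \eqref{eq:Bstationarity} for $h$.

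The proof is largely routine once Theorem \ref{thm:dirDiff1} is in hand; the points requiring care are (i) verifying that the perturbed pair $(y^s, u^* + sh)$ is genuinely admissible and close to $(y^*, u^*)$ so that local optimality can be invoked --- this relies on the existence statement $y^s \in \mathbf{Q}(u^* + sh) \cap B_R(y^*)$ in Theorem \ref{thm:dirDiff1} rather than merely on an abstract derivative --- and (ii) transferring the remainder estimate from $V$ to $H$ and controlling the genuinely quadratic leftover terms in the limit. The extension to the tangent cone is precisely where the continuity result of Proposition \ref{lem:alphaUniquenessAndCty} becomes essential.
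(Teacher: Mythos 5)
Your proposal is correct and follows essentially the same route as the paper's proof: expand $J$ along the admissible pair $(y^s, u^*+sh)$ furnished by Theorem \ref{thm:dirDiff1} for $h$ in the radial cone, use local optimality, divide by $s$ and pass to the limit, then extend to the tangent cone by density via the continuity result of Proposition \ref{lem:alphaUniquenessAndCty}. The points you flag as delicate (admissibility and closeness of the perturbed pair, and the remainder estimate transferring from $V$ to $H$) are exactly the ones the paper handles in the same way.
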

\begin{proof}Take $h$ in the radial cone of $U_{ad}$ at $u^*$ so that it is an admissible direction. Using this direction term, we define $y_s$ as given by Theorem \ref{thm:dirDiff1} after having initially selected $y^* \in \mathbf{Q}(u^*)$. This satisfies $y_s =  y^* + s\alpha_h + o(s)$ where $\alpha_h$ is the directional derivative (uniquely determined thanks to Proposition \ref{prop:strongConvAlphas}) and $o$ is a remainder term. It follows that $(u^*+sh,y_s)$ can be made arbitrarily close to $(u^*,y^*)$ if $s$ is sufficiently small (since $y_s-y^* = s\alpha_h + o(s)$ and the right-hand side tends to zero in $V$). Hence, by definition of local minimiser, we have $J(y_s, u^* + sh) \geq J(y^*, u^*)$ for $s$ sufficiently small. Writing this inequality out, we get
\begin{align*}
0 &\leq \norm{y_s-y_d}{H}^2 + \nu\norm{u^* + sh}{H}^2 - \norm{y^*-y_d}{H}^2 - \nu\norm{u^*}{H}^2\\
&=  \norm{y_s}{H}^2 - \norm{y^*}{H}^2 + 2(y^*-y_s, y_d)_H + \nu  s^2\norm{h}{H}^2 + 2\nu s(u^*, h)_H.
\end{align*}
This leads to
\begin{align*}
0 &\leq \norm{y^* + s\alpha_h + o(s)}{H}^2 - \norm{y^*}{H}^2 - 2(s\alpha_h + o(s), y_d)_H + \nu  s^2\norm{h}{H}^2 + 2\nu s(u^*, h)_H\\
&=\norm{s\alpha_h + o(s)}{H}^2 + 2(s\alpha_h + o(s),y^*- y_d)_H + \nu  s^2\norm{h}{H}^2 + 2\nu s(u^*, h)_H\\
&=s^2\norm{\alpha_h + s^{-1}o(s)}{H}^2 + 2(s\alpha_h + o(s),y^*- y_d)_H + \nu  s^2\norm{h}{H}^2 + 2\nu s(u^*, h)_H.
\end{align*}
Dividing by $s$ and sending to zero, the above yields
\begin{align*}
0 &\leq 2(\alpha_h,y^*- y_d)_H + 2\nu (u^*, h)_H \quad \forall h \in \mathcal{R}_{U_{ad}}(u^*),
\end{align*}
and by density and the continuity result of Proposition \ref{lem:alphaUniquenessAndCty}, also for $h \in \mathcal{T}_{U_{ad}}(u^*).$

\end{proof}

\subsection{Weak C-stationarity}\label{sec:weakStationarity}

In this section we will show a type of weak C-stationarity for the optimal pair by passing to the limit in the stationarity system satisfied by the optimal pair of the PDE regularisation of the QVI. Recall the notations and framework of \S\ref{sec:penalisationForExistence} and \S \ref{sec:penalisedOCProblem} where we studied the convergence of solutions of certain PDEs to a solution of the associated QVI and the associated optimal control problems.   In this section, we again take 
\[\text{$(y^*, u^*)$ to be an arbitrary local minimiser of \eqref{eq:ocProblem}}.\]
In addition to the basic setup of Assumption \ref{ass:seqReg}, we need the following fundamental conditions related to $\Phi$, in which we also recall two assumptions that were stated earlier for the convenience of the reader. 
\begin{ass}\label{ass:commonAssumptions}
Assume that
\begin{align}
&\text{there exists $v_0 \in V$ such that $v_0 \leq \Phi(v)$ for all $v \in V$,}\tag{\ref{ass:feasiblePoint}}
\\	
&\text{$\Phi\colon V \to V$ is completely continuous},\tag{\ref{ass:PhiCC}}\\
&\text{there exists $\epsilon > 0$  such that $\Phi\colon V \to V$ is continuously Fr\'echet differentiable on }  B_\epsilon(y^*),\label{eq:assPhiFrechet}\\
\nonumber &\text{$\mathbf{Q}$ is single valued}.
\end{align}
\end{ass}
We also introduce the following invertibility assumptions; these are stated separately from above since they will come in use later in another section. Note that these types of conditions are also needed in \cite{MR4083198}.
\begin{ass}\label{ass:forEpsAlmostCStat}
Assume that
\begin{align}
&(\Id-\Phi'(z))\colon V \to V \text{ is invertible for $z \in B_\epsilon(y^*)$},\label{ass:newPhiInvInvertible}\\
&A(\Id-\Phi'(z))^{-1}\colon V \to V^* \text{ is uniformly bounded and uniformly coercive in  $z \in B_\epsilon(y^*)$}\label{ass:newAPhiInvCoerciveUniform}.
&
\end{align}
\end{ass}

The main result of this section is the following theorem which shows that local minimisers are weak C-stationarity points. 
\begin{theorem}[Weak C-stationarity]\label{thm:weakStationarity}
Suppose that
\begin{equation}
\text{$U_{ad}$ is non-empty, closed and convex and $V \ctsCompact H \cts V^*$ is a Gelfand triple.}\label{ass:forWeakS}
\end{equation}
In addition to Assumptions \ref{ass:seqReg},  \ref{ass:commonAssumptions} and \ref{ass:forEpsAlmostCStat}, suppose that $m_\rho$ satisfies along with \eqref{ass:mrPenal}--\eqref{ass:mrForFeas} the conditions
\begin{align}
&m_\rho\colon H \to V^*  \text{ is continuous}\label{ass:mrCts}\\
&m_\rho\colon V \to V^*  \text{ is continuously Fr\'echet differentiable}\label{ass:mrDiff}.\end{align}
Then there exist multipliers $(p^*, \xi^*, \lambda^*) \in V \times V^* \times V^*$ satisfying the weak C-stationarity system 
\begin{subequations}\label{eq:weakStationaritySystemAgain}
\begin{align}
y^* + (\Id-\Phi'(y^*))^*\lambda^* + A^*p^* &= y_d,\label{eq:wSS1}\\
Ay^* - u^* + \xi^* &= 0,\label{eq:wSS2}\\
\xi^* \geq 0 \text{ in $V^*$}, \quad y^* \leq \Phi(y^*), \quad \langle \xi^*, y^*-\Phi(y^*)\rangle &= 0,\label{eq:wSS4}\\
u^* \in U_{ad} : (\nu u^* - p^* , u^*-v )_H &\leq 0\quad \forall v \in U_{ad},\label{eq:wSS3}\\
\langle \lambda^*, p^* \rangle &\geq 0.\label{eq:wSS5}
\end{align}
\end{subequations}
\end{theorem}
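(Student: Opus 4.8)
The plan is to derive a first-order optimality system for the penalised control problems \eqref{eq:ocProblemPen} and then to pass to the limit $\rho\to 0$. Since $(y^*,u^*)$ is only a \emph{local} minimiser, I would first localise: fix $r>0$ so that $(y^*,u^*)$ minimises $J$ over $u\in U_{ad}\cap\overline{B_r(u^*)}$ (using that $\mathbf Q$ is single valued, so $y^*=\mathbf Q(u^*)$), and study the adapted problems of minimising $J(y_\rho,u)+\tfrac12\norm{u-u^*}{H}^2$ over this set subject to $y_\rho\in\mathbf P_\rho(u)$. A minor modification of Proposition \ref{lem:convPenOCProblems} then produces penalised minimisers $(y_\rho^*,u_\rho^*)\to(y^*,u^*)$ in $V\times H$, with the ball constraint inactive for small $\rho$ and with the adapted term contributing a quantity that vanishes in the limit.

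Next I would show that the penalised control-to-state map is $C^1$. Writing the state operator as $G_\rho(y,u):=Ay+\tfrac1\rho m_\rho(y-\Phi(y))-u$, its $y$-derivative is $A+\tfrac1\rho m_\rho'(\cdot)(\Id-\Phi'(\cdot))$, which factorises as $[A(\Id-\Phi'(y))^{-1}+\tfrac1\rho m_\rho'(\cdot)](\Id-\Phi'(y))$; by \eqref{ass:newPhiInvInvertible}, \eqref{ass:newAPhiInvCoerciveUniform}, the monotonicity of $m_\rho'$, and the differentiability hypotheses \eqref{eq:assPhiFrechet} and \eqref{ass:mrDiff}, this is boundedly invertible, so the implicit function theorem applies. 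Introducing the adjoint state $p_\rho\in V$ via $(\partial_y G_\rho)^*p_\rho=y_d-y_\rho^*$ and setting $\lambda_\rho:=\tfrac1\rho m_\rho'(y_\rho^*-\Phi(y_\rho^*))^*p_\rho$ and $\xi_\rho:=u_\rho^*-Ay_\rho^*$, the reduced first-order condition yields the penalised analogue of \eqref{eq:weakStationaritySystemAgain}: the adjoint equation $A^*p_\rho+(\Id-\Phi'(y_\rho^*))^*\lambda_\rho=y_d-y_\rho^*$, the state relation, and the control inequality (carrying the extra $u_\rho^*-u^*$ term).

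For the passage to the limit I would establish uniform bounds. The bound $\norm{\xi_\rho}{V^*}\le C$ is immediate from $\xi_\rho=u_\rho^*-Ay_\rho^*$. To bound $p_\rho$, I would test the adjoint equation with $(\Id-\Phi'(y_\rho^*))^{-1}p_\rho$, obtaining the identity in which the penalty contribution $\langle\lambda_\rho,p_\rho\rangle=\tfrac1\rho\langle m_\rho'(\cdot)p_\rho,p_\rho\rangle$ is non-negative by monotonicity while the remaining term dominates $\norm{p_\rho}{V}^2$ via the uniform coercivity of $A(\Id-\Phi'(\cdot))^{-1}$; this gives a uniform $V$-bound on $p_\rho$, and then $\lambda_\rho=((\Id-\Phi'(y_\rho^*))^{-1})^*(y_d-y_\rho^*-A^*p_\rho)$ is bounded in $V^*$ by uniform boundedness of the inverse. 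Extracting weak limits $p_\rho\weaklyto p^*$ in $V$ (hence strongly in $H$ since $V\ctsCompact H$), $\xi_\rho\weaklyto\xi^*$ and $\lambda_\rho\weaklyto\lambda^*$ in $V^*$, equations \eqref{eq:wSS2} and \eqref{eq:wSS4} follow from $y^*=\mathbf Q(u^*)$ and Proposition \ref{lem:complementarityForQVI}; \eqref{eq:wSS1} follows on noting $\Phi'(y_\rho^*)\to\Phi'(y^*)$ in operator norm (continuity of $\Phi'$), which together with $\lambda_\rho\weaklyto\lambda^*$ gives $(\Id-\Phi'(y_\rho^*))^*\lambda_\rho\weaklyto(\Id-\Phi'(y^*))^*\lambda^*$; and \eqref{eq:wSS3} follows from the strong $H$-convergence of $p_\rho$ and $u_\rho^*$ and the vanishing of the adapted term.

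The principal obstacle is the sign condition \eqref{eq:wSS5}, because it pairs the two sequences $\lambda_\rho$ and $p_\rho$ which converge only weakly, so one cannot pass to the limit termwise. I would resolve this using the energy identity obtained by testing the adjoint equation with $(\Id-\Phi'(y_\rho^*))^{-1}p_\rho$,
\[\langle A(\Id-\Phi'(y_\rho^*))^{-1}p_\rho,p_\rho\rangle+\langle\lambda_\rho,p_\rho\rangle=\langle y_d-y_\rho^*,(\Id-\Phi'(y_\rho^*))^{-1}p_\rho\rangle.\]
Setting $q^*:=(\Id-\Phi'(y^*))^{-1}p^*$, the right-hand side converges to $\langle y_d-y^*,q^*\rangle$ (strong–weak pairing, using operator-norm convergence of the inverses), while the first left-hand term is weakly lower semicontinuous since the uniformly coercive operator $A(\Id-\Phi'(\cdot))^{-1}$ induces a convex continuous quadratic form; taking the limit inferior gives $\langle A(\Id-\Phi'(y^*))^{-1}p^*,p^*\rangle+\liminf_\rho\langle\lambda_\rho,p_\rho\rangle\le\langle y_d-y^*,q^*\rangle$. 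On the other hand, pairing the already established limit equation \eqref{eq:wSS1} with $q^*$ yields $\langle y_d-y^*,q^*\rangle=\langle A(\Id-\Phi'(y^*))^{-1}p^*,p^*\rangle+\langle\lambda^*,p^*\rangle$, and subtracting gives $\liminf_\rho\langle\lambda_\rho,p_\rho\rangle\le\langle\lambda^*,p^*\rangle$. As each $\langle\lambda_\rho,p_\rho\rangle\ge 0$, the left-hand side is non-negative, so $\langle\lambda^*,p^*\rangle\ge 0$, which is \eqref{eq:wSS5} and completes the proof.
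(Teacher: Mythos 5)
Your proof is correct and, in its analytic core, coincides with the paper's: the uniform bound on the adjoint states obtained by testing the adjoint equation against $(\Id-\Phi'(y_\rho^*))^{-1}p_\rho$ and using monotonicity of $m_\rho$ together with the uniform coercivity \eqref{ass:newAPhiInvCoerciveUniform}; the weak extraction of $(p^*,\xi^*,\lambda^*)$; and the recovery of \eqref{eq:wSS5} from $\langle\lambda_\rho,p_\rho\rangle\geq 0$ combined with weak lower semicontinuity of $q\mapsto\langle A(\Id-\Phi'(\cdot))^{-1}q,q\rangle$ are exactly the paper's steps --- the weak-convergence and semicontinuity facts you prove inline via operator-norm convergence of the inverses are precisely Lemma \ref{lem:technicalLemma}, and your energy-identity/liminf bookkeeping is an equivalent repackaging of the paper's limsup estimate on $\langle\mu_\rho^*,(\Id-\Phi'(y_\rho^*))^{-1}p_\rho^*\rangle$. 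The one step you do genuinely differently is the derivation of the penalised optimality system: you apply the implicit function theorem to the state operator, using the factorisation $\partial_y G_\rho=\bigl[A(\Id-\Phi'(y))^{-1}+\tfrac1\rho m_\rho'(\cdot)\bigr](\Id-\Phi'(y))$ and Lax--Milgram for bounded invertibility, then differentiate the reduced cost over the convex set $U_{ad}$; the paper instead verifies the Zowe--Kurcyusz constraint qualification (Lemma \ref{lem:RKZPenal}), whose surjectivity requirement \eqref{ass:existenceForPDE} is checked by the very same factorisation and Lax--Milgram argument (Remark \ref{rem:prec}), so the two routes rest on identical input and produce the identical system \eqref{eq:penalisedOCSystem}. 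Note that your reduced-cost argument remains legitimate even though $\mathbf P_\rho$ may a priori be multivalued, because global optimality of $(y_\rho^*,u_\rho^*)$ for the penalised problem implies local optimality of $u_\rho^*$ along the IFT solution branch. Finally, performing the localisation at the outset (proximal term plus ball constraint) rather than at the end, as the paper does, changes nothing: the argument that the ball constraint is eventually inactive, via strong $H$-convergence of the adapted controls, is the same.
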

Here, we have \emph{assumed} the existence of $C^1$ maps $m_\rho$ --- this will have to be verified on a case-by-case basis (we leave the possibility of being able to define such maps satisfying the conditions \eqref{ass:mrPenal}--\eqref{ass:mrForFeas} and \eqref{ass:mrCts}--\eqref{ass:mrDiff} in the general setting to the interested reader, who may find \cite{MR3244144} useful for this purpose). However, let us note that in the most common case of interest where the function spaces involve functions over domains in $\mathbb{R}^n$ with the usual ordering, it is usually possible to construct sufficiently smooth $m_\rho$, see for example \S \ref{sec:penalisationOfQVI}.
\begin{remark}\label{rem:afterWeakStat}
\begin{enumerate}[label=(\roman*)]
\item  We assumed the complete continuity \eqref{ass:PhiCC} to utilise the strong convergence result of Theorem \ref{thm:penalisedConvergence}. It would be interesting to see how the calculations below can be adapted in the case where (we do not have complete continuity and) we only have weak convergence from Theorem \ref{thm:newOne}.

\item Due to the Gelfand triple setup and complete continuity of $\Phi$ here, we find from \eqref{ass:mrCts} that the complete continuity of $m_\rho$ condition \eqref{ass:mrCC} is satisfied.
\item The meaning of  \eqref{ass:newAPhiInvCoerciveUniform} is that for all $z \in B_\epsilon(y^*)$, the operator $A(\Id-\Phi'(z))^{-1}$ has a boundedness constant $C_b'$ and a coercivity constant $C_a'$ both of which are independent of $z$. 
A consequence is that 
\begin{equation}
(\Id-\Phi'(z))^{-1}\colon V \to V \text{ is bounded uniformly for all $z \in B_\epsilon(y^*)$}.\label{ass:newPhiInvBoundedUniform}
\end{equation}
Since $\Phi$ is $C^1$, we automatically  have that $(\Id-\Phi'(z))^{-1}$ is bounded; \eqref{ass:newPhiInvBoundedUniform} clarifies that the bound is uniform.
\end{enumerate}
\end{remark}
Let us proceed with proving this result.
\subsubsection{Stationarity for the penalised optimal control problem}\label{sec:stationPenalisedOC}
Recall the penalised problem \eqref{eq:ocProblemPen} that approximates \eqref{eq:ocProblem}:
\begin{equation}
\min_{u \in U_{ad}} J(y_\rho,u) \quad\text{such that}\quad  Ay_\rho + \frac 1\rho m_\rho(y_\rho-\Phi(y_\rho)) = u.\tag{\ref{eq:ocProblemPen}}
\end{equation}
Under Assumption \ref{ass:commonAssumptions} and \eqref{ass:mrCC}, 
Proposition \ref{lem:convPenOCProblems} is applicable.  For the moment and for purposes of a simpler exposition, let us assume that
\begin{equation}\label{eq:locMinCond}
\text{$(y^*, u^*)$ is the optimal point of \eqref{eq:ocProblem} given in Proposition \ref{lem:convPenOCProblems}}
\end{equation}(we will discard this later on). Via the proposition, we obtain the existence of minimisers $(y_\rho^*, u_\rho^*)$  of \eqref{eq:ocProblemPen} such that
\[(y_\rho^*, u_\rho^*) \to (y^*, u^*) \text{ in $V \times H$.}\]
Thus, for any $\epsilon > 0$, we can find a $\rho_0$ such that $\rho \leq \rho_0$ implies \[y_\rho^* \in B_\epsilon(y^*)\] (this is why it has been possible to formulate most assumptions on $\Phi$ only locally). To derive stationarity conditions for the penalised problem \eqref{eq:ocProblemPen}, we check the Zowe--Kurcyusz constraint qualification \cite{MR526427} (see also the Robinson condition \cite{MR0410522}). To do so, we make the necessary surjectivity assumption \eqref{ass:existenceForPDE} below regarding existence for the linearised equation --- we discuss instances where it holds in Remark \ref{rem:prec}.

\begin{lem}\label{lem:RKZPenal}Assume \eqref{eq:assPhiFrechet}, \eqref{ass:mrDiff}, \eqref{ass:forWeakS} and suppose that 
\begin{align}
&\forall \rho \leq \rho_0,\; \forall f \in V^*,\; \exists z \in V : Az + \frac 1\rho m'_{\rho}(y_\rho^*-\Phi(y_\rho^*))(I-\Phi'(y_\rho^*))(z) = f\label{ass:existenceForPDE}.
\end{align}
Then, for such $\rho$ and any optimal point $(y_\rho^*, u_\rho^*)$ of \eqref{eq:ocProblemPen}, there exists $p_\rho^* \in V$ such that 
\begin{equation}\label{eq:penalisedOCSystem}
\begin{aligned}
A^*p_\rho^* +\frac 1\rho (\Id-\Phi'(y_\rho^*))^*m_\rho'(y_\rho^* - \Phi(y_\rho^*))^*p_\rho^* &=y_d -  y_\rho^*,\\
(\nu u_\rho^* - p_\rho^*, u_\rho^* - v)_H  &\leq 0 \qquad \forall v \in U_{ad}.
\end{aligned}
\end{equation}
\end{lem}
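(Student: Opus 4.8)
The plan is to obtain the first-order optimality conditions for the penalised optimal control problem \eqref{eq:ocProblemPen} by applying a standard Lagrange multiplier theorem for optimisation problems with equality constraints in Banach spaces, namely the Zowe--Kurcyusz framework cited in the text. First I would reformulate \eqref{eq:ocProblemPen} abstractly as minimising $J(y_\rho,u)$ over $(y_\rho,u)\in V\times U_{ad}$ subject to the equality constraint $G_\rho(y_\rho,u)=0$, where
\[
G_\rho(y_\rho,u):=Ay_\rho+\tfrac1\rho m_\rho(y_\rho-\Phi(y_\rho))-u.
\]
Because $\Phi$ is continuously Fr\'echet differentiable on $B_\epsilon(y^*)$ by \eqref{eq:assPhiFrechet}, $m_\rho$ is continuously Fr\'echet differentiable by \eqref{ass:mrDiff}, and $A$ is linear and bounded, the map $G_\rho$ is continuously Fr\'echet differentiable near $(y_\rho^*,u^*)$; its partial derivative in the state direction is
\[
\partial_y G_\rho(y_\rho^*,u^*)(z)=Az+\tfrac1\rho m_\rho'(y_\rho^*-\Phi(y_\rho^*))(\Id-\Phi'(y_\rho^*))(z),
\]
and the partial derivative in the control direction is $-\Id$ on $U$.

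Next I would verify the Zowe--Kurcyusz (equivalently, Robinson) constraint qualification at the optimal point. For an equality constraint whose image space is all of $V^*$ together with the convex set $U_{ad}$, the condition reduces to surjectivity of the linearised constraint operator onto $V^*$ when one is allowed to vary both $z$ and the control in the radial cone of $U_{ad}$; in fact surjectivity in the state variable alone already suffices, and this is exactly what the hypothesis \eqref{ass:existenceForPDE} provides: for every $f\in V^*$ there is $z\in V$ solving $\partial_y G_\rho(y_\rho^*,u^*)(z)=f$. Hence the constraint qualification holds. Applying the Lagrange multiplier theorem then yields a multiplier $p_\rho^*\in V$ (identifying $(V^*)^*$ with $V$ by reflexivity, and using that $m_\rho'$ maps into $V^*$ so its adjoint acts appropriately) such that the Lagrangian is stationary. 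Taking the derivative of the Lagrangian $\mathcal L(y_\rho,u,p_\rho)=J(y_\rho,u)+\langle p_\rho, G_\rho(y_\rho,u)\rangle$ in the state direction $z\in V$ gives the adjoint equation
\[
(y_\rho^*-y_d,z)_H+\langle A^*p_\rho^*,z\rangle+\tfrac1\rho\langle(\Id-\Phi'(y_\rho^*))^* m_\rho'(y_\rho^*-\Phi(y_\rho^*))^* p_\rho^*,z\rangle=0,
\]
which upon identifying the $H$-inner product through the Gelfand embedding is precisely the first line of \eqref{eq:penalisedOCSystem}, while differentiating in the control direction and using the variational inequality form of stationarity over the convex set $U_{ad}$ yields the second line $(\nu u_\rho^*-p_\rho^*,u_\rho^*-v)_H\le 0$ for all $v\in U_{ad}$.

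The main obstacle I anticipate is making the functional-analytic bookkeeping precise: one must check that $\partial_y G_\rho(y_\rho^*,u^*)$ is genuinely a bounded linear operator from $V$ into $V^*$ and that the adjoint multiplier lands in $V$ (not merely in $V^{**}$), which is where reflexivity of $V$ and the regularity assumptions \eqref{eq:assPhiFrechet} and \eqref{ass:mrDiff} are used, together with the fact that the solution $z$ furnished by \eqref{ass:existenceForPDE} lives in $V$. A second subtlety is the correct handling of the $H$-inner product versus the $V^*$--$V$ duality pairing in the adjoint equation: since the tracking term $\tfrac12\|y-y_d\|_H^2$ has $H$-gradient $y_\rho^*-y_d\in H\hookrightarrow V^*$, one uses the Gelfand triple structure \eqref{ass:forWeakS} to regard it as an element of $V^*$, which is exactly what makes the right-hand side $y_d-y_\rho^*$ of the adjoint equation meaningful. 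Beyond these identifications the argument is routine, and I would invoke \cite{MR526427} directly rather than reprove the multiplier theorem.
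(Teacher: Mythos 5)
Your proposal is correct and follows essentially the same route as the paper's proof: both recast \eqref{eq:ocProblemPen} as a constrained minimisation with constraint map $g(y,u)=Ay+\tfrac1\rho m_\rho(y-\Phi(y))-u$, verify the Zowe--Kurcyusz constraint qualification using surjectivity in the state variable alone (which is exactly what \eqref{ass:existenceForPDE} supplies), and then read off the adjoint equation and the control variational inequality from the resulting Lagrange multiplier conditions. The only cosmetic differences are that the paper works with the conical hull $C(x_\rho)$ and the condition $J'(x_\rho)-g'(x_\rho)^*p_\rho^*\in C(x_\rho)^\circ$ (citing a KKT theorem) rather than your Lagrangian-stationarity phrasing, which amounts to the same computation up to a sign convention for $p_\rho^*$.
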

\begin{proof}We introduce the following notation:
\begin{align*}
&X:=V\times H, 
\quad g(x) = g(y,u) := Ay + \frac 1\rho m_\rho(y-\Phi(y)) -u,\\
&x_\rho = (y_\rho^*, u_\rho^*), \quad C(x_\rho) := \{k(v-y_\rho^*, h-u_\rho^*) : v \in V, h \in U_{ad}, k \geq 0\}.
\end{align*}
The map $g\colon X \to V^*$, being a composition of $C^1$ maps, is continuously Fr\'echet differentiable at $x_\rho$ and we must check that $g'(x_\rho)C(x_\rho)
=V^*$, but
since $\tilde C:=V\times \{0\} \subset C(x_\rho)$, it suffices to verify $g'(x_\rho)\tilde C = V^*$.
Observing that 
\begin{align*}
g'(x_\rho)(y,0) = Ay + \frac 1\rho m'_\rho(y_\rho^*-\Phi(y_\rho^*))(y-\Phi'(y_\rho^*)(y)),
\end{align*}
it follows that we need existence for the PDE in \eqref{ass:existenceForPDE} 
and this is guaranteed by assumption for $\rho$ sufficiently small.  
Calculating the adjoint $g'(x_\rho)^*\colon V \to X^*$ of $g'$ via
\begin{align*}
\langle g'(x_\rho)(y,u), v \rangle &= \langle Ay, v\rangle + \frac 1\rho \langle m'_\rho(y_\rho^*-\Phi(y_\rho^*))(y-\Phi'(y_\rho^*)(y)), v \rangle - (u,v)\\
&= \langle y, A^*v\rangle + \frac 1\rho \langle y, (\Id-\Phi'(y_\rho^*))^{*}m'_\rho(y_\rho^*-\Phi(y_\rho^*))^*v\rangle - (v, u),
\end{align*}
we find
\[g'(x_\rho)^*(v) = \left(A^*v + \frac{1}{\rho}(\Id-\Phi'(y_\rho^*))^*m_\rho'(y_\rho^*-\Phi(y_\rho^*))^*v, - v\right).\]
Applying, for example, \cite[Theorem 6.3]{MR2583281}, we get the existence of $p_\rho^* \in V$ such that $J'(x_\rho)-g'(x_\rho)^*p_\rho^* \in C(x_\rho)^\circ$, i.e., for all $k \geq 0$,
\begin{align*}
\langle y_\rho^* - y_d + \frac 1\rho (I-\Phi'(y_\rho^*)) ^*m_\rho'(y_\rho^* - \Phi(y_\rho^*))^*p_\rho^*+ A^*p_\rho^*, k(c_1-y_\rho^*) \rangle &\geq 0\qquad \forall c_1 \in V,\\
(\nu u_\rho^* - p_\rho^*, k(c_2 - u_\rho^*))_H &\geq 0 \qquad \forall c_2 \in U_{ad}.
\end{align*}
As $c_1 \in V$ can be chosen arbitrarily, we find the stated result.
\end{proof}
{\begin{remark}\label{rem:prec} 
The conditions of Assumption \ref{ass:forEpsAlmostCStat} are clearly sufficient to guarantee the surjectivity condition \eqref{ass:existenceForPDE}; and in fact \eqref{ass:newAPhiInvCoerciveUniform} can be replaced with asking for
$A(\Id-\Phi'(z))^{-1}\colon V \to V^* \text{ to be coercive for all $z \in B_\epsilon(y^*)$}.$ 
Indeed, first observe that the bounded inverse theorem guarantees that $A(\Id-\Phi'(z))^{-1}\colon V \to V^*$ is bounded for $z \in B_\epsilon(y^*)$. Now, the equation 
\[A(I-\Phi'(y_\rho^*))^{-1}w + \frac 1\rho m'_{\rho}(y_\rho^*-\Phi(y_\rho^*))w = f\]
has a unique solution $w \in V$  by the Lax--Milgram theorem, leading to existence of $z:=(I-\Phi'(y_\rho^*))^{-1}w \in V$ satisfying the equation in \eqref{ass:existenceForPDE}.

\end{remark}
\subsubsection{Passage to the limit $\rho \to 0$}
Now the objective is to pass to the limit in \eqref{eq:penalisedOCSystem} as $\rho \to 0$ for which we shall need 
some technical results.

\begin{lem}\label{lem:technicalLemma}Under Assumption \ref{ass:forEpsAlmostCStat}, if $z_n \to z$  and $q_n \weaklyto q$  in $V$ with $z_n, z \in B_\epsilon(y^*)$, then
\begin{align}
&(\Id-\Phi'(z_n))^{-1}q_n \weaklyto (\Id-\Phi'(z))^{-1}q \quad\text{in $V$,}\label{eq:otherOne}\\
&\langle A(\Id-\Phi'(z))^{-1}q, q \rangle \leq \liminf_{n \to \infty}\langle A(\Id-\Phi'(z_n))^{-1}q_n, q_n \rangle\label{ass:weakLowerSC}.
\end{align}
The convergence in \eqref{eq:otherOne} is strong if $q_n \to q$ in $V$. 
\end{lem}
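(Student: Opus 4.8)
The plan is to set $w_n := (\Id-\Phi'(z_n))^{-1}q_n$ and $w := (\Id-\Phi'(z))^{-1}q$, and to base everything on the identity obtained by subtracting $(\Id-\Phi'(z))w = q$ from $(\Id-\Phi'(z_n))w_n = q_n$ and rearranging,
\[(\Id-\Phi'(z_n))(w_n - w) = (q_n - q) + (\Phi'(z_n) - \Phi'(z))w.\]
Two facts will be invoked repeatedly: (a) since $\Phi$ is continuously Fr\'echet differentiable on $B_\epsilon(y^*)$ by \eqref{eq:assPhiFrechet}, the map $z \mapsto \Phi'(z)$ is norm-continuous into the space $\mathcal L(V,V)$ of bounded linear operators, so $z_n \to z$ gives $\norm{\Phi'(z_n) - \Phi'(z)}{\mathcal L(V,V)} \to 0$; and (b) the inverses are uniformly bounded by \eqref{ass:newPhiInvBoundedUniform}. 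For the strong convergence assertion (the case of \eqref{eq:otherOne} with $q_n \to q$), I would apply $(\Id-\Phi'(z_n))^{-1}$ to the identity and take norms to get $\norm{w_n - w}{V} \leq C(\norm{q_n-q}{V} + \norm{\Phi'(z_n)-\Phi'(z)}{\mathcal L(V,V)}\norm{w}{V})$, both summands vanishing by (a), (b) and the strong convergence of $q_n$.

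For the weak convergence \eqref{eq:otherOne}, first note that $\{w_n\}$ is bounded in the reflexive space $V$ by (b) and boundedness of $\{q_n\}$. I would extract a weakly convergent subsequence $w_{n_k} \weaklyto \bar w$ and pass to the limit in $q_{n_k} = w_{n_k} - \Phi'(z_{n_k})w_{n_k}$ by splitting $\Phi'(z_{n_k})w_{n_k} = (\Phi'(z_{n_k}) - \Phi'(z))w_{n_k} + \Phi'(z)w_{n_k}$: the first summand tends to $0$ strongly by (a) and boundedness of $\{w_{n_k}\}$, while the second tends weakly to $\Phi'(z)\bar w$ by weak–weak continuity of the fixed bounded operator $\Phi'(z)$. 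This forces $q = (\Id-\Phi'(z))\bar w$, hence $\bar w = w$ by invertibility \eqref{ass:newPhiInvInvertible}; since every weak subsequential limit equals $w$, the full sequence converges weakly to $w$.

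For the lower semicontinuity \eqref{ass:weakLowerSC}, write $B_n := A(\Id-\Phi'(z_n))^{-1}$ and $B := A(\Id-\Phi'(z))^{-1}$. Uniform coercivity \eqref{ass:newAPhiInvCoerciveUniform} makes each $B_n$ a nonnegative form, so $\langle B_n(q_n-q), q_n-q\rangle \geq 0$, which expands to
\[\langle B_n q_n, q_n\rangle \geq \langle B_n q_n, q\rangle + \langle B_n q, q_n\rangle - \langle B_n q, q\rangle.\]
I would then pass to the limit on the right-hand side term by term: $\langle B_n q, q\rangle \to \langle Bq, q\rangle$ and $\langle B_n q, q_n\rangle \to \langle Bq, q\rangle$ follow from the strong convergence $(\Id-\Phi'(z_n))^{-1}q \to (\Id-\Phi'(z))^{-1}q$ (the special case of the strong claim already established) together with continuity of $A$ and $q_n \weaklyto q$; whereas $\langle B_n q_n, q\rangle = \langle A w_n, q\rangle \to \langle Aw, q\rangle = \langle Bq, q\rangle$ uses the weak convergence $w_n \weaklyto w$ from the previous paragraph and weak–weak continuity of $A$. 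As all three terms converge to $\langle Bq, q\rangle$, taking $\liminf$ yields \eqref{ass:weakLowerSC}. The main obstacle is precisely this last part: $B_n$ is genuinely $n$-dependent through $z_n$ and not symmetric, so the three mixed terms must each be treated with the correct mode of convergence, the linchpin being the strong convergence of $(\Id-\Phi'(z_n))^{-1}q$ for fixed $q$, which feeds two of them while the third relies on the weak convergence established first.
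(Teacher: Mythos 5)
Your proof is correct, but its two non-trivial parts are organized differently from the paper's. The paper's entire argument pivots on the single resolvent-type identity $T_n^{-1}-T^{-1} = T_n^{-1}(T-T_n)T^{-1}$ (with $T_n = \Id-\Phi'(z_n)$, $T = \Id-\Phi'(z)$): combined with the uniform bound \eqref{ass:newPhiInvBoundedUniform} and the norm continuity of $z \mapsto \Phi'(z)$, it gives $\norm{(T_n^{-1}-T^{-1})q_n}{V}\to 0$ for the bounded sequence $\{q_n\}$; weak convergence \eqref{eq:otherOne} then follows directly from the decomposition $T_n^{-1}q_n - T^{-1}q = (T_n^{-1}-T^{-1})q_n + T^{-1}(q_n-q)$ with no subsequence extraction, and \eqref{ass:weakLowerSC} follows by replacing $AT_n^{-1}$ with the \emph{fixed} bounded coercive operator $AT^{-1}$ (the difference pairing with $q_n$ vanishes) and invoking the standard weak lower semicontinuity of $v \mapsto \langle AT^{-1}v,v\rangle$. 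You instead establish the strong case first via an equivalent identity, obtain the weak case by reflexivity, subsequence extraction and identification of the limit in the defining equation $q_n = (\Id-\Phi'(z_n))w_n$ (using invertibility \eqref{ass:newPhiInvInvertible} and the subsequence principle), and prove \eqref{ass:weakLowerSC} by expanding the nonnegativity $\langle B_n(q_n-q),q_n-q\rangle \geq 0$ of the $n$-dependent form and computing the three cross terms, bootstrapping from the constant-sequence case of your strong claim. Both routes rest on exactly the same hypotheses (\eqref{eq:assPhiFrechet} for norm continuity of the derivative, plus \eqref{ass:newPhiInvInvertible} and \eqref{ass:newAPhiInvCoerciveUniform}); the paper's is more economical, since one operator-norm estimate feeds all three claims and the varying operator is frozen once and for all, while yours trades that for more generic techniques --- compactness for the weak limit, a term-by-term limit computation for the liminf --- and has the minor virtue of making explicit which mode of convergence each cross term requires, which is exactly the point where a careless argument with a non-symmetric, $n$-dependent operator could go wrong.
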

In order to not disturb the flow of the paper, the proof of this lemma has been placed in Appendix \ref{app:technical}. As an immediate corollary to Lemma \ref{lem:technicalLemma}, for sequences 
$w_\rho \to w$ and $q_\rho \weaklyto q$  in $V$, 
we have
\begin{align}
&\lim_{n \to \infty} (\Id-\Phi'(y^*_\rho))^{-1}w_\rho = (\Id-\Phi'(y^*))^{-1}w \text{ in $V$}\label{ass:doubleCont},\\
&
(y^*, (\Id-\Phi'(y^*))^{-1}q)_H  \leq \liminf_{n \to \infty} (y^*_\rho, (\Id-\Phi'(y^*_\rho))^{-1}q_\rho)_H \label{ass:doubleLSC},\\
& 
(y_d, (\Id-\Phi'(y^*))^{-1}q)_H \geq \limsup_{n \to \infty} (y_d, (\Id-\Phi'(y^*_\rho))^{-1}q_\rho)_H.\label{ass:limsupCond}
\end{align}
We are now ready to conclude.
\begin{proof}[Proof of Theorem \ref{thm:weakStationarity}]First, note that Proposition \ref{lem:complementarityForQVI} directly gives \eqref{eq:wSS4}. Assumption \ref{ass:forEpsAlmostCStat} implies the surjectivity condition \eqref{ass:existenceForPDE} (see Remark \ref{rem:prec}), therefore the stationarity conditions in \eqref{eq:penalisedOCSystem} for the penalised problem are available. 

Now, the weak form of the equation for $p_\rho^*$ is
\begin{align*}
\langle A^*p_\rho^*, \varphi\rangle  +   \frac 1\rho \langle m_\rho'(y_\rho^* - \Phi(y_\rho^*))^*p_\rho^*, (\Id-\Phi'(y_\rho^*))\varphi \rangle &=(y_d-y_\rho^*, \varphi)_H \qquad \forall \varphi \in V.
\end{align*}
By defining $v:=(\Id-\Phi'(y_\rho^*))\varphi$, thanks to the invertibility assumption \eqref{ass:newPhiInvInvertible}, this can be transformed to 
\begin{align*}
\langle A(\Id-\Phi'(y_\rho^*))^{-1}v, p_\rho^* \rangle  +   \frac 1\rho \langle m_\rho'(y_\rho^* - \Phi(y_\rho^*))^*p_\rho^*, v \rangle &=(y_d-y_\rho^*, (\Id-\Phi'(y_\rho^*))^{-1}v)_H\qquad \forall v \in V.
\end{align*}
Selecting $v=p_\rho^*$, using the coercivity \eqref{ass:newAPhiInvCoerciveUniform}, the monotonicity of $m_\rho$ (which implies that $\langle m_\rho'(v)(h),h \rangle \geq 0$ for all $v, h \in V$), Young's inequality with $\gamma>0$ and the uniform boundedness of $(\Id-\Phi'(y_\rho^*))^{-1}$ assured by \eqref{ass:newPhiInvBoundedUniform}, we obtain
\begin{align*}
C_a'\norm{p_\rho^*}{V}^2 \leq C_\gamma\norm{y_d-y_\rho^*}{H}^2 + \gamma\norm{p_\rho^*}{V}^2.
\end{align*}
Selecting $\gamma$ sufficiently small so that the right-most term is absorbed onto the left, we obtain a 
bound on $\{p_\rho^*\}$ independent of $\rho$. This gives rise to 
the convergence (for a subsequence that has been relabelled)
\[p_\rho^* \weaklyto p^* 
\quad \text{in $V$}.\]
Define 
\begin{align*}
\lambda_\rho^* &:= \frac 1\rho m_{\rho}'(y_\rho^*-\Phi(y_\rho^*))^*p_\rho^*,\\
\mu_\rho^* &:= \frac 1\rho (\Id-\Phi'(y_\rho^*))^*m_{\rho}'(y_\rho^*-\Phi(y_\rho^*))^*p_\rho^* = y_d-y_\rho^* - A^*p_\rho^*,\\
\xi_\rho^* &:= \frac 1\rho m_\rho(y_\rho^*-\Phi(y_\rho^*)) = u_\rho^* - Ay_\rho^*,
\end{align*}
the latter two of which, since their right-hand sides converge, satisfy the following convergences both in $V^*$:
\begin{alignat}{3}
\mu_\rho^* &\weaklyto \mu^* :=y_d-y^*-A^*p^*\qquad \text{and} \qquad 
\xi_\rho^* \to \xi^*:= u^*-Ay^*.\label{eq:convMuAndXi}
\end{alignat}
Again using monotonicity of $m_\rho$, 
\begin{align*}
\langle \mu_\rho^*, (\Id-\Phi'(y^*_\rho))^{-1}p_\rho^* \rangle 
&= \frac 1\rho \langle m_{\rho}'(y_\rho^*-\Phi(y_\rho^*))^*p_\rho^*, p_\rho^* \rangle \geq 0,
\end{align*}
and taking the limit superior of this, recalling the definition of $\mu^*$, we obtain
\begin{align*}
0 
&= \limsup_{\rho \to 0} \langle y_d, (\Id-\Phi'(y^*_\rho))^{-1}p_\rho^*\rangle -\liminf_{\rho \to 0} \langle y_\rho^*, (\Id-\Phi'(y^*_\rho))^{-1}p_\rho^*\rangle 
 - \liminf_{\rho \to 0} \langle  A(\Id-\Phi'(y^*_\rho))^{-1}p_\rho^*, p_\rho^*\rangle \\
&\leq   \langle y_d-y^*, (\Id-\Phi'(y^*))^{-1}p^*\rangle -\langle  A(\Id-\Phi'(y^*))^{-1}p^*, p^* \rangle\tag{using the weak semicontinuity results  \eqref{ass:weakLowerSC}, \eqref{ass:doubleLSC} and \eqref{ass:limsupCond}}\\
&= \langle \mu^*, (\Id-\Phi'(y^*))^{-1}p^* \rangle.
\end{align*}
Finally, writing the VI relating $u_\rho^*$ and $p_\rho^*$ in \eqref{eq:penalisedOCSystem} as
\[(\nu u_\rho^*, u_\rho^*-  v)_H - \langle u_\rho^* - v, p_\rho^* \rangle \leq 0 \qquad \forall v \in U_{ad},\]
using the strong convergence of $u_\rho^*$ in $H$ (and hence also in $V^*$) and the weak convergence of $p_\rho^*$ in $V$, we can pass to the limit.

Collecting the results (and recalling that the inverses and adjoints of bounded linear operators commute), we have shown the satisfaction of \eqref{eq:wSS2}--\eqref{eq:wSS3} and 
\begin{align*}
y^* + \mu^* + A^*p^* &= y_d,\\
\langle (\Id-\Phi'(y^*)^*)^{-1}\mu^*, p^* \rangle &\geq 0,
\end{align*}
Setting $\lambda^* := (\Id-\Phi'(y^*)^*)^{-1}\mu^*$ we get the system \eqref{eq:weakStationaritySystemAgain}.

Thus far, we have only shown the existence of a stationarity point and not that every local minimiser is such a point since we assumed \eqref{eq:locMinCond}. Suppose now that $(y^*,u^*)$ is an arbitrary local minimiser (instead of \eqref{eq:locMinCond}) as claimed in the statement of the theorem. 
Denote by $\gamma$ the radius such that $u^*$ is the minimiser on $U_{ad}\cap B^H_\gamma(u^*)$ (the latter object is the closed ball in $H$ of radius $\gamma$ with centre $u^*$). Consider for $\bar J(y_\rho, u) := J(y_\rho,u) + \norm{u-u^*}{H}^2$ the problem
\begin{equation}\label{eq:ocProblemPenBarbu}
\min_{u \in U_{ad} \cap B_\gamma^H(u^*) } \bar J(y_\rho,u) \quad\text{such that}\quad  Ay_\rho + \frac 1\rho m_\rho(y_\rho-\Phi(y_\rho)) = u.
\end{equation}
Denote by $(\bar y_\rho, \bar u_\rho)$ a minimiser of this problem. It follows from $\bar J(\bar y_\rho,  \bar u_\rho) \leq \bar J(y_\rho(u^*), u^*)$ and $\mathbf{P}_\rho(u^*) \ni y_\rho(u^*) \to y^*$ that
\[\limsup_{\rho \to 0}\bar J(\bar y_\rho, \bar u_\rho) \leq J(y^*, u^*).\]
On the other hand, from uniform bounds, we obtain the existence of $\hat u$ such that $\bar u_\rho \weaklyto \hat u$ in $H$ and $\bar y_\rho \to \mathbf{Q}(\hat u) =: \hat y$ in $V$, giving (by the identity $\limsup(a_n) + \liminf(b_n) \leq \limsup(a_n + b_n)$ and using weak lower semicontinuity)
\[\limsup_{\rho \to 0}\bar J(\bar y_\rho, \bar u_\rho) \geq J(\hat y, \hat u) + \limsup_{\rho \to 0}\norm{\bar u_\rho-u^*}{H}^2 \geq J(y^*, u^*) + \limsup_{\rho \to 0}\norm{\bar u_\rho-u^*}{H}^2, \]
with the last inequality because $(y^*, u^*)$ is a local minimiser and $\hat u \in B_\gamma^H(u^*)$. Combining these two inequalities shows that $\hat u = u^*$ and $\bar u_\rho \to u^*$ in $H$. The latter fact implies that for $\rho$ sufficiently small, $\bar u_\rho \in B_\gamma^H(u^*)$ automatically and hence the feasible set in \eqref{eq:ocProblemPenBarbu} can be taken to be just $U_{ad}$. For such $\rho$ (assuming of course that the local conditions in Assumptions \ref{ass:commonAssumptions} and \ref{ass:forEpsAlmostCStat} hold around $y^*$),  the same arguments as above can be used to derive stationarity conditions for \eqref{eq:ocProblemPenBarbu} and in passing to the limit in those conditions, we will find that $(y^*, u^*)$ satisfies the same conditions as above.
\end{proof}
The proof reveals that the stationarity point satisfying \eqref{eq:locMinCond} can be characterised as a limit of the following subsequences (which we have relabelled):
\begin{equation*}
\begin{aligned}
y_\rho^* &\to y^* &&\text{in $V,$}\\
u_\rho^* &\to u^* &&\text{in $H,$}\\
p_\rho^* &\weaklyto p^* &&\text{in $V,$}\\
\rho^{-1}m_\rho(y_\rho^*-\Phi(y_\rho^*)) &\to \xi^*&&\text{in $V^*,$}\\
\rho^{-1}m'_\rho(y_\rho^*-\Phi(y_\rho^*))p_\rho^* &\weaklyto \lambda^*&&\text{in $V^*,$}
\end{aligned}
\end{equation*}
where $(y_\rho^*,u_\rho^*,p_\rho^*)$ are as in Lemma \ref{lem:RKZPenal}. 

\subsection{$\mathcal{E}$-almost C-stationarity}\label{sec:penalisationOfQVI}
We specialise to the case where $H$ is an $L^2$ space on a bounded domain with box constraints, which allows us to improve the weak C-stationarity system.
\begin{ass}\label{ass:forEACS}
Let $\Omega \subset \mathbb{R}^n$ be a bounded Lipschitz domain, set $H:=L^2(\Omega)$ and take $V\in \{H^{1}(\Omega), H^{1}_0(\Omega)\}$  and assume the Gelfand triple $(V,H,V^*)$ structure. Finally, we take $U_{ad}$ to be of the box constraint type
\begin{equation}\label{eq:Uad}
U_{ad} = \{ u \in H: u_a \leq u \leq u_b \text{ a.e. in $\Omega$}\}
\end{equation}
for given functions $u_a, u_b \in H$. 
\end{ass}
The assumption can be generalised, see Remark \ref{rem:genEACS}. 

As before, we denote by 
\[\text{$(y^*, u^*)$ an arbitrary local minimiser of \eqref{eq:ocProblem}}.\]
\begin{theorem}[$\mathcal{E}$-almost C-stationarity]\label{thm:ocPenalisation}
Let Assumptions \ref{ass:commonAssumptions}, \ref{ass:forEpsAlmostCStat} and  \ref{ass:forEACS} hold. 
Then 
there exist multipliers $(p^*, \xi^*, \lambda^*) \in V \times V^* \times V^*$ satisfying the $\mathcal{E}$-almost C-stationarity system
\begin{subequations}\label{eq:epsAlmostStationaritySystem}
\begin{align}
y^* + (\Id-\Phi'(y^*))^*\lambda^* + A^*p^* &= y_d,\label{eq:eafirst}\\
Ay^* - u^* +  \xi &= 0,\\
\xi^* \geq 0 \text{ in $V^*$}, \quad y^* \leq \Phi(y^*), \quad \langle \xi^*, y^*-\Phi(y^*)\rangle &= 0,\\
u^* \in U_{ad} : (\nu u^* - p^*, u^*-v) &\leq 0 \quad  \forall v \in U_{ad},\\
\langle \xi^*, (p^*)^+ \rangle = \langle \xi^*, (p^*)^- \rangle &= 0\label{eq:eaXiP}\\
\langle \lambda^*, p^* \rangle &\geq 0, \quad \langle \lambda^*, y^*-\Phi(y^*)\rangle = 0,\label{eq:eaCs}\\
\forall \tau > 0, \exists E^\tau \subset \mathcal{I} \text{ with } |\mathcal{I} \setminus E^\tau| \leq \tau : \langle \lambda^*, v \rangle &= 0 \quad \forall v \in V  : v = 0 \text{ a.e. on $\Omega \setminus E^\tau$}.\label{eq:eaEa}
\end{align}
\end{subequations}
In addition, 
if $u_a, u_b \in V$ then the optimal control has the regularity $u^* \in V$.
\end{theorem}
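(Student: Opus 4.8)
The strategy is to refine the weak C-stationarity system \eqref{eq:weakStationaritySystemAgain} by exploiting the pointwise structure available in $H=L^2(\Omega)$. First I would fix an explicit family of smooth penalties compatible with Assumption \ref{ass:forEACS}: working in the Gelfand-triple regime of Example \ref{eg:gelfandTripleEg}, take $m_\rho$ to be the Nemytskii operator $m_\rho(v)=(\sigma(v),\cdot)_H$ generated by a fixed convex, nondecreasing $\sigma\in C^1(\mathbb{R})$ with $\sigma\equiv 0$ and $\sigma'\equiv 0$ on $(-\infty,0]$ and $\sigma,\sigma'>0$ on $(0,\infty)$. Such $m_\rho$ verify \eqref{ass:mrPenal}--\eqref{ass:mrForFeas}, \eqref{ass:mrCts} and the $C^1$-regularity \eqref{ass:mrDiff}, so that Lemma \ref{lem:RKZPenal} and Theorem \ref{thm:weakStationarity} apply. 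This at once furnishes multipliers $(p^*,\xi^*,\lambda^*)$ satisfying \eqref{eq:eafirst}, the state equation \eqref{eq:wSS2}, the primal complementarity \eqref{eq:wSS4}, the control VI \eqref{eq:wSS3} and the inequality $\langle\lambda^*,p^*\rangle\geq 0$; Proposition \ref{lem:complementarityForQVI} moreover records $\langle\xi^*,y^*-\Phi(y^*)\rangle=0$. It then remains to establish the genuinely new conditions \eqref{eq:eaXiP}, the orthogonality $\langle\lambda^*,y^*-\Phi(y^*)\rangle=0$ in \eqref{eq:eaCs}, the Egorov condition \eqref{eq:eaEa}, and the control regularity. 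Throughout I would use the limit characterisation of the multipliers recorded after the proof of Theorem \ref{thm:weakStationarity}, writing $w_\rho^*:=y_\rho^*-\Phi(y_\rho^*)$, so that $\xi_\rho^*=\rho^{-1}\sigma(w_\rho^*)$ and $\lambda_\rho^*=\rho^{-1}\sigma'(w_\rho^*)p_\rho^*$ pointwise a.e., with $y_\rho^*\to y^*$ in $V$, $p_\rho^*\weaklyto p^*$ in $V$, $\xi_\rho^*\to\xi^*$ strongly in $V^*$ and $\lambda_\rho^*\weaklyto\lambda^*$ in $V^*$.

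The key preliminary observation is that $w_\rho^*\to w^*:=y^*-\Phi(y^*)$ \emph{strongly} in $V$ (using $y_\rho^*\to y^*$ in $V$ and the complete continuity \eqref{ass:PhiCC} of $\Phi$), whence $\sup(0,w_\rho^*)\to\sup(0,w^*)=0$ in $V$, since $w^*\leq 0$ and the positive part is continuous on $V$. Because $\sigma'\equiv 0$ on $(-\infty,0]$, we have $\lambda_\rho^*=0$ a.e. on $\{w_\rho^*\leq 0\}$, so $\langle\lambda_\rho^*,w_\rho^*\rangle=\langle\lambda_\rho^*,\sup(0,w_\rho^*)\rangle$; pairing the weakly convergent $\lambda_\rho^*$ against the strongly null $\sup(0,w_\rho^*)$ then gives $\langle\lambda^*,w^*\rangle=0$, the second part of \eqref{eq:eaCs}. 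For \eqref{eq:eaEa} I would pass to a subsequence with $w_\rho^*\to w^*$ a.e., fix $\tau>0$, and on the inactive set $\mathcal{I}=\{w^*<0\}$ pick $\delta>0$ and a subset of $\{w^*\leq-\delta\}$ on which Egorov's theorem gives uniform convergence while the discarded part of $\mathcal{I}$ has measure at most $\tau$; calling this set $E^\tau$, uniform convergence forces $w_\rho^*\leq-\delta/2<0$ on $E^\tau$ for $\rho$ small, hence $\lambda_\rho^*=0$ a.e. on $E^\tau$, and testing with any $v\in V$ vanishing off $E^\tau$ and letting $\rho\to0$ yields $\langle\lambda^*,v\rangle=0$.

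The sign conditions \eqref{eq:eaXiP} are the most delicate point. By the compact embedding $V\ctsCompact H$ we have $p_\rho^*\to p^*$ in $H$, so $(p_\rho^*)^\pm\to(p^*)^\pm$ in $H$ and, being $V$-bounded, $(p_\rho^*)^\pm\weaklyto(p^*)^\pm$ in $V$; combined with the strong convergence $\xi_\rho^*\to\xi^*$ in $V^*$ this gives $\langle\xi_\rho^*,(p_\rho^*)^\pm\rangle\to\langle\xi^*,(p^*)^\pm\rangle$, so it suffices to show each penalised pairing tends to $0$. Here I would exploit the convexity inequality $0\leq\sigma(t)\leq t\sigma'(t)$ (valid since $\sigma$ is convex with $\sigma(0)=0$), which yields the pointwise bound $0\leq\xi_\rho^*(p_\rho^*)^+\leq \sup(0,w_\rho^*)\,\rho^{-1}\sigma'(w_\rho^*)(p_\rho^*)^+$, the last factor being the nonnegative positive-part contribution to $\lambda_\rho^*$. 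The plan is then to pair the strongly null $\sup(0,w_\rho^*)\to 0$ in $V$ against this nonnegative piece; since only the full $\lambda_\rho^*$ is a priori $V^*$-bounded (the truncated pieces need not be), this is exactly the low-regularity difficulty and must be carried out carefully, following the template of \cite{MR2822818,MR2515801} — one localises using the sets $E^\tau$ above and the strong decay of $\sup(0,w_\rho^*)$ to bypass the missing $L^2$-bound on the multipliers. The same reasoning applied to $(p_\rho^*)^-$ closes \eqref{eq:eaXiP}.

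Finally, for the regularity claim, the control VI $(\nu u^*-p^*,u^*-v)_H\leq 0$ for all $v\in U_{ad}$ with the box set \eqref{eq:Uad} is equivalent to the pointwise projection formula $u^*=\max(u_a,\min(u_b,\nu^{-1}p^*))$ a.e. in $\Omega$. Since $p^*\in V$ and, by hypothesis, $u_a,u_b\in V$, and since $V\in\{H^1(\Omega),H^1_0(\Omega)\}$ is a vector lattice on which $\max$ and $\min$ map $V\times V$ into $V$, the right-hand side lies in $V$, giving $u^*\in V$. The main obstacle throughout is the derivation of \eqref{eq:eaXiP} (and, relatedly, \eqref{eq:eaEa}) under the mere $V^*$-regularity of $\xi^*$ and $\lambda^*$: the multipliers cannot be controlled in $L^2$, so the pointwise sign information supplied by the penalty must be transferred to the limit through the compact embedding, the convexity bound on $\sigma$, and Egorov-type localisation, rather than by a direct passage to the limit.
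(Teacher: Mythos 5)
Your overall architecture is the paper's: fix a smooth penalty (your $\sigma$-class essentially contains the paper's smoothed max \eqref{eq:mrhoHK}, with fixed rather than $\rho$-dependent smoothing, though you should additionally require $\sigma'$ bounded so that \eqref{ass:mrCts}--\eqref{ass:mrDiff} can hold for the Nemytskii operator), invoke Theorem \ref{thm:weakStationarity}, and upgrade using the limit characterisation of the multipliers. Your arguments for $\langle \lambda^*, y^*-\Phi(y^*)\rangle=0$ in \eqref{eq:eaCs}, for the Egorov condition \eqref{eq:eaEa} (you apply Egorov to $w_\rho^*\to w^*$ and get exact vanishing of $\lambda_\rho^*$ on $E^\tau$, a clean variant of the paper's application of Egorov to $\rho^{-1}m_\rho'(w_\rho^*)\to 0$), and for the regularity $u^*\in V$ via the projection formula are all sound and correspond to the paper's steps 2--4.

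The genuine gap is exactly where you flag it: \eqref{eq:eaXiP}. Your convexity inequality $\sigma(t)\leq t\sigma'(t)$ is the right structural tool, but the proposed resolution --- pairing the strongly null $\sup(0,w_\rho^*)$ against the truncated multiplier $\rho^{-1}\sigma'(w_\rho^*)(p_\rho^*)^+$ after localising on the Egorov sets $E^\tau$ --- cannot close. The obstruction is that $\sigma'(w_\rho^*)$ concentrates, as $\rho\to 0$, on and near the \emph{active} set $\{y^*=\Phi(y^*)\}$, whereas the sets $E^\tau$ live inside the \emph{inactive} set; on $\Omega\setminus E^\tau$ you have neither uniform smallness of $\sup(0,w_\rho^*)$ nor any norm bound on the truncated multiplier (only the full $\lambda_\rho^*$ is $V^*$-bounded, as you yourself observe), so the localisation buys nothing on the region that actually carries the mass. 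The missing ingredient is the uniform \emph{weighted} estimate
\[
\frac1\rho\int_\Omega m_\rho'\left(y_\rho^*-\Phi(y_\rho^*)\right)(p_\rho^*)^2 \leq C,
\]
which the paper extracts by pairing the $V^*$-bounded $\mu_\rho^*=\rho^{-1}(\Id-\Phi'(y_\rho^*))^*m_\rho'(y_\rho^*-\Phi(y_\rho^*))^*p_\rho^*$ with $(\Id-\Phi'(y_\rho^*))^{-1}p_\rho^*$, the latter being uniformly bounded in $V$ by \eqref{ass:newPhiInvBoundedUniform}; this is precisely where Assumption \ref{ass:forEpsAlmostCStat} enters, and it never appears in your argument. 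Once this estimate is available, your convexity inequality plus a weighted Cauchy--Schwarz finishes the step with no localisation at all:
\[
\langle \xi_\rho^*,(p_\rho^*)^+\rangle
=\frac1\rho\int_\Omega \sigma(w_\rho^*)(p_\rho^*)^+
\leq \left(\frac1\rho\int_\Omega w_\rho^*\,\sigma(w_\rho^*)\right)^{1/2}
\left(\frac1\rho\int_\Omega \sigma'(w_\rho^*)\left((p_\rho^*)^+\right)^2\right)^{1/2}
\longrightarrow 0,
\]
since the first factor equals $\langle \xi_\rho^*,w_\rho^*\rangle^{1/2}\to\langle \xi^*,w^*\rangle^{1/2}=0$ by complementarity, and the second is bounded because $((p_\rho^*)^+)^2\leq (p_\rho^*)^2$ pointwise. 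This is, modulo the explicit splitting into the sets $M_1(\rho)$ and $M_2(\rho)$, the paper's step 5; without the weighted bound on the adjoint state, the Egorov-type localisation is not a substitute for it.
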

To prove the theorem, we choose a particular $m_\rho$ (that appeared in the work of Hinterm\"uller and Kopacka \cite{MR2822818} for VIs), namely the superposition operator defined through the real-valued function 
\begin{equation}\label{eq:mrhoHK}
m_\rho(r) \equiv \max_{\epsilon(\rho)}(0,\cdot) := \begin{cases}
0 &: r \leq 0\\
\frac{r^2}{2\epsilon} &: 0 < r < \epsilon\\
r-\frac{\epsilon}{2} &: r \geq \epsilon;
\end{cases}
\end{equation}
here, $\epsilon = \epsilon(\rho) > 0$ is chosen such that $\{\epsilon(\rho)\}$ is bounded. The parameter $\epsilon$ is a smoothing parameter utilised for ensuring differentiability at $0$. 
By \cite[Lemmas 2.83,  2.87, 2.88, 2.90]{CarlNonsmooth} and the fact that $m_\rho \in C^1(\mathbb{R})$ with $m_\rho' \in [0,1]$, we obtain relevant lattice properties  for the spaces involved and differentiability properties for $m_\rho$. That $m_\rho$ satisfies \eqref{ass:mrPenal}, \eqref{ass:mrMonotonicity} and \eqref{ass:mrCts} is clear. Let us check condition \eqref{ass:mrForFeas}. Since $\{\epsilon(\rho)\}$ is bounded, we have (for a subsequence that we relabelled) $\epsilon(\rho) \to \bar \epsilon$ for some $\bar \epsilon\geq 0$ and we get
 \begin{align*}
\norm{\max_{\bar\epsilon}(0, z) - \max_{\epsilon(\rho)}(0, z_\rho)}{V^*} &\leq C\left(\norm{\max_{\bar\epsilon}(0, z) - \max_{\bar\epsilon}(0, z_\rho)}{H}  + \norm{\max_{\bar\epsilon}(0, z_\rho) - \max_{\epsilon(\rho)}(0, z_\rho )}{H}\right)\\
&\leq C\left(\norm{z - z_\rho}{H} + \frac 32|\bar \epsilon - \epsilon(\rho)|\right)\\
&\to 0
\end{align*}
with the final inequality due to Lipschitz properties given in \cite[Lemma 2.1 (v), (vi)]{MR2822818} and the convergence due to the compact embedding $V \ctsCompact H$ . Hence we find $z \leq 0.$ Finally, by the regularity of $m_\rho$ (which has a bounded derivative) we have that $m_\rho\colon H^1(\Omega) \to H$ is $C^1$ (see, e.g. \cite[Proposition 4]{MR2943603}), thus we have \eqref{ass:mrDiff}. This shows that $m_\rho$ is a valid choice.


\begin{remark}\label{rem:genEACS}Assumption \ref{ass:forEACS} can be generalised as follows. 
Let $\Omega \subset \mathbb{R}^n$ be a bounded Lipschitz domain, set $H:=L^2(\Omega)$ and take $V$ to be a separable Hilbert space with $V \ctsCompact H$ and $(V,H,V^*)$ a Gelfand triple. 
We assume that $V$ is such that $(\cdot)^+\colon V \to V$ is continuous and that the superposition operator $m_\rho$ takes $V$ into $H$ with
$m_\rho\colon V \to H$ being $C^1$.

The requirement for the Nemytskii operator to be Fr\'echet differentiable is in general a delicate issue.
\end{remark}
\begin{proof}[Proof of Theorem \ref{thm:ocPenalisation}]
Elements of the proof are similar to that of \cite[Theorem 3.4]{MR2822818} but the more complicated problem structure in this paper requires additional work. 

\medskip

\noindent\textit{1. Weak C-stationarity.} 
Observing that Assumption \ref{ass:forEACS} implies \eqref{ass:forWeakS}, \eqref{ass:mrCts} and \eqref{ass:mrDiff} (as discussed above),  we have the weak C-stationarity result of Theorem \ref{thm:weakStationarity} immediately at hand. 

\medskip

\noindent\textit{2. Regularity of optimal control.} Owing to the characterisation of the VI relating $u_\rho^*$ and $p_\rho^*$ given in \cite[\S II.3]{Kinderlehrer}, thanks to the strong convergence in $H$ of $p_\rho^*$ and continuity of $(\cdot)^+\colon H \to H$, we find that
\[u_\rho^* = \frac 1\nu p_\rho^* + \left(u_a-\frac{p_\rho^*}{\nu}\right)^+-\left(\frac{p_\rho^*}{\nu}-u_b\right)^+ \to \frac 1\nu p^*+ \left(u_a-\frac{p^*}{\nu}\right)^+-\left(\frac{p^*}{\nu}-u_b\right)^+ = u^*.\]
It follows that $u^* \in V$ if $u_a$ and $u_b$ belong to $V$.

\medskip

\noindent\textit{3. Orthogonality condition.} For the condition on $y^*-\Phi(y^*)$ in \eqref{eq:eaCs}, observe that since $m_\rho'$ vanishes on $(-\infty,0]$,
\[\langle \mu_\rho^*, (I-\Phi'(y_\rho^*))^{-1}(y_\rho^* - \Phi(y_\rho^*))^- \rangle = \frac 1\rho\int_\Omega m_{\rho}'(y_\rho^*-\Phi(y_\rho^*))^*p_\rho^* (y_\rho^*-\Phi(y_\rho^*))^-= 0,\]
which, due to the continuity of $(\cdot)^-\colon V \to V$ and the joint sequential continuity result of \eqref{ass:doubleCont} implies that
\[
\langle \mu^*, (\Id-\Phi'(y^*))^{-1}(y^*- \Phi(y^*))^- \rangle =0,\]
and since $y^* \leq \Phi(y^*)$, the negative part above can be dropped. 

\medskip

\noindent\textit{4. $\mathcal{E}$-almost statement.} Since $y_\rho^* \to y^*$ in $V$, $y_\rho^*-\Phi(y_\rho^*) \to y^*-\Phi(y^*)$ pointwise a.e. in $\Omega$ for a subsequence that we do not relabel. Take $x \in \Omega$ such that $y^*(x)-\Phi(y^*)(x) < 0$, then there exists a $\hat \rho=\hat \rho(x)$ such that if $\rho \leq \hat \rho$, then
\[y_\rho(x)-\Phi(y_\rho)(x) \leq \frac12 (y^*(x)-\Phi(y^*)(x)) < 0\]
and hence $\rho^{-1}m_\rho'(y_\rho(x)-\Phi(y_\rho)(x)) = 0$ for $\rho \leq \hat \rho$. That is, $\rho^{-1}m_\rho'(y_\rho(x)-\Phi(y_\rho)(x)) \to 0$ pointwise a.e. on $\{y^* < \Phi(y^*)\}$ and by Egorov's theorem, for every $\tau > 0$, there exists $B^\tau \subset \{y^* < \Phi(y^*)\}$ with $|B^\tau| < \tau$ such that this convergence also holds uniformly on $\{y^* < \Phi(y^*)\}\setminus B^\tau$. 

Take $v \in V$ with $v=0$ a.e. on $\{y^*=\Phi(y^*)\}\cup B^\tau$. By the uniform convergence, for any $\gamma > 0$, there exists $\bar\rho$ such that if $\rho \leq \bar\rho$,
\begin{align*}
\left|\langle \mu_\rho^*, (\Id-\Phi'(y_\rho^*))^{-1}v \rangle\right| = \left|\int_{\{y^* < \Phi(y^*)\} \cap (B^\tau)^c} \frac 1\rho m_\rho'(y_\rho-\Phi(y_\rho))p_\rho^*v\right| \leq \gamma\norm{p_\rho^* v}{L^1(\Omega)}.
\end{align*}
The norm on the right-hand side is bounded uniformly and the left-hand side converges to $|\langle \mu^*, (\Id-\Phi'(y^*))^{-1}v \rangle|$ (thanks to $\mu^*_\rho \weaklyto \mu^*$ in $V^*$ from \eqref{eq:convMuAndXi} and the strong convergence of $(\Id-\Phi'(y_\rho^*))^{-1}v$ in $V$ given by  \eqref{ass:doubleCont}), thus giving 
\begin{align*}
\left|\langle \mu^*, (\Id-\Phi'(y^*))^{-1}v \rangle\right| \leq  C\gamma
\end{align*}
for a constant $C>0$. Since this holds for every $\gamma$, we obtain \eqref{eq:eaEa} (simply set $E^\tau := \mathcal{I}\setminus B^\tau$).

\medskip

\noindent\textit{5. Relation between $\xi^*$ and $p^*$.} 
In order to show the remaining statement \eqref{eq:eaXiP}, 
let us introduce the sets
\[M_1(\rho) := \{ 0 \leq y_\rho^* - \Phi(y_\rho^*) < \epsilon\} \qquad\text{and}\qquad M_2(\rho) := \{y_\rho^* - \Phi(y_\rho^*) \geq \epsilon\}.\]
Since $\langle \xi_\rho^*, y_\rho^*-\Phi(y_\rho^*) \rangle \to \langle \xi^*, y-\Phi(y) \rangle = 0$,  we find
\begin{align}
\nonumber (\xi_\rho^*, y_\rho^* - \Phi(y_\rho^*))&=\frac 1\rho \int_\Omega m_\rho(y_\rho^*-\Phi(y_\rho^*))(y_\rho^* - \Phi(y_\rho^*))\\
&= \frac 1\rho \int_{M_1(\rho)}\frac{(y_\rho^* - \Phi(y_\rho^*))^3}{2\epsilon} + \frac 1\rho\int_{M_2(\rho)}\left(y_\rho^* - \Phi(y_\rho^*)-\frac \epsilon 2\right)(y_\rho^* - \Phi(y_\rho^*))\label{eq:second}\\
\nonumber &\to 0,
\end{align}
and as both integrands in \eqref{eq:second} are non-negative, each integral must individually converge to zero too. Hence
\begin{equation}\label{eq:pr1}
\norm{\frac{\chi_{M_1(\rho)}(y_\rho^* - \Phi(y_\rho^*))^{\frac 32}}{\sqrt{\rho \epsilon}}}{} \to 0\qquad\text{and}\qquad \norm{\frac{\chi_{M_2(\rho)}(y_\rho^* - \Phi(y_\rho^*)-\frac\epsilon 2)}{\sqrt{\rho}}}{} \to 0,
\end{equation}
where for the second convergence we used the fact that $y_\rho^* - \Phi(y_\rho^*) \geq y_\rho^* - \Phi(y_\rho^*)-\epsilon\slash 2 \geq 0$. 
We calculate
\begin{align}
\nonumber \langle \xi_\rho^*, p_\rho^* \rangle &= \frac 1\rho \int_{M_1(\rho)}\frac{(y_\rho^*-\Phi(y_\rho^*))^2}{2\epsilon}p_\rho^*  + \frac 1\rho \int_{M_2(\rho)}\left(y_\rho^*-\Phi(y_\rho^*)-\frac \epsilon 2\right)p_\rho^*\\
\nonumber &=  \frac 12\int_\Omega \chi_{M_1(\rho)}\frac{(y_\rho^*-\Phi(y_\rho^*))^{3\slash 2}}{\sqrt{\rho\epsilon}}\frac{(y_\rho^*-\Phi(y_\rho^*))^{1\slash 2}}{\sqrt{\rho\epsilon}}\chi_{M_1(\rho)}p_\rho^*  + \int_\Omega \frac{\chi_{M_2(\rho)}\left(y_\rho^*-\Phi(y_\rho^*)-\frac \epsilon 2\right)}{\sqrt{\rho}}\frac{\chi_{M_2(\rho)}p_\rho^*}{\sqrt{\rho}}\\
&\leq \frac 12\norm{\chi_{M_1(\rho)}\frac{(y_\rho^*-\Phi(y_\rho^*))^{3\slash 2}}{\sqrt{\rho\epsilon}}}{}\norm{\frac{(y_\rho^*-\Phi(y_\rho^*))^{1\slash 2}}{\sqrt{\rho\epsilon}}\chi_{M_1(\rho)}p_\rho^*}{}  + \norm{\frac{\chi_{M_2(\rho)}\left(y_\rho^*-\Phi(y_\rho^*)-\frac \epsilon 2\right)}{\sqrt{\rho}}}{}\norm{\frac{\chi_{M_2(\rho)}p_\rho^*}{\sqrt{\rho}}}{}.\label{eq:productXiP}
\end{align}
Now, using \eqref{eq:pr1}, the first factor in each term above converges to zero and hence the above right-hand side will converge to zero if we are able to show that the second factor in each term remains bounded.
Since $\mu_\rho^*$ and $(\Id-\Phi'(y_\rho^*))^{-1}p_\rho^*$ are bounded (the latter due to \eqref{ass:newPhiInvBoundedUniform}), so is their duality product, and therefore
\begin{align*}
\nonumber C &\geq |\langle \mu_\rho^*, (\Id-\Phi'(y_\rho^*))^{-1}p_\rho^* \rangle|\\
\nonumber  &=\frac 1\rho \left|\int_\Omega  m_\rho'(y_\rho^*-\Phi(y_\rho^*))(p_\rho^*)^2\right|\\
\nonumber &= \frac 1\rho \left|\int_{M_1(\rho)}\frac{y_\rho^*-\Phi(y_\rho^*)}{\epsilon}(p_\rho^*)^2+ \int_{M_2(\rho)}(p_\rho^*)^2\right|\\
&=\frac{1}{\rho}\int_{\Omega}\chi_{M_1(\rho)}\frac{y_\rho^*-\Phi(y_\rho^*)}{\epsilon}(p_\rho^*)^2 + \frac{1}{\rho}\int_{\Omega}\chi_{M_2(\rho)}(p_\rho^*)^2.
\end{align*}
Both of the terms on the right-hand side are individually bounded uniformly in $\rho$ as the integrands are non-negative. This fact then implies from \eqref{eq:productXiP} that 
\[\langle \xi^*, p^* \rangle = 0.\]
Replacing $p_\rho^*$ by $(p_\rho^*)^+$ in \eqref{eq:productXiP} and in the above calculation, we also obtain in the same way (utilising the fact that $v_n \weaklyto v$ in $V$ implies that $v_n^+ \weaklyto v^+$ in $V$)
\[\langle \xi^*, (p^*)^+ \rangle = 0.\]

\medskip

\noindent\textit{Conclusion.} Finally, setting $\lambda^* := (\Id-\Phi'(y^*)^*)^{-1}\mu^*$, we have shown the desired system
\eqref{eq:epsAlmostStationaritySystem}.
\end{proof}
We conclude this section by showing that the alternative (stronger) condition \eqref{eq:strongerpenult} occasionally used in literature for defining a C-stationarity point can be achieved under additional assumptions.
\begin{prop}[Satisfaction of alternative criterion in C-stationarity]\label{prop:altCondition}For $q_\rho \weaklyto q$ in $V$, under the conditions of 
Theorem \ref{thm:ocPenalisation} and
\begin{equation}\label{ass:wlscForPhiInvForSCS}
\liminf_{n \to \infty} \langle A^*q_\rho, (\Id-\Phi'(y_\rho^*))^{-1}(\psi q_\rho) \rangle  \geq \langle A^*q, (\Id-\Phi'(y^*))^{-1}(\psi q) \rangle\quad \forall \psi \in W^{1,\infty}(\Omega) \text{ with } \psi \geq 0,
\end{equation}
the inequality condition in \eqref{eq:eaCs} can be strengthened to
\[\langle \lambda^*, \psi p^*\rangle \geq 0 \quad \forall \psi \in W^{1,\infty}(\Omega) \text{ with } \psi \geq 0.\]
\end{prop}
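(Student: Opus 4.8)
The plan is to reproduce, almost verbatim, the computation that yielded $\langle \lambda^*, p^*\rangle \geq 0$ in the proof of Theorem \ref{thm:weakStationarity}, but to carry the weight $\psi$ through the test direction. I work at the penalised level with the quantities $p_\rho^*$ and $\mu_\rho^* = \frac1\rho(\Id-\Phi'(y_\rho^*))^*m_\rho'(y_\rho^*-\Phi(y_\rho^*))^*p_\rho^* = y_d - y_\rho^* - A^*p_\rho^*$ produced in Lemma \ref{lem:RKZPenal} and in the proof of Theorem \ref{thm:weakStationarity}. The starting point is to test $\mu_\rho^*$ against $(\Id-\Phi'(y_\rho^*))^{-1}(\psi p_\rho^*)$ instead of $(\Id-\Phi'(y_\rho^*))^{-1}p_\rho^*$. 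Cancelling $(\Id-\Phi'(y_\rho^*))^*$ against the inverse via the adjoint and using that $m_\rho'(y_\rho^*-\Phi(y_\rho^*))$ is a nonnegative multiplication operator, I would obtain
\[
\langle \mu_\rho^*, (\Id-\Phi'(y_\rho^*))^{-1}(\psi p_\rho^*) \rangle = \frac 1\rho \int_\Omega m_\rho'(y_\rho^*-\Phi(y_\rho^*))\, \psi\, (p_\rho^*)^2 \geq 0,
\]
the nonnegativity holding because $m_\rho' \in [0,1]$, $\psi \geq 0$ and $(p_\rho^*)^2 \geq 0$. This is the exact analogue of the monotonicity estimate already exploited for the case $\psi \equiv 1$.

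Next I would split $\mu_\rho^* = y_d - y_\rho^* - A^*p_\rho^*$ and pass to the limit $\rho \to 0$ term by term. For $\psi p_\rho^*$ to be an admissible direction I first record that multiplication by $\psi \in W^{1,\infty}(\Omega)$ is a bounded linear, hence weakly continuous, operator on $V \in \{H^1(\Omega), H^1_0(\Omega)\}$, so $\psi p_\rho^* \weaklyto \psi p^*$ in $V$. Combined with $y_\rho^* \to y^*$ in $V$ and the weak convergence $(\Id-\Phi'(y_\rho^*))^{-1}(\psi p_\rho^*) \weaklyto (\Id-\Phi'(y^*))^{-1}(\psi p^*)$ in $V$ supplied by \eqref{eq:otherOne} of Lemma \ref{lem:technicalLemma}, the compact embedding $V \ctsCompact H$ gives strong convergence of the inverse term in $H$, whence $(y_d - y_\rho^*, (\Id-\Phi'(y_\rho^*))^{-1}(\psi p_\rho^*))_H \to (y_d-y^*, (\Id-\Phi'(y^*))^{-1}(\psi p^*))_H$. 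The remaining contribution is precisely $\langle A^*p_\rho^*, (\Id-\Phi'(y_\rho^*))^{-1}(\psi p_\rho^*)\rangle$, which is exactly the object appearing in hypothesis \eqref{ass:wlscForPhiInvForSCS} with $q_\rho = p_\rho^*$, $q = p^*$.

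Taking the limit superior of the nonnegative quantity and using $\limsup(a_\rho+b_\rho)\le \lim a_\rho + \limsup b_\rho$, the convergent $H$-term contributes its limit while \eqref{ass:wlscForPhiInvForSCS} yields $\limsup(-\langle A^*p_\rho^*, (\Id-\Phi'(y_\rho^*))^{-1}(\psi p_\rho^*)\rangle) \le -\langle A^*p^*, (\Id-\Phi'(y^*))^{-1}(\psi p^*)\rangle$, so that $0 \le \langle \mu^*, (\Id-\Phi'(y^*))^{-1}(\psi p^*)\rangle$. Finally, commuting inverse and adjoint and recalling $\lambda^* = (\Id-\Phi'(y^*)^*)^{-1}\mu^*$ identifies the right-hand side as $\langle \lambda^*, \psi p^*\rangle$, giving the claim. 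The only genuinely delicate step is the passage to the limit in the $A^*$-term, and this is exactly what the newly imposed semicontinuity assumption \eqref{ass:wlscForPhiInvForSCS} is tailored to supply; everything else is a direct transcription of the $\psi\equiv1$ argument, with the minor extra care that $\psi p_\rho^*$ stays in $V$ and converges weakly there.
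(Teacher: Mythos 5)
Your proposal is correct and follows essentially the same route as the paper's proof: testing with $(\Id-\Phi'(y_\rho^*))^{-1}(\psi p_\rho^*)$, extracting nonnegativity of $\frac1\rho\int_\Omega m_\rho'(y_\rho^*-\Phi(y_\rho^*))\psi(p_\rho^*)^2$ from the sign of $m_\rho'$ and $\psi$, and passing to the limit via the decomposition $\mu_\rho^* = y_d - y_\rho^* - A^*p_\rho^*$ with the new hypothesis \eqref{ass:wlscForPhiInvForSCS} handling the $A^*$-term. The only cosmetic difference is that you argue direct convergence of the $H$-term (strong times weak), whereas the paper invokes its semicontinuity statements \eqref{ass:doubleLSC} and \eqref{ass:limsupCond}; these are interchangeable here since $y_\rho^* \to y^*$ strongly.
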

\begin{proof}
Testing the equation for $p_\rho^*$ with $(\Id-\Phi'(y_\rho^*))^{-1}(\psi p_\rho^*) $, noticing that $\psi p_\rho^* \weaklyto \psi p^*$ in $V$ and making use again of \eqref{ass:doubleLSC} and \eqref{ass:limsupCond} in a similar way to the proof of Theorem \ref{thm:weakStationarity},
\begin{align*}
\limsup_{\rho \to 0}\langle \mu_\rho^*, (\Id-\Phi'(y_\rho^*))^{-1}(\psi p_\rho^*) \rangle  
&= \limsup_{\rho \to 0}\langle y_d, (\Id-\Phi'(y_\rho^*))^{-1}(\psi p_\rho^*)\rangle  -\liminf_{\rho \to 0}\langle  y_\rho^*, (\Id-\Phi'(y_\rho^*))^{-1}(\psi p_\rho^*)\rangle\\
&\quad - \liminf_{\rho \to 0}\langle A^*p_\rho^*, (\Id-\Phi'(y_\rho^*))^{-1}(\psi p_\rho^*) \rangle\\
&\leq \langle y_d - y^*, (\Id-\Phi'(y^*))^{-1}(\psi p^*) \rangle  - \langle A^*p, (\Id-\Phi'(y^*))^{-1}(\psi p^*) \rangle\tag{using \eqref{ass:wlscForPhiInvForSCS} for the last term}\\
&= \langle \mu^*, (\Id-\Phi'(y^*))^{-1}(\psi p^*)\rangle\\
&= \langle \lambda^*, \psi p^*\rangle.
\end{align*}
On the other hand, we have
\[ \limsup_{\rho \to 0}\langle \mu_\rho^*, (\Id-\Phi'(y_\rho^*))^{-1}(\psi p_\rho^*) \rangle = \limsup_{\rho \to 0}  \langle \lambda_\rho^*, \psi p_\rho^* \rangle  = \limsup_{\rho \to 0} \int_\Omega m_\rho'(y_\rho^*-\Phi(y_\rho^*))(p_\rho^*)^2\psi\geq 0\]
which implies the result. 
\end{proof}
\begin{remark}
Let us consider when assumption  \eqref{ass:wlscForPhiInvForSCS} of the previous proposition holds.  Suppose that $A$ is of the form 
\begin{equation}\label{eq:ellipticOperator}
\langle Au, v \rangle =  \sum_{i,j=1}^n \int_\Omega a_{ij}\frac{\partial u}{\partial x_i}\frac{\partial v}{\partial x_j} + \sum_{i=1}^n \int_\Omega b_{i}\frac{\partial u}{\partial x_i}v + \int_\Omega c_0 uv\qquad \forall u, v \in V,
\end{equation} with $a_{ij}=a_{ji} \in C^{0,1}(\bar\Omega)$, $b_i \in W^{1,\infty}(\Omega)$, $c_0 \in L^\infty(\Omega)$ and 
\begin{equation}\label{eq:ellipticity}
\sum_{i,j=1}^n a_{ij}\xi_i\xi_j \geq C|\xi|^2 \quad\text{a.e.}
\end{equation}
for some $C>0$ and $c_0 \geq \lambda > 0$ a.e. with $\lambda$ a constant such that $A$ is coercive.  

Taking $\psi$ as in the above proposition, let $z_\rho = (\Id-\Phi'(y_\rho^*))^{-1}(\psi q_\rho)$. By \eqref{eq:otherOne}, $z_\rho \weaklyto z := (\Id-\Phi'(y^*))^{-1}(\psi q^*)$ in $V$.  We have, as done in \cite[Lemma 3.6]{MR3056408} and \cite[Lemma 4.5]{MR3484394},
\begin{align*}
\langle A^*q_\rho, (\Id-\Phi'(y_\rho^*))^{-1}(\psi q_\rho) \rangle &= \langle A^*q_\rho, z_\rho \rangle\\
  &=\langle q_\rho, Az_\rho \rangle \\
&= \sum_{i,j=1}^n \int_\Omega a_{ij} \frac{\partial z_\rho}{\partial x_i}\frac{\partial q_\rho}{\partial x_j}   + \sum_{i=1}^n \int_\Omega b_{i} \frac{\partial z_\rho}{\partial x_i}q_\rho  + \int_\Omega c_0 z_\rho q_\rho.
\end{align*}
Using the convergences $q_\rho \weaklyto q$ and $z_\rho \weaklyto z$ in $V$, the compactness of $V \ctsCompact H$ and the regularity of $\psi$, it is easy to pass to the limit in all but the first term. For that term, we need a weak lower semicontinuity of the form
\[\liminf_{\rho \to 0}\sum_{i,j=1}^n \int_\Omega a_{ij} \frac{\partial ((\Id-\Phi'(y_\rho^*))^{-1}(\psi q_\rho))}{\partial x_i}\frac{\partial q_\rho}{\partial x_j} \geq \sum_{i,j=1}^n \int_\Omega a_{ij} \frac{\partial ((\Id-\Phi'(y))^{-1}(\psi q))}{\partial x_i}\frac{\partial q}{\partial x_j}.\]
A condition ensuring this is the complete continuity of $\Id-\Phi'(y)\colon V \to V$ (examining the proof of Lemma \ref{lem:technicalLemma} shows that this condition would turn the convergence in \eqref{eq:otherOne} into a strong convergence so that $z_\rho \to z$ in $V$ and hence we can directly pass to the limit in that term).

\end{remark}
\subsection{From $\mathcal{E}$-almost to C-stationarity}\label{sec:epsAlmostToC}
In order to upgrade to C-stationarity, we need an additional condition given in the next proposition. The assumption preserves generality but is strong, however, we will explore an example below of a reasonable situation where it holds.
\begin{prop}[C-stationarity]\label{prop:CStationarity}Let the assumptions of Theorem \ref{thm:ocPenalisation} 
 hold and  assume that 
\[y_\rho^* -\Phi(y_\rho^*) \to y^*-\Phi(y^*) \text{ in $L^\infty(\Omega)$}.\]
Then \eqref{eq:eaEa} can be strengthened to
\[\langle \lambda^*, v\rangle =0 \quad \forall v \in V  : v = 0 \text{ a.e. on $\{y^*=\Phi(y^*)\}$}.\]
\end{prop}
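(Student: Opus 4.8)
The plan is to revisit step 4 of the proof of Theorem \ref{thm:ocPenalisation}, keeping the same test‑function machinery but replacing the pointwise‑convergence‑plus‑Egorov argument (which only produces the exceptional sets $B^\tau$, and hence only the $\mathcal{E}$‑almost statement \eqref{eq:eaEa}) by the genuinely uniform control that the new $L^\infty$ hypothesis provides. Concretely, fix $v \in V$ with $v = 0$ a.e.\ on $\mathcal{A} = \{y^* = \Phi(y^*)\}$. I would test the $p_\rho^*$–equation with $(\Id-\Phi'(y_\rho^*))^{-1}v$ and pass to the limit using $\mu_\rho^* \weaklyto \mu^*$ in $V^*$ from \eqref{eq:convMuAndXi} together with the strong convergence $(\Id-\Phi'(y_\rho^*))^{-1}v \to (\Id-\Phi'(y^*))^{-1}v$ in $V$ from \eqref{ass:doubleCont}, exactly as in that step. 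This yields the representation
\[\langle \lambda^*, v\rangle = \langle \mu^*, (\Id-\Phi'(y^*))^{-1}v\rangle = \lim_{\rho \to 0}\frac 1\rho\int_\Omega m_\rho'(y_\rho^*-\Phi(y_\rho^*))\,p_\rho^*\,v,\]
so that everything reduces to showing the right‑hand limit is zero.

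Next I would exploit the $L^\infty$ convergence to localise the penalty. Setting $\eta_\rho := \norm{(y_\rho^*-\Phi(y_\rho^*)) - (y^*-\Phi(y^*))}{L^\infty(\Omega)} \to 0$ and recalling from \eqref{eq:mrhoHK} that $m_\rho'$ vanishes on $(-\infty,0]$, on the set $\{y^*-\Phi(y^*) \leq -\eta_\rho\}$ we have $y_\rho^*-\Phi(y_\rho^*) \leq 0$ and hence $m_\rho'(y_\rho^*-\Phi(y_\rho^*)) = 0$. Combined with $v = 0$ a.e.\ on $\mathcal{A}$, the integrand above is therefore supported on the shrinking tube $S_\rho := \{-\eta_\rho < y^*-\Phi(y^*) < 0\} \subset \mathcal{I}$ about the free boundary, and $|S_\rho| \to 0$ by continuity of measure. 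Applying the Cauchy--Schwarz inequality with the nonnegative weight $\tfrac1\rho m_\rho'(\cdots)$, in the same spirit as \eqref{eq:productXiP}, I would bound
\[\left|\frac 1\rho\int_\Omega m_\rho'(\cdots)p_\rho^* v\right| \leq \left(\frac 1\rho\int_\Omega m_\rho'(\cdots)(p_\rho^*)^2\right)^{1/2}\left(\frac 1\rho\int_{S_\rho} m_\rho'(\cdots)v^2\right)^{1/2},\]
where the first factor is bounded uniformly in $\rho$ by the estimate already established in step 5 of the proof of Theorem \ref{thm:ocPenalisation} (using \eqref{ass:newPhiInvBoundedUniform}).

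It then remains to show that the second factor tends to zero, i.e.\ that the penalty weight $\tfrac1\rho m_\rho'(\cdots)$, which the $L^\infty$ convergence forces to concentrate on the collapsing tube $S_\rho \downarrow \mathcal{A}$, does not see the test function $v$ in the limit, precisely because $v$ vanishes a.e.\ on $\mathcal{A}$. This is the one place where the $L^\infty$ hypothesis is indispensable and, accordingly, the step I expect to be the main obstacle: under mere a.e.\ convergence the tube cannot be controlled uniformly and one is forced to excise the small sets $B^\tau$, recovering only \eqref{eq:eaEa}; the uniform convergence instead pins the support of the multiplier arbitrarily close to $\mathcal{A}$, which is what allows the $v^2$‑weighted integral to be absorbed. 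Once this vanishing is in hand, one concludes $\langle \lambda^*, v\rangle = 0$ for every $v \in V$ with $v = 0$ a.e.\ on $\{y^*=\Phi(y^*)\}$, which is exactly the strengthening of \eqref{eq:eaEa} claimed.
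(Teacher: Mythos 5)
Your reduction is faithful to the paper: the representation $\langle \lambda^*, v\rangle = \lim_{\rho\to 0}\rho^{-1}\int_\Omega m_\rho'(y_\rho^*-\Phi(y_\rho^*))\,p_\rho^*\,v$ obtained from \eqref{eq:convMuAndXi} and \eqref{ass:doubleCont}, the observation that the $L^\infty$ hypothesis makes the weight vanish on $\{y^*-\Phi(y^*)\leq -\eta_\rho\}$, and the uniform bound on $\rho^{-1}\int_\Omega m_\rho'(\cdot)(p_\rho^*)^2$ from step 5 of the proof of Theorem \ref{thm:ocPenalisation} are all correct. But the proof is not complete: the decisive step --- that $\rho^{-1}\int_{S_\rho}m_\rho'(\cdot)\,v^2 \to 0$, where $S_\rho=\{-\eta_\rho<y^*-\Phi(y^*)<0\}$ --- is essentially the whole content of the proposition, and you leave it as an acknowledged conjecture. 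Worse, the mechanism you propose for it (smallness of $|S_\rho|$, i.e.\ ``the support is pinned close to $\mathcal{A}$'') cannot work by itself: on $S_\rho$ the only available pointwise bound is $\rho^{-1}m_\rho'(\cdot)\leq \rho^{-1}$, and $m_\rho'$ can genuinely equal $1$ there because nothing in the hypotheses prevents $\eta_\rho\gg\epsilon(\rho)$ (recall \eqref{eq:mrhoHK}, where $\{\epsilon(\rho)\}$ is only assumed bounded); meanwhile $|S_\rho|\to 0$ at a rate governed by $\eta_\rho$, which is completely decoupled from $\rho$. Hence $\rho^{-1}\int_{S_\rho}v^2$ need not vanish, and the weighted Cauchy--Schwarz step has merely transferred the difficulty to a term that the estimates at your disposal do not control; using the first-factor bound $\rho^{-1}\int m_\rho'(\cdot)(p_\rho^*)^2\leq C$ to dominate the second factor would require comparing $v$ with $p_\rho^*$ on $S_\rho$, which is likewise unavailable.

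The paper's own (two-line) proof rests on a different and stronger mechanism: it asserts that the $L^\infty$ convergence upgrades the pointwise convergence $\rho^{-1}m_\rho'(y_\rho^*-\Phi(y_\rho^*))\to 0$ on $\mathcal{I}$ (from step 4 of the proof of Theorem \ref{thm:ocPenalisation}) to \emph{uniform} convergence on all of $\mathcal{I}$, so that the Egorov excision of $B^\tau$ is superfluous and the original estimate $\left|\rho^{-1}\int_{\mathcal{I}}m_\rho'(\cdot)\,p_\rho^* v\right| \leq \gamma\norm{p_\rho^* v}{L^1(\Omega)}$ holds for every $v\in V$ vanishing a.e.\ only on $\mathcal{A}$. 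There the smallness comes from an $L^\infty$ bound on the \emph{weight}, not from the measure of its \emph{support}; indeed, if you had that uniformity, your second factor would be at most $\gamma\norm{v}{L^2(\Omega)}^2$ and you would be done, so your missing step is essentially equivalent to the claim on which the paper's argument rests. Your difficulty does point at a real subtlety: the asserted uniformity is immediate only on sets $\{y^*-\Phi(y^*)\leq -\delta\}$ with $\delta>0$ fixed (there the weight is eventually exactly zero once $\eta_\rho<\delta$), whereas on the tube $S_\rho$ --- precisely where your argument stalls --- it requires a justification the paper does not spell out. As submitted, however, your attempt is not a proof: it stops exactly at the step that distinguishes C-stationarity from the $\mathcal{E}$-almost statement \eqref{eq:eaEa}.
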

\begin{proof}
By assumption, the convergence of $y_\rho^*-\Phi(y_\rho^*)$ to $y^*-\Phi(y^*)$ is uniform and hence $\rho^{-1}m_\rho'(y_\rho(x)-\Phi(y_\rho)(x)) \to 0$ uniformly a.e.  globally on $\{y^* < \Phi(y^*)\}$. This means that the argument in the proof of Theorem \ref{thm:ocPenalisation} can be repeated without recourse to Egorov's theorem.
\end{proof}
Sobolev embeddings are the most obvious paths to achieve the assumption of the above proposition. We demonstrate this now with an example. Take the dimension $n \leq 4$ and suppose that the (bounded Lipschitz) domain $\Omega$ and operator $A$ are such that
\[y \in H^1_0(\Omega) \cap H^2(\Omega) \implies Ay \in L^2(\Omega)\]
and\footnote{These are elliptic regularity conditions. When $\Omega$ is a $C^{1,1}$ domain and $A$ is of the form \eqref{eq:ellipticOperator} with $a_{ij} \in C^0(\bar\Omega) \cap W^{1,\infty}(\Omega)$, $b_i, c_0 \in L^\infty(\Omega)$, $c_0 \geq 0$ with the strict ellipticity \eqref{eq:ellipticity}, Theorem 9.15 of \cite{Trudinger1983} can be applied and it implies the first condition. The second follows from \cite[Lemma 9.17]{Trudinger1983}.}
\begin{align*}
&y \in H^1_0(\Omega), Ay \in L^2(\Omega) \implies 
\begin{cases} y \in H^2(\Omega),\\
\norm{y}{H^2(\Omega)} \leq C(\norm{y}{L^2(\Omega)} + \norm{Ay}{L^2(\Omega)}).
\end{cases}
\end{align*}
We take $V=H^1_0(\Omega)$ and use the fact that $m_\rho \colon V \to V$ (recall that $m_\rho$ has been chosen in \eqref{eq:mrhoHK}; see \cite[\S 2.2.3]{CarlNonsmooth} for when this type of property could hold for other maps).  Suppose that $\Phi \colon V^* \to V$ is given by the solution mapping of an elliptic equation, i.e., $\Phi(y)$ is defined as the solution $\phi$ of
\[B(\phi) = y\]
where $B$ is a second-order elliptic operator with sufficient properties guaranteeing well posedness in $H^1_0(\Omega)$ (with a continuous dependence estimate), and when $y \in L^2(\Omega)$, in the space $H^2(\Omega) \cap H^1_0(\Omega)$ including a regularity estimate of the form
\[\norm{\phi}{H^2(\Omega)} \leq C\norm{y}{L^2(\Omega)}.\]
Due to this, we immediately have that $\Phi(y_\rho^*) \in H^2(\Omega)$ with a uniform bound:
\begin{equation}\label{eq:egeq1}
\norm{\Phi(y_\rho^*)}{H^2(\Omega)} \leq C_1\norm{y_\rho^*}{L^2(\Omega)} \leq C_2.
\end{equation}
Defining $z=y_\rho^*-\Phi(y_\rho^*) \in H^1_0(\Omega)$, we write the equation for $y_\rho^*$ as
\[Az + \frac 1\rho m_\rho(z) = u_\rho^*-A\Phi(y_\rho^*).\]
It follows from rearranging this equation that $Az \in H$, thus $z \in H^2(\Omega)$ and the equation holds in a pointwise a.e. sense. Suppose for simplicity that $A=-\Delta$ is the Dirichlet Laplacian. Test with $-\Delta z$ and use
\[\int_\Omega m_\rho(z)(-\Delta z) = \int_\Omega m_\rho'(z)|\grad z|^2 \geq 0\] 
to obtain
\[\norm{-\Delta z}{H}^2 \leq \norm{u_\rho^*-A\Phi(y_\rho^*)}{H}\norm{-\Delta z}{H}.\]
Dividing through by $\norm{-\Delta z}{H}$, the resulting right-hand side is bounded due to \eqref{eq:egeq1},  and using
the regularity condition above, we obtain uniform boundedness in $H^2(\Omega)$ of $z=y_\rho^*-\Phi(y_\rho^*)$. By the Sobolev embedding \cite[Theorem 6.3]{Adams} $H^2(\Omega) \ctsCompact C^{0,\alpha}(\bar\Omega)$ for some $\alpha \in (0,1)$, we get $y_\rho^*-\Phi(y_\rho^*) \to y^*-\Phi(y^*)$ in that H\"older space (and thus in $L^\infty(\Omega)$). 

%
\subsection{Strong stationarity}\label{sec:ss}

We now give strong stationarity conditions for \eqref{eq:ocProblem} in the setting of $V=H^1_0(\Omega)$,  $H=L^2(\Omega)$ and $U_{ad}$ of the box constraint form \eqref{eq:Uad}.

Let us first of all provide some background and context. Strong stationarity for the VI obstacle problem in the absence of constraints on the control was the focus of the classical works by Mignot \cite[Theorem 5.2]{MR0423155} and Mignot and Puel \cite{MR739836}. The approach in the latter work is as follows. By using the results on the differentiability of the solution map associated to VIs of Mignot \cite{MR0423155}, the Bouligand stationarity condition (for example, see Proposition \ref{lem:characterisationOfOC}) reads
\[(\alpha_h,y^*- y_d)_H + \nu (u^*, h)_H \geq 0 \quad \forall h \in H\]
where $\alpha_h$ denotes the directional derivative of the solution map with respect to the direction $h$. The key idea of Mignot and Puel in \cite{MR739836} is to use the fact that the optimal control $u^*$ in fact belongs to $V$ (in the unconstrained case, this follows from B-stationarity; otherwise this is a regularity result in certain situations or one may need to simply assume this) and to extend, by continuity, the above inequality to
\begin{equation}\label{eq:1}
(\alpha_h,y^*- y_d)_H + \nu \langle u^*, h \rangle \geq 0 \quad \forall h \in V^*
\end{equation}
so that the set of feasible directions has been enlarged to $V^*$. Then, by writing the duality product in \eqref{eq:1} as $\langle A A^{-1}h , \nu u^*\rangle$ and using properties of the projection operator with respect to the bilinear form generated by $A$ onto the critical cone, it is shown \cite[Theorem 3.3]{MR739836}  that this inequality is equivalent to a strong stationarity system. 

The presence of control constraints complicates the derivation of strong stationarity conditions. In the VI setting, by using the above-mentioned technique of Mignot and Puel of enlarging the set of feasible directions onto the dual space in combination with a fine analysis of the various resulting objects and sets, strong stationarity conditions for VI optimal control problems subject to box constraints were obtained by Wachsmuth in \cite{Wachsmuth}. The author also showed that certain restrictions are required on the control bounds in order to obtain a positive answer for strong stationarity, and counterexamples were given showing that violating those conditions can lead to a lack of strong stationarity. These necessary conditions (which are stated in \eqref{eqass1}--\eqref{eqass3} below) in the context of admissible sets as in \eqref{eq:Uad} are implied \cite[Lemma 5.3]{Wachsmuth} by the condition
\begin{equation*}
\text{$u_a, u_b \in H^1(\Omega)$ with $u_a < 0 \leq u_b \text{ q.e. on $\Omega$},$}
\end{equation*}
(recall Example \ref{eg:dirichletCase} for the meaning of q.e.) which in turn implies that the control space must allow for negative functions, meaning that one ultimately needs existence and directional differentiability results for QVIs with source terms and directions that may be strictly negative\footnote{Our theory of differentiability for QVIs in the earlier paper \cite{AHR}  (which was for non-negative sources and directions) could not be immediately used to obtain strong stationarity by arguing in this fashion since the setting of \cite{AHR} would have forced $U_{ad}$ to be selected such that $U_{ad} \subset H_+$. This is why the development of the results of \S \ref{sec:existence} and \S \ref{sec:Diff} are crucial.}. 

Let $(y^*,u^*)$ be a local optimal pair of \eqref{eq:ocProblem}.  As in \cite{MR739836}, we make the fundamental assumption that  
$u^* \in V$
and we refer to Theorem \ref{thm:ocPenalisation} from the previous section for the satisfaction of this assumption. Let us take $U_{ad}$ as stated in \eqref{eq:Uad} where we include the possibility of taking $u_a=-\infty$ and $u_b = \infty$, in which case the problem becomes one with no constraints and we can argue as in \cite{MR739836}. Outside of this case, we proceed as in \cite{Wachsmuth}. Let the assumptions of  Theorem \ref{thm:dirDiff1} hold and denote by  $j\colon H \to V^*$ the inclusion map through the Riesz isomorphism. Then, as done in \cite{Wachsmuth}, the Bouligand stationarity condition \eqref{eq:Bstationarity} can be extended to 
\begin{equation*}
(\alpha_h,y^*- y_d) + \nu \langle h, u^*\rangle \geq 0 \quad \forall h \in \cl{j\mathcal{T}_{U_{ad}}(u^*)}^{V^*}.
\end{equation*}
This is starting point of the steps leading to the strong stationarity conditions in \cite{Wachsmuth} for the VI case. 

Defining the (quasi-closed) coincidence sets
\begin{align*}
U_a := \{ x \in \Omega : u^*(x) = u_a(x)\}\qquad \text{and}\quad U_b := \{ x \in \Omega : u^*(x) = u_b(x)\}
\end{align*}
and arguing identically to the proof of \cite[Lemma 4.3]{Wachsmuth}, we obtain the following sign conditions on $u^*$:
\begin{align*}
u^* &= 0 \text{ q.e. on $\mathcal{A}_s(y^*) \cap (\Omega \setminus (U_a \cup U_b))$},\\
u^* &\leq 0 \text{ q.e. on $\mathcal{A}_s(y^*) \cap U_b$},\\
u^* &\geq 0 \text{ q.e. on $(\mathcal{A}_s(y^*) \cap U_a)\cup (\mathcal{B}(y^*) \cap (\Omega \setminus U_b))$}
\end{align*}
where $\mathcal{B}(y^*) = \mathcal{A}(y^*) \setminus \mathcal{A}_s(y^*)$ is the \emph{biactive set}. 

Let $\mathrm{cap}(A)$ denote the capacity of a Borel subset $A$ of $\Omega$ with respect to $H^1_0(\Omega)$ (see \cite[Definition 6.47]{MR1756264}). We have the following strong stationarity characterisation, the proof of which involves modifications of \cite{Wachsmuth} and is sketched in Appendix \ref{app:1}. 

\begin{theorem}[Strong stationarity]\label{thm:strongStationarity}
Let $(y^*,u^*)$ be a local minimiser of \eqref{eq:ocProblem} with $u^* \in V$.

Assume Assumption \ref{ass:forDirDiff}, \eqref{ass:PhiCC},  the local assumptions\footnote{These, of course, should be evaluated at $y^*$.}  
\eqref{ass:PhiLipBound}, \eqref{ass:PhiDerivativeLipschitzConstant} and suppose that 
\begin{align}
\nonumber &\text{$\Phi\colon V \to V$ is Fr\`echet differentiable at $y^*$},\\
&\mathrm{cap}(U_a \cap \mathcal{B}(y^*)) = 0\label{eqass1},\\
&u_b \geq 0 \text{ q.e. on $\mathcal{B}(y^*)$}\label{eqass2},\\
&u^* = 0 \text{ q.e. on $\mathcal{A}_s(y^*)$}\label{eqass3}.
\end{align}
Then $(y^*,u^*)$ is a strong stationarity point, i.e., there exist multipliers $(p^*, \xi^*, \lambda^*) \in V\times V^*\times V^*$ such that
\begin{alignat*}{3}
 y^*  +  (\Id-\Phi'(y^*)^*)\lambda^* + A^*p^*  &= y_d,\\
Ay^* - u^* + \xi^* &=0,\\
\xi^* \geq 0 \text{ in $V^*$}, \quad y^* \leq \Phi(y^*), \quad \langle \xi^*, y^*-\Phi(y^*) \rangle&= 0,\\
u^* \in U_{ad} : (\nu u^* - p^* , u^*-v) &\leq 0\quad \forall v \in U_{ad},\\
p^* &\geq 0 \quad\text{q.e. on $\mathcal{B}(y^*)$ and } p^*=0 \text{ q.e. on $\mathcal{A}_s(y^*)$},\\
\langle \lambda^*, v \rangle &\geq 0 \quad \forall v \in V : v \geq 0 \text{ q.e. on $\mathcal{B}(y^*)$ and } v = 0 \text{ q.e. on $\mathcal{A}_s(y^*)$}.
\end{alignat*}
\end{theorem}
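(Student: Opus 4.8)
The plan is to follow the strategy of Mignot--Puel \cite{MR739836} and Wachsmuth \cite{Wachsmuth} for the VI case, after first reducing the QVI governing the directional derivative to a genuine VI. The key structural observation is Proposition \ref{lem:complementarityCharacterisationDerivativeA}: setting $\beta := (\Id-\Phi'(y^*))\alpha_h$, the QVI \eqref{eq:QVIforAlpha} for the derivative $\alpha_h$ in a direction $h$ is equivalent to the variational inequality
\[
\beta \in \mathcal{K}^{y^*} : \langle \tilde A\beta - h, \beta - w\rangle \leq 0 \quad \forall w \in \mathcal{K}^{y^*}, \qquad \tilde A := A(\Id-\Phi'(y^*))^{-1},
\]
with $\alpha_h = (\Id-\Phi'(y^*))^{-1}\beta$ and $\mathcal{K}^{y^*}$ the \emph{fixed} critical cone \eqref{eq:criticalCone}. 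Since $\Phi$ is Fr\'echet differentiable at $y^*$ and Assumption \ref{ass:forEpsAlmostCStat} holds, $\tilde A$ is bounded and coercive, so the VI is uniquely solvable and $\beta$ is given by the VI solution operator associated with the bilinear form $a(u,v):=\langle \tilde A u,v\rangle$. In the Dirichlet-space setting of Example \ref{eg:dirichletCase} the cone simplifies to $\mathcal{K}^{y^*} = \{\psi \in V : \psi \leq 0 \text{ q.e.\ on } \mathcal{A}(y^*),\ \langle \xi^*,\psi\rangle = 0\}$, which is polyhedric; this is exactly the ingredient Wachsmuth's analysis requires, and the reduction is what lets us port the VI theory to our QVI.

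Next I would start from the extended Bouligand condition recorded above the theorem, namely $(\alpha_h,y^*-y_d) + \nu\langle h,u^*\rangle \geq 0$ for all $h \in \overline{j\mathcal{T}_{U_{ad}}(u^*)}^{V^*}$, and substitute $\alpha_h = (\Id-\Phi'(y^*))^{-1}\beta_h$. The adjoint state $p^*$ is introduced through the adjoint of the critical-cone VI solution operator, so that $(\alpha_h,y^*-y_d)$ is represented as a duality pairing of $h$ against an element built from $p^*$; the Lagrange multiplier is then recovered as $\lambda^* := (\Id-\Phi'(y^*)^*)^{-1}\mu^*$ with residual $\mu^* := y_d - y^* - A^*p^*$, precisely the combination producing the term $(\Id-\Phi'(y^*))^*\lambda^*$ in the adjoint equation \eqref{eq:first}. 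The complementarity relations \eqref{eq:pp} for $\xi^*$ follow immediately from Proposition \ref{lem:complementarityForQVI}, and the variational inequality relating $u^*$ and $p^*$ is inherited from the control constraint as in the weak C-stationarity proof.

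The remaining and most delicate task is to extract the sign conditions $p^* \geq 0$ q.e.\ on $\mathcal{B}(y^*)$, $p^* = 0$ q.e.\ on $\mathcal{A}_s(y^*)$, and the corresponding inequality on $\lambda^*$. Here I would argue as in \cite[Lemmas 4.3--4.5]{Wachsmuth}: using polyhedricity of $\mathcal{K}^{y^*}$ one tests the extended inequality against directions $h$ supported on the relevant quasi-closed subsets, decomposing $\Omega$ according to the control coincidence sets $U_a,U_b$ and the sets $\mathcal{A}_s(y^*),\mathcal{B}(y^*)$. The already-established sign conditions on $u^*$, together with the hypotheses $\mathrm{cap}(U_a\cap\mathcal{B}(y^*))=0$ \eqref{eqass1}, $u_b\geq 0$ q.e.\ on $\mathcal{B}(y^*)$ \eqref{eqass2} and $u^*=0$ q.e.\ on $\mathcal{A}_s(y^*)$ \eqref{eqass3}, are exactly what make these test directions admissible, i.e.\ members of $\overline{j\mathcal{T}_{U_{ad}}(u^*)}^{V^*}$. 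The main obstacle I anticipate is pushing Wachsmuth's capacity-based test-function construction through the non-symmetric operator $\tilde A$ and the pullback $\beta\mapsto(\Id-\Phi'(y^*))^{-1}\beta$, ensuring that the adjoint factor $(\Id-\Phi'(y^*))^*$ interacts correctly with the critical-cone projection so that the sign conditions survive the transformation; as in Wachsmuth's counterexamples, assumptions \eqref{eqass1}--\eqref{eqass3} are precisely what prevent strong stationarity from failing on the biactive set.
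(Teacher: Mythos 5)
Your skeleton coincides with the paper's: the same change of variables $\beta_h=(\Id-\Phi'(y^*))\alpha_h$ converting \eqref{eq:QVIforAlpha} into a VI over the fixed critical cone \eqref{eq:criticalCone} with operator $A(\Id-\Phi'(y^*))^{-1}$, the same formula $\lambda^*=(\Id-\Phi'(y^*)^*)^{-1}\mu^*$ for the multiplier, and the same appeal to Wachsmuth's capacity arguments and to \eqref{eqass1}--\eqref{eqass3} for the sign conditions. The genuine gap is in how the multipliers come into existence. You introduce $p^*$ ``through the adjoint of the critical-cone VI solution operator'', so that $(\alpha_h,y^*-y_d)_H$ is rewritten as a pairing linear in $h$. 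But the map $h\mapsto\alpha_h$ (equivalently $h\mapsto\beta_h$) is only positively homogeneous and Lipschitz, not linear, so it has no adjoint; a linear representation of the reduced cost in $h$ is exactly what biactivity obstructs, and this Mignot--Puel device \cite{MR739836} is available only when $U_{ad}$ is the whole space. With box constraints, the paper instead keeps the complementarity structure explicit: it passes to the subspace $W=\{v\in V: v=0 \text{ q.e. on } \mathcal{A}_s(y^*)\}$, in which $\mathcal{K}^{y^*}=i\,\mathcal{C}_W^{y^*}$, recasts Lemma \ref{lem:min1} as a minimisation jointly over $(\beta_h,h)$ subject to $\beta_h\in\mathcal{C}_W^{y^*}$, $h=A_W\beta_h$ and $h\in\cl{i^*j\mathcal{T}_{U_{ad}}(u^*)}^{W^*}$, and produces $(\tilde p,\tilde\lambda,\sigma)$ as Lagrange multipliers via the Zowe--Kurcyusz constraint qualification. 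Verifying that qualification is a substantive step in its own right: one solves an auxiliary VI over $\mathcal{C}_W^{y^*}$ and uses the polar-cone inclusion $-(\mathcal{C}_W^{y^*})^\circ\subset\cl{i^*j\mathcal{T}_{U_{ad}}(u^*)}^{W^*}$, which rests on the manipulations after Lemma 5.1 of \cite{Wachsmuth}. Without this (or an equivalent multiplier-existence argument), the passage from the extended Bouligand condition to the adjoint system with signed $p^*$ and $\lambda^*$ does not go through.

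Two smaller inaccuracies. First, you invoke Assumption \ref{ass:forEpsAlmostCStat} for the boundedness and coercivity of $A(\Id-\Phi'(y^*))^{-1}$, but that assumption is not among the theorem's hypotheses; these properties instead follow from \eqref{ass:PhiDerivativeLipschitzConstant}, since Fr\'echet differentiability at $y^*$ together with $C_L<C_a\slash C_b\leq 1$ makes $\Id-\Phi'(y^*)$ invertible and $A_W$ bounded and coercive (the paper cites \cite[Lemma 3.3]{MR4083198} for this). Second, the role of \eqref{eqass3} is not only to make test directions admissible: it guarantees $u^*\in W$, without which the transformed objective $\nu\langle h,u^*\rangle_{W^*,W}$ in the reduced problem is not even well defined.
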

Note also that, whilst this work was under preparation, a related result has recently been obtained in \cite{MR4083198} however only in the absence of control constraints (i.e., $U_{ad}$ is taken to be the whole space).
\appendix
\section{Technical proofs}\label{app:technical}\begin{proof}[Proof\footnotemark of Lemma \ref{lem:mct}]\footnotetext{We thank Jochen Gl\"uck for the idea of the proof.}
Take an arbitrary subsequence $\{v_{n_j}\}$; this remains uniformly bounded hence we can extract a weakly convergent subsequence such that $v_{n_{j_k}} \weaklyto v$ in $V$ to some $v$.  

Select an arbitrary $f \in V^*_+$ and set $l_n := \langle f, v_n \rangle$ which is a monotonic sequence (since $f$ is non-negative) and also bounded. Hence the monotone convergence theorem applies and we obtain the existence of $l$ such that $l_n \to l$. Since also $l_{n_{j_k}} \to l$, we conclude that $l=\langle f, v \rangle$.

Take another subsequence of $\{v_n\}$, say $\{v_{n_{m}}\}$, then by the above argument, we have $v_{n_{m_j}} \weaklyto \hat v$ for some $\hat v$ and $l=\langle f, \hat v \rangle$. That is, 
\[\langle f, v \rangle = \langle f, \hat v \rangle\quad \forall f \in V^*_+,\]
and from this, we can conclude via the weak-* density of $V^*_+ - V^*_+$ in $V^*$ (e.g., see \cite[Lemma 2.7]{MR2506943})   that $\hat v = v$. The subsequence principle then yields the result.
\end{proof}
\begin{proof}[Proof of Lemma \ref{lem:technicalLemma}]
Define $T_n = (\Id-\Phi'(z_n))$ and $T=(\Id-\Phi'(z))$. Then
\begin{align*}
T_n^{-1}q_n - T^{-1}q = (T_n^{-1}-T^{-1})q_n + T^{-1}(q_n-q)
\end{align*}
and we get $T^{-1}(q_n-q) \weaklyto 0$ in $V$ by continuity and linearity of $T^{-1}$.  For the first term on the right-hand side above, we use the identity $T_n^{-1}-T^{-1} = T_n^{-1}(T-T_n)T^{-1}$ relating the inverses of operators to see that
\begin{align*}
\norm{(T_n^{-1}-T^{-1})q_n}{V} &= \norm{T_n^{-1}(T-T_n)T^{-1}q_n}{V}\\
&\leq C_1\norm{(T-T_n)T^{-1}q_n}{V}\tag{by \eqref{ass:newPhiInvBoundedUniform}}\\
&\leq C_1\norm{T-T_n}{\mathcal{L}(V,V)}\norm{T^{-1}q_n}{V}\\
&\leq C_2\norm{\Phi'(z_n)-\Phi'(z)}{\mathcal{L}(V,V)}\tag{because $T^{-1}$ and $q_n$ are bounded}\\
&\to 0
\end{align*}
with the convergence because we assumed that $\Phi$ is continuously Fr\'echet differentiable and hence the derivative is continuous. Therefore, $T_n^{-1}q_n \weaklyto T^{-1}q$ in $V$. The strong convergence follows because if $q_n \to q$ then $T^{-1}(q_n-q) \to 0$ in $V$.
For the final claim, we have
\begin{align*}
\langle AT_n^{-1}q_n, q_n \rangle - \langle AT^{-1}q, q \rangle &= 
\langle A(T_n^{-1}q_n-T^{-1}q_n), q_n \rangle + \langle AT^{-1}q_n, q_n \rangle - \langle AT^{-1}q, q \rangle
\end{align*}
and the first term on the right-hand side tends to zero by the calculation above. Since by \eqref{ass:newPhiInvBoundedUniform}, $AT^{-1}$ is bounded and coercive (as well as being linear), we obtain
\[\liminf_{n \to \infty} \langle AT^{-1}q_n, q_n \rangle - \langle AT^{-1}q, q \rangle \geq 0.\qedhere\]
\end{proof}
\section{Sketch proof of Theorem \ref{thm:strongStationarity}}\label{app:1}
Recall the notation $\alpha_h$ which stands for the directional derivative in the direction $h$ given through Theorem \ref{thm:dirDiff1}.
\begin{lem}\label{lem:min1}
Denote by 
$j\colon H \to V^*$ the inclusion map. Then $0 \in V^*$ is a minimiser of the problem 
\begin{equation}\label{eq:min2}
\min_{h \in \cl{j\mathcal{T}_{U_{ad}}(u^*)}^{V^*}}(\alpha_h,y^*- y_d)_H + \nu \langle h, u^*\rangle.
\end{equation}
\end{lem}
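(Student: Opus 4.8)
The plan is to exhibit $h=0$ as a feasible point at which the objective of \eqref{eq:min2} vanishes, and then to invoke the (already established) extension of the Bouligand inequality to see that the objective is nonnegative everywhere on the feasible set. Since a feasible point attaining the value $0$ of a functional bounded below by $0$ is automatically a global minimiser, this immediately yields the claim. The whole argument is thus a short bookkeeping exercise resting on three facts: the cone structure of the feasible set, the identification $\alpha_0=0$, and the extended stationarity inequality.

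First I would check feasibility of $h=0$. The tangent cone $\mathcal{T}_{U_{ad}}(u^*)$ is a (closed) cone and hence contains the origin; since $j$ is linear we have $j(0)=0$, and the origin survives the $V^*$-closure, so $0\in\cl{j\mathcal{T}_{U_{ad}}(u^*)}^{V^*}$. Next I would compute $\alpha_0$, the directional derivative in the direction $d=0$. By Theorem \ref{thm:dirDiff1}, $\alpha_0$ is the unique solution of the QVI \eqref{eq:QVIforAlpha} with source $d=0$, i.e. $\alpha\in\mathcal{K}^{y^*}(\alpha):\langle A\alpha,\alpha-v\rangle\le 0$ for all $v\in\mathcal{K}^{y^*}(\alpha)$. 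Recalling from \eqref{eq:criticalCone} that $\mathcal{K}^{y^*}$ is a cone, so that $0\in\mathcal{K}^{y^*}=\mathcal{K}^{y^*}(0)$, the choice $\alpha=0$ satisfies the constraint and the inequality $\langle A0,0-v\rangle=0\le 0$; by uniqueness (Proposition \ref{prop:strongConvAlphas}) this forces $\alpha_0=0$. Consequently the objective at $h=0$ reads $(\alpha_0,y^*-y_d)_H+\nu\langle 0,u^*\rangle=0$.

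Finally I would assemble the pieces. By the extended Bouligand inequality displayed immediately before Theorem \ref{thm:strongStationarity} (itself obtained from Proposition \ref{lem:characterisationOfOC} and the density/continuity afforded by Proposition \ref{lem:alphaUniquenessAndCty}), one has $(\alpha_h,y^*-y_d)_H+\nu\langle h,u^*\rangle\ge 0$ for every $h\in\cl{j\mathcal{T}_{U_{ad}}(u^*)}^{V^*}$, so the objective of \eqref{eq:min2} is bounded below by $0$ on the feasible set, and this bound is attained at the feasible point $h=0$. Hence $0$ minimises \eqref{eq:min2}. I do not anticipate a genuine obstacle here: the only points needing care are the identification $\alpha_0=0$, which relies on the positive homogeneity/uniqueness of the derivative map together with the cone structure of $\mathcal{K}^{y^*}$, and the implicit fact, guaranteed by Proposition \ref{lem:alphaUniquenessAndCty}, that $h\mapsto\alpha_h$ and thus the objective are well defined for directions $h\in V^*$ rather than merely $h\in H$. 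The substantive work of extending Bouligand stationarity to the dual space has already been carried out prior to the theorem, so this lemma is essentially its immediate corollary.
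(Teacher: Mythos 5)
Your proposal is correct and follows essentially the same route as the paper: the paper's proof likewise plugs $h=0$ into the Bouligand stationarity inequality of Proposition \ref{lem:characterisationOfOC}, uses $\alpha_0=0$ to see the objective vanishes there, and then enlarges the feasible set to $\cl{j\mathcal{T}_{U_{ad}}(u^*)}^{V^*}$ via the continuity of $h\mapsto\alpha_h$ from Proposition \ref{lem:alphaUniquenessAndCty} (as in Lemma 4.1 of \cite{Wachsmuth}), which is exactly the content of the ``extended'' inequality you invoke. Your only additions are welcome bookkeeping the paper leaves implicit, namely the feasibility of $h=0$ and the justification of $\alpha_0=0$ via uniqueness of the solution of \eqref{eq:QVIforAlpha}.
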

\begin{proof}
Choosing the direction $h=0$ in the inequality of Proposition \ref{lem:characterisationOfOC} implies
$0 \leq (\alpha_0, y^*- y_d) + \nu (u^*, 0) = 0$ 
with the equality because $\alpha_0 = 0$. Hence $h=0$ is a minimiser of 
\begin{equation*}
\min_{h \in \mathcal{T}_{U_{ad}}(u^*)}(\alpha_h,y^*- y_d)_H + \nu (u^*, h).
\end{equation*}
As in Lemma 4.1 of \cite{Wachsmuth}, the feasible set can be enlarged (the continuity in $V^*$ of $h \mapsto \alpha_h$ assured by Proposition \ref{lem:alphaUniquenessAndCty} is needed here) to obtain the desired result.
\end{proof}
The aim now is to rewrite \eqref{eq:min2} over the space
\[W := \{ v \in V : v=0 \text{ q.e. in $\mathcal{A}_s(y^*)$}\}.\]
Using the characterisation of  the critical cone from \cite[Lemma 3.1]{Wachsmuth}, we see that $\mathcal{K}^{y^*} \subset W$. Denote by  $i\colon W \to V$ the inclusion map 
and define the closed convex set
\[\mathcal{C}_{W}^{y^*}:= \{ v \in W : v \leq 0 \text{ q.e. in $\mathcal{B}(y^*)$}\},\] which satisfies $\mathcal{K}^{y^*} = i\mathcal{C}_{W}^{y^*}$.   Now, note that, using \eqref{ass:PhiDerivativeLipschitzConstant},
$(\Id-\Phi'(y^*))\colon V \to V$ is invertible. 
Define 
\[A_W\colon W \to W^*, \qquad A_W := i^*A(\Id-\Phi'(y^*))^{-1}i\] and observe that for any $\tilde d \in W^*$ the inequality
\[\delta \in \mathcal{C}_{W}^{y^*} : \langle A_W \delta - \tilde d, \delta- w\rangle_{W^*,W} \leq 0 \quad \forall w\in \mathcal{C}_{W}^{y^*}\] 
has a unique solution by the Lions--Stampacchia theorem since $A_W$ is bounded and coercive due to the Lipschitz condition \eqref{ass:PhiDerivativeLipschitzConstant} (see \cite[Lemma 3.3]{MR4083198}). Now suppose that for $d \in V^*$, $\delta$ solves
 \[\delta \in \mathcal{C}_{W}^{y^*} : \langle A_W \delta - i^*d, \delta- w\rangle_{W^*,W} \leq 0 \quad \forall w\in \mathcal{C}_{W}^{y^*}.\] 
Consider also 
\[z \in \mathcal{K}^{y^*} : \langle A(\Id-\Phi'(y^*))^{-1}z - d, z-v\rangle_{V^*,V} \leq 0 \quad \forall v \in K^{y^*}.\]
Then it is easy to see that that $z=i\delta$. 

\begin{lem}
Define the operator 
$\theta \colon W \to V$ by 
\[\theta := (\Id-\Phi'(y^*))^{-1}i.\] 
Then $(0,0)$ is a solution of
\begin{equation}\label{eq:compEqn}
\begin{aligned}
\min_{(\beta_h, h) \in W\times W^* }&(\theta(\beta_h),y^*- y_d)_H + \nu \langle h, u^*\rangle_{W^*, W} \text{ s.t. }\\
&\begin{cases}
\beta_h \in \mathcal{C}_{W}^{y^*}\\
h = A_{W}\beta_h\\
h \in \cl{i^*j\mathcal{T}_{U_{ad}}(u^*)}^{W^*}.
\end{cases}
\end{aligned}
\end{equation}
\end{lem}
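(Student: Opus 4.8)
The plan is to prove the two standard ingredients of optimality: that $(0,0)$ is feasible with objective value $0$, and that every feasible pair yields a nonnegative objective. Feasibility of $(0,0)$ is immediate, since $0\in\mathcal{C}_{W}^{y^*}$, $A_W0=0\in\cl{i^*j\mathcal{T}_{U_{ad}}(u^*)}^{W^*}$ (tangent cones contain the origin), and $\theta$ is linear so the objective there equals $0$. It therefore suffices to show
$(\theta(\beta),y^*-y_d)_H+\nu\langle h,u^*\rangle_{W^*,W}\geq 0$
for every feasible $(\beta,h)$, and I would obtain this by transporting the Bouligand-stationarity inequality of Lemma \ref{lem:min1} from $V^*$ down to $W^*$.

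First I would establish the correspondence between the constraints of \eqref{eq:compEqn} and the directional-derivative VI. If $(\beta,h)$ is feasible, then $h=A_W\beta$ with $\beta\in\mathcal{C}_{W}^{y^*}$, so $\langle A_W\beta-h,\beta-w\rangle_{W^*,W}=0$ for every $w$; that is, $\beta$ solves the variational inequality over $\mathcal{C}_{W}^{y^*}$ with source $h$, and by the unique solvability recorded before the statement ($A_W$ bounded and coercive, Lions--Stampacchia), $\beta=\beta_h$ is precisely that solution. Moreover, recalling that the QVI \eqref{eq:QVIforAlpha} for a direction $\tilde h\in V^*$ is equivalent, via $z:=(\Id-\Phi'(y^*))\alpha\in\mathcal{K}^{y^*}$ together with $\mathcal{K}^{y^*}=i\mathcal{C}_{W}^{y^*}$, to the same $W$-variational inequality with source $i^*\tilde h$, one gets $\alpha_{\tilde h}=\theta(\beta_{i^*\tilde h})$. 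Finally, assumption \eqref{eqass3} gives $u^*\in W$, whence $\langle\tilde h,u^*\rangle_{V^*,V}=\langle i^*\tilde h,u^*\rangle_{W^*,W}$. Thus the objective of \eqref{eq:compEqn} at $(\beta_{i^*\tilde h},i^*\tilde h)$ coincides exactly with the Bouligand functional of Lemma \ref{lem:min1} at $\tilde h$.

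Next I would combine this identification with a density-and-limit argument. Given feasible $(\beta,h)$, I pick $g_n\in\mathcal{T}_{U_{ad}}(u^*)$ with $h_n:=i^*jg_n\to h$ in $W^*$, which is possible since $h\in\cl{i^*j\mathcal{T}_{U_{ad}}(u^*)}^{W^*}$. Applying Lemma \ref{lem:min1} to each admissible direction $jg_n$ and invoking the correspondence above yields $(\theta(\beta_{h_n}),y^*-y_d)_H+\nu\langle h_n,u^*\rangle_{W^*,W}\geq 0$. The solution map $W^*\to W$, $h\mapsto\beta_h$, is Lipschitz (once more by Lions--Stampacchia with continuous dependence on the source, exactly as in the proof of Proposition \ref{lem:alphaUniquenessAndCty}), so $\beta_{h_n}\to\beta_h=\beta$ in $W$ and hence $\theta(\beta_{h_n})\to\theta(\beta)$ in $V\cts H$; together with $h_n\to h$ in $W^*$ this permits passing to the limit and gives the desired nonnegativity. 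Since the value at $(0,0)$ is $0$, this shows $(0,0)$ is a minimiser.

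The main obstacle is the bookkeeping in the middle step: one must carefully verify that the reparametrised constraint $h=A_W\beta$ genuinely reproduces the VI solution map (so that $\theta(\beta)$ really is the directional derivative $\alpha_{\tilde h}$), and that $u^*\in W$ so that the $V^*$-pairing descends to $W^*\times W$ with no error term. Once this identification is secured, the remaining pieces — feasibility of $(0,0)$, the closure/density approximation, and the limit passage via Lipschitz dependence of the VI solution — are routine.
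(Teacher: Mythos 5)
Your proposal is correct and takes essentially the same route as the paper: both rest on the correspondence $\alpha_{\tilde h}=\theta(\beta_{i^*\tilde h})$ coming from $\mathcal{K}^{y^*}=i\mathcal{C}_{W}^{y^*}$ and the invertibility of $\Id-\Phi'(y^*)$, on the identity $\langle \tilde h,u^*\rangle_{V^*,V}=\langle i^*\tilde h,u^*\rangle_{W^*,W}$ (valid since $u^*\in W$ by \eqref{eqass3}), and on transferring the Bouligand inequality of Lemma \ref{lem:min1} to the $W$-setting. The only differences are presentational: the paper reaches \eqref{eq:compEqn} by restricting a complementarity reformulation to $\xi_h=0$, whereas you observe directly that $h=A_W\beta$ with $\beta\in\mathcal{C}_{W}^{y^*}$ forces $\beta$ to coincide with the VI solution $\beta_h$, and you make explicit the density/Lipschitz-dependence limit passage that the paper compresses into its restatement step.
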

\begin{proof}
By defining $\gamma_h := \alpha_h - \Phi'(y^*)(\alpha_h) =(\Id-\Phi'(y^*))\alpha_h,$
 the QVI \eqref{eq:QVIforAlpha} satisfied by $\alpha_h$
  can be written as 
\begin{equation*}
\gamma_h \in \mathcal{K}^{y^*} : \langle A(\Id-\Phi'(y^*))^{-1}\gamma_h  - h, \gamma_h  - \varphi \rangle \leq 0 \quad \forall \varphi \in \mathcal{K}^{y^*}.
\end{equation*}
Now if $\beta_h$ satisfies
\[\beta_h \in \mathcal{C}_{W}^{y^*} : \langle A_W\beta_h - i^*h, \beta_h - \varphi \rangle \leq 0 \quad \forall \varphi \in \mathcal{C}_{W}^{y^*},\]
we have (as discussed above) $\gamma_h = i\beta_h$, hence
\[i\beta_h = (\Id-\Phi'(y^*))\alpha_h \quad \iff \quad \alpha_h =\theta(\beta_h)
\]
Therefore,  \eqref{eq:min2} can be restated and we get (using the continuity of $\Phi'(y^*)$) that $0$ is a solution of
\begin{equation*}\label{eq:betaHVI}
\begin{aligned}
\min_{h \in \cl{i^*j\mathcal{T}_{U_{ad}}(u^*)}^{W^*}}&(\theta(\beta_h),y^*- y_d)_H + \nu \langle h, u^*\rangle_{W^*, W} \text{ s.t. }\\
&\beta_h \in \mathcal{C}_{W}^{y^*}: \langle A_{W}\beta_h   - h, \beta_h - \varphi \rangle_{W^*_{y^*}	, W} \leq 0 \quad \forall \varphi \in \mathcal{C}_{W}^{y^*};
\end{aligned}
\end{equation*}
this is well defined because $u^* \in W$ due to \eqref{eqass3}. Hence, similarly to Proposition \ref{lem:complementarityCharacterisationDerivativeA},  $(0,0,0)$ is a solution of
\begin{align*}
\min_{(\beta_h, h, \xi_h) \in W\times W^* \times W^*}&(\theta(\beta_h),y^*- y_d)_H + \nu \langle h, u^*\rangle_{W^*, W} \text{ s.t. }\\
&\begin{cases}
\beta_h \in \mathcal{C}_{W}^{y^*}\\
\xi_h = h-A_{W}\beta_h\\
\xi_h \in (\mathcal{C}_{W}^{y^*})^\circ\\
\langle \xi_h, \beta_h \rangle = 0\\
h \in \cl{i^*j\mathcal{T}_{U_{ad}}(u^*)}^{W^*}.
\end{cases}
\end{align*}
Setting $\xi_h = 0$ leads to the result.
\end{proof}
We need to derive stationarity conditions for this problem and then transform the resulting system back to the original spaces and operators. Let us remark that under the assumptions of the theorem, we have that
$\theta$ is linear and bounded.
\begin{lem}\label{lem:ZKforSS}
Defining 
\begin{align*}
D:=\cl{i^*j\mathcal{T}_{U_{ad}}(u^*)}^{W^*}, \qquad \mathcal Y := W^*\times W \times W^*, \qquad C:=(\{0\}, \mathcal{C}_{W}^{y^*}, D),
\end{align*}
there exists $(\tilde p, \tilde \lambda, \sigma) \in \mathcal Y^* \cap C^\circ$ such that
\begin{alignat*}{3}
A_{W}^*\tilde p + \theta^*(j(y^*- y_d)) + \tilde \lambda &= 0,\\
\nu u^* - \tilde p + \sigma  &= 0,\\
\tilde \lambda &\in (\mathcal{C}_{W}^{y^*})^\circ,\\
\sigma &\in D^\circ. 
\end{alignat*}
\end{lem}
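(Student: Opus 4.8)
The plan is to recognise the minimisation problem of the preceding lemma (in the form obtained \emph{after} setting $\xi_h=0$, for which $(0,0)$ is a solution) as a bounded \emph{linear} programme over a convex cone and to apply the generalised Lagrange multiplier rule exactly as in Lemma \ref{lem:RKZPenal}. First I would bundle all the constraints into one map. Writing $X:=W\times W^*$, $x=(\beta_h,h)$, set $f(x):=(\theta(\beta_h),y^*-y_d)_H+\nu\langle h,u^*\rangle_{W^*,W}$ and define $G\colon X\to\mathcal Y$ by $G(\beta_h,h):=(h-A_W\beta_h,\ \beta_h,\ h)$, so that the three feasibility requirements $h-A_W\beta_h=0$, $\beta_h\in\mathcal{C}_{W}^{y^*}$ and $h\in D$ are encoded as the single inclusion $G(x)\in C$. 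By the previous lemma $x_0=(0,0)$ solves $\min\{f(x):G(x)\in C\}$, and since $f$ and $G$ are bounded and linear we have $f'(x_0)=f$ and $G'(x_0)=G$ regardless of the base point.

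Next I would invoke the multiplier theorem \cite[Theorem 6.3]{MR2583281} (equivalently the Zowe--Kurcyusz rule \cite{MR526427}) already used in Lemma \ref{lem:RKZPenal}. Because the domain $X$ carries no further constraint, the theorem produces a multiplier $(\tilde p,\tilde\lambda,\sigma)\in C^\circ=W\times(\mathcal{C}_{W}^{y^*})^\circ\times D^\circ$ — here $\{0\}^\circ=W$ and, by reflexivity of $W$, $W^{**}=W$, which is why $\tilde p\in W$ is unconstrained — satisfying the stationarity identity $f'(x_0)=G'(x_0)^*(\tilde p,\tilde\lambda,\sigma)$, with complementarity automatic since $G(x_0)=0$. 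The hypothesis the theorem demands is the Zowe--Kurcyusz constraint qualification $G'(x_0)(X)-C=\mathcal Y$ with $0$ in the interior; eliminating the freely varying components of $G'(x_0)(X)$ this collapses to the surjectivity $A_W\mathcal{C}_{W}^{y^*}-D=W^*$. This is the main obstacle: it is \emph{not} a soft consequence of the set-up but must be established from the explicit descriptions $\mathcal{C}_{W}^{y^*}=\{v\in W: v\leq 0\text{ q.e. on }\mathcal{B}(y^*)\}$ and $D=\cl{i^*j\mathcal{T}_{U_{ad}}(u^*)}^{W^*}$, the fact that $A_W$ is an isomorphism (bounded and coercive by \eqref{ass:PhiDerivativeLipschitzConstant}, hence invertible by Lax--Milgram), and the structural conditions \eqref{eqass1}--\eqref{eqass3} on the control bounds. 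I would carry this out by adapting the capacity-theoretic arguments of \cite{Wachsmuth}, whose restrictions on $u_a,u_b$ are tailored precisely to guarantee this surjectivity.

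Finally I would extract the system by computing $G'(x_0)^*$. From the pairing $\langle G(\beta,h),(\tilde p,\tilde\lambda,\sigma)\rangle_{\mathcal Y,\mathcal Y^*}=\langle h-A_W\beta,\tilde p\rangle+\langle\tilde\lambda,\beta\rangle+\langle h,\sigma\rangle=\langle\beta,\tilde\lambda-A_W^*\tilde p\rangle+\langle h,\tilde p+\sigma\rangle$ one reads off $G'(x_0)^*(\tilde p,\tilde\lambda,\sigma)=(\tilde\lambda-A_W^*\tilde p,\ \tilde p+\sigma)\in W^*\times W$, whereas $f'(x_0)=(\theta^*(j(y^*-y_d)),\ \nu u^*)$. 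Matching the two $W^*$-components gives the adjoint equation $A_W^*\tilde p+\theta^*(j(y^*-y_d))+\tilde\lambda=0$ and matching the two $W$-components gives $\nu u^*-\tilde p+\sigma=0$, the precise signs of $\tilde\lambda$ and $\sigma$ being dictated by the polar-cone convention under which the multiplier is taken; the memberships $\tilde\lambda\in(\mathcal{C}_{W}^{y^*})^\circ$ and $\sigma\in D^\circ$ come directly from $(\tilde p,\tilde\lambda,\sigma)\in C^\circ$. This is exactly the asserted system, and the subsequent (unshown) steps would transport it back through $\theta$, $A_W$, $i$ and the inclusion $j$ to the strong stationarity conditions of Theorem \ref{thm:strongStationarity}.
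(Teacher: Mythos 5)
Your skeleton coincides with the paper's: bundle the three constraints into one affine map into $\mathcal Y$, note that $(0,0)$ solves the resulting conical programme, invoke the Zowe--Kurcyusz multiplier rule, and read off the system from the adjoint of the constraint map. Your adjoint computation and your reduction of the constraint qualification to the surjectivity statement $A_W\mathcal{C}_{W}^{y^*}-D=W^*$ are both correct, and the sign discrepancies coming from your choice $G(\beta_h,h)=(h-A_W\beta_h,\beta_h,h)$ versus the paper's $g(\beta,h)=(A_W\beta-h,\beta,h)$ are harmless, since $\tilde p$ ranges over all of $W$ (the polar of $\{0\}$) and can be replaced by $-\tilde p$.

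The genuine gap is that you never prove the surjectivity, which is the actual content of the lemma: ``adapting the capacity-theoretic arguments of \cite{Wachsmuth}'' is a plan, not an argument, and your parenthetical suggestion that invertibility of $A_W$ via Lax--Milgram is the relevant input cannot work, because $A_W\mathcal{C}_{W}^{y^*}$ is only a cone and invertibility of $A_W$ says nothing about how that cone sits relative to $D$. The paper's mechanism is constructive and uses $A_W$ through a variational inequality, not an equation: given $\tilde w\in W^*$, solve the auxiliary VI $v\in\mathcal{C}_{W}^{y^*}:\langle A_W v-\tilde w, v-\varphi\rangle\leq 0$ for all $\varphi\in\mathcal{C}_{W}^{y^*}$, which is possible by Lions--Stampacchia since $A_W$ is bounded and coercive; its complementarity system places the residual $\eta:=\tilde w-A_W v$ in $(\mathcal{C}_{W}^{y^*})^\circ$, so that $h:=-\eta$ satisfies $A_W v-h=\tilde w$ with $h\in-(\mathcal{C}_{W}^{y^*})^\circ$. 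The capacity-theoretic input (where \eqref{eqass1}--\eqref{eqass3} actually enter) appears only in the last step: the manipulations after Lemma 5.1 of \cite{Wachsmuth} give $(i^*j\mathcal{T}_{U_{ad}}(u^*))^\circ\subset-\mathcal{C}_{W}^{y^*}$, whence by the bipolar theorem $-(\mathcal{C}_{W}^{y^*})^\circ\subset(i^*j\mathcal{T}_{U_{ad}}(u^*))^{\circ\circ}=D$, so $h\in D$, and the remaining unknowns are then recovered from the linear equations. Without this VI-plus-bipolar construction, your proposal leaves the decisive step of the proof unestablished.
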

\begin{proof}
In addition to the notation introduced above, let us also define the space $\mathcal X := W \times W^*$.
Define the map $g \colon \mathcal X \to \mathcal Y$ by $g(\beta, h) := (A_{W}\beta  - h, \beta, h)$ and observe that \eqref{eq:compEqn}  can be compactly written as
\begin{align}\label{eq:finalMinProblem}
\min_{g(\beta_h, h) \in C} (\theta(\beta_h),y^*- y_d) + \nu \langle h, u^*\rangle_{W^*, W}.
\end{align}
We now proceed with checking the Zowe--Kurcyusz constraint qualification $g'((0,0))\mathcal X - \mathcal{R}_C(g(0,0)) = \mathcal Y$ to deduce the existence of Lagrange multipliers. First observe that $D$ is a convex cone which in turn implies that $C$ is a convex cone and then by \cite[Example 2.62]{MR1756264}, $\mathcal{R}_C((0,0,0)) = C$ and $\mathcal{T}_C((0,0,0))^\circ = C^\circ$. Now, we see that $g(0,0) = (0, 0, 0)$ and $\mathcal{R}_C(g(0,0)) = C$. We also have
\[g'(0,0)(\gamma, d) =  (A_{W}\gamma  - d, \gamma, d) \qquad \forall (\gamma,d) \in W\times W^*.\]
Therefore, we are required to show that for every $(w_1^*, w_2, w_3^*) \in \mathcal{Y}$, there exist $(\gamma, d,v,h) \in \mathcal X \times \mathcal{C}_{W}^{y^*} \times D$ such that
\begin{equation}\label{eq:ssSystemPrelim}
\begin{aligned}
A_{W}\gamma  - d &= w_1^*,\\
\gamma - v &= w_2,\\
d - h&= w_3^*.
\end{aligned}
\end{equation}
The first equation written in terms of $v$ and $h$ 
reads $A_{W}v  - (w_1^* + w_3^*-A_{W}w_2) = h.$ In order to force solutions to belong to the desired sets, we consider the VI 
\begin{align}
\text{find } v \in \mathcal{C}_{W}^{y^*} : \langle A_{W}v  - (w_1^* + w_3^*-A_{W}w_2 ), v - \varphi \rangle \leq 0 \quad \forall \varphi \in \mathcal{C}_{W}^{y^*}\label{eq:weirdVI}
\end{align}
associated to the above PDE. 
As explained above, \eqref{eq:weirdVI} has a solution and furthermore, the following complementarity system (which can be derived by the same arguments as before) is satisfied by any solution:
\begin{align*}
\begin{cases}
v \in \mathcal{C}_{W}^{y^*}\\
\eta := (w_1^* + w_3^*-A_{W}w_2) - A_{W}v \\
\eta \in (\mathcal{C}_{W}^{y^*})^\circ \\
\eta \perp v.
\end{cases}
\end{align*}
Using this, we see that $h :=-\eta \in -(\mathcal{C}_{W}^{y^*})^\circ$.  The manipulations in the paragraph after Lemma 5.1 of \cite{Wachsmuth}  show that $(i^*j\mathcal{T}_{U_{ad}}(y^*))^\circ \subset -\mathcal{C}_{W}^{y^*}$ which implies that $-(\mathcal{C}_{W}^{y^*})^\circ \subset (i^*j\mathcal T_{U_{ad}}(y^*))^{\circ\circ} = D$, that is, $g \in D$. Then we simply define $\gamma$ and $d$ by \eqref{eq:ssSystemPrelim}.  
Thus the constraint qualification is met for \eqref{eq:finalMinProblem}.

Writing the objective functional in \eqref{eq:finalMinProblem} as $\hat J$, we obtain the existence of a Lagrange multipler $(\tilde p, \tilde \lambda, \sigma) \in \mathcal Y^* \cap C^\circ$ such that 
\[\hat J'(0,0)(x) + \langle g'(0,0)^*(\tilde p,\tilde \lambda, \sigma), x \rangle = 0 \quad \forall x \in \mathcal X.\]
With $x=(\gamma, d)$, we see that since $\theta(0) = 0$, the first term above is
\begin{align*}
\hat J'(0,0)(x) 
&= \langle \theta^*(j(y^* - y_d)), \gamma\rangle_{W^*,W} + \nu \langle d, u^* \rangle_{W^*, W},
\end{align*}
where $\theta^*\colon V^* \to W^*$ is the adjoint of  $\theta\colon W \to V$ (this exists due to the linearity assumption). We also have, by definition of the adjoint operator,
\begin{align*}
\langle g'(0,0)^*(\tilde p,\tilde \lambda, \sigma), x \rangle 
&= \langle (\tilde p,\tilde \lambda, \sigma), (A_{W}\gamma   -d, \gamma, d) \rangle\\
&= \langle A_{W}^*\tilde p, \gamma \rangle_{W^*, W} +  \langle \tilde \lambda, \gamma \rangle_{W^*, W} + \langle \sigma-\tilde p, d \rangle_{W^*, W}. 
\end{align*}
This implies the result.
\end{proof}

We now transform all quantities back to the space $V$. 
\begin{proof}[Conclusion of sketch proof of Theorem \ref{thm:strongStationarity}]
Observe that under the assumptions, Proposition \ref{lem:characterisationOfOC}, Lemma \ref{lem:ZKforSS} and Theorem \ref{thm:dirDiff1} are applicable. To start with, let us define 
\[p^* := i\tilde p\]
and
\[\lambda^* := (\Id-\Phi'(y^*)^*)^{-1}(-A^*i\tilde p - j(y-y_d)),\]
and for convenience, denote $L := \Phi'(y^*).$  
\begin{itemize}[leftmargin=*]
\item  
By definition of $\lambda^*$ and $p^*$, we get the first line in the system after etching away the inclusion map $j$.

\item We see from the definition of $\lambda^*$ and elementary manipulations to relate it to $\tilde \lambda \in (C_W)^\circ$ and the usage of the fact that $iC_W = \mathcal{K}^{y^*}$ that $\lambda^* \in (\mathcal{K}^{y^*})^\circ$. This implies the final condition of the system thanks to \cite[Lemma  3.1]{Wachsmuth}.

\item Since $\tilde p \in W$, it vanishes q.e. on the strongly active set. As $\tilde p = \nu u^* + \sigma$ and since $\sigma \in D^\circ$, Lemma 5.1 of \cite{Wachsmuth} tells us that $\sigma \geq 0$ q.e. on $\Omega \setminus U_a$. Thus 
\[\sigma|_{\mathcal{B}(y^*)} = \sigma|_{U_a \cap \mathcal{B}(y^*)} + \sigma|_{(\Omega \setminus U_a) \cap \mathcal{B}(y^*)} \geq \sigma|_{U_a \cap \mathcal{B}(y^*)}  = 0\] with the final equality because of \eqref{eqass1}.  Note also that 
\begin{align*}
u^*|_{\mathcal B(y^*)} = u^*|_{\mathcal B(y^*) \cap U_b} + u^*|_{\mathcal B(y^*) \cap (\Omega \setminus U_b)} \geq u^*|_{\mathcal B(y^*) \cap (\Omega \setminus U_b)} \geq 0 \text{ q.e.,}
\end{align*}
with the first inequality by \eqref{eqass2} and the final inequality by the third sign condition on $u^*$ stated in \S \ref{sec:ss}. This implies  the stated condition on $p^*$, which 
is equivalent to  $-p^* \in \mathcal{K}^{y^*}$ due to the characterisation of the critical cone in \cite[Lemma 3.1]{Wachsmuth}. 
\item We obtain $\sigma \in \mathcal{N}_{U_{ad}}(u^*)$ exactly as in the proof of Theorem 5.2 in \cite{Wachsmuth}\footnote{In \cite{Wachsmuth}, the notation $\mu$ is used instead of $\sigma$.} (where $\mathcal{N}_{U_{ad}}$ denotes the normal cone to $U_{ad}$ with respect to $H$), which is the polar cone of the tangent cone, see \cite[\S 2.2.4]{MR1756264}) and this is precisely the desired inequality constraint relating the control and the adjoint.
\qedhere
\end{itemize}
\end{proof}
\bibliographystyle{abbrv}
\bibliography{QVILatestBibEvenLater}
\end{document}